\documentclass[titlepage,11pt]{article}
% SIDE MARGINS:
\oddsidemargin  0pt     %   Left margin on odd-numbered pages.
\evensidemargin 0pt     %   Left margin on even-numbered pages.
\marginparwidth 40pt    %   Width of marginal notes.
\marginparsep 10pt      % Horizontal space between outer margin and
                        % marginal note

% VERTICAL SPACING:
\topmargin 0pt           % Nominal distance from top of page to top of
                         %    box containing running head.
\headsep 10pt            %    Space between running head and text.

% DIMENSION OF TEXT:

\textheight 8.4in      %Height of text(including footnotes and figures,
                         % excluding running head and foot).
\textwidth 6.6in         % Width of text line.
\usepackage{latexsym}
\usepackage{amsfonts}
\usepackage{amsmath}
\usepackage{amsthm}
\usepackage{tikz}
\usepackage{float}
\usepackage{lmodern}
\usepackage{enumerate}
\usepackage{comment}

\usepackage{hyperref}

\usepackage{cleveref}

%\usepackage[style=numeric]{biblatex}
%\addbibresource{bibliography.bib}%<- specify bib file

%for notes on text
\usepackage{xcolor}
\usepackage[T1]{fontenc}

% LaTeX macros for proofs. From Subhash

\def\dd{\hbox{-}}

\def\cc{\hbox{-}\cdots\hbox{-}}

\def\ll{,\ldots,}
\def\cupcup{\cup\cdots\cup}

\usepackage{setspace}

\title{Detecting a long even hole}
\author{Linda Cook\\
Princeton University, Princeton, NJ 08544, USA
\\
\\
Paul Seymour\thanks{Partially supported by AFOSR grant A9550-19-1-0187 and NSF grant DMS-1800053.}\\
Princeton University, Princeton, NJ 08544, USA}

\date{August 28, 2019; revised \today}
\newtheorem{theorem}{Theorem}[section]

\newtheorem{lem}[theorem]{Lemma}

\begin{document}
\maketitle

\begin{abstract}
For each integer $\ell \geq 4$, we give a polynomial-time algorithm to test whether a graph contains an induced cycle with length at least $\ell$ and even.
\end{abstract}

\section{Introduction}
All graphs in this paper are finite and have no loops or parallel edges. A {\em hole} in a graph is an induced subgraph which is a cycle of length
at least four. The {\em length} of a path or cycle $A$ is the number of edges in $A$ and the {\em parity} of $A$ is the parity of its length.
For graphs $G, H$ we will say that $G$ {\em contains} $H$ if some induced subgraph of $G$ is isomorphic to $H$. 
We say $G$ is {\em even-hole-free} if $G$ does not contain an even hole. 
We denote by $|G|$ the number of vertices of a graph $G$.
A graph algorithm is {\em polynomial-time} if its running time is at most polynomial in $|G|$.

This paper concerns detecting holes in a graph with length satisfying certain conditions. It is of course trivial
to test for the existence of a  
hole of length at least $\ell$, in polynomial time for each
constant $\ell\ge 4$, as follows. We enumerate all induced paths $P$ of length $\ell-2$. For each choice of $P$, let its ends 
be $x$ and $y$, let $P^*= V(P) \setminus \{x,y\}$, and let $N$ be the set of vertices different from $x,y$ that belong 
to or have a neighbour in $P^*$.
Then we check whether $x$ 
and $y$ are in the same component of $G \setminus N$. This depends on $\ell$ being fixed; if $\ell$ is part of the input, then the problem is NP-complete, because it contains the hamilton cycle problem.

But the problem is much less trivial if we impose restrictions on the parity of the hole length, or more generally on 
its residue class modulo some fixed number.
Sepehr Hajebi~\cite{w1hard} provided a proof in private communication that if $\ell$ is part of the input, then detecting holes 
of length at least $\ell$ in a specific residue class is W[1]-hard, and thus not
fixed-parameter tractable unless the central conjecture of parameterized complexity theory is false. More exactly, for all
integers $m,r$ with $m\ge 2$ and $0\le r<m$,
if there is
an algorithm that, with input $G, \ell$, determines  in time $\mathcal{O}(f(\ell)p(|G|))$ whether $G$ contains a hole $C$ of length at length at least $\ell$ and
with $|E(C)| \cong r \mod m$, where $f$ is some computable function and $p$ is a polynomial, then the central conjecture of 
parameterized complexity theory would be false. But this is different from what we are doing in this paper: we are working with 
$\ell$ fixed, and Hajebi wants $\ell$ part of the input.

Here is an overview of positive results about algorithms to detect even and odd holes, with odd holes first:
\begin{itemize}
\item In 2005, Chudnovsky, Cornu\'{e}jols, Liu, Seymour and Vu\v{s}kovi\'{c}~\cite{bergetest} gave an $\mathcal{O}(|G|^9)$ algorithm 
to test whether a graph $G$ or its complement has an odd hole.
\item In 2019, Chudnovsky, Scott, Seymour, and Spirkl~\cite{chudnovsky2020oddhole} gave an algorithm to detect an odd hole in $G$ in time 
$\mathcal{O}(|G|^9)$; and Lai, Lu and Thorup~\cite{lai2019threeinatree} improved this running time to $\mathcal{O}(|G|^8)$.
\item Also in 2019, Chudnovsky, Scott and Seymour~\cite{chudnovsky2019detectinglongodd} gave a $\mathcal{O}(|G|^{20\ell + 40})$ algorithm to test whether $G$ contains an
odd hole of length at least $\ell$, where $\ell$ is any fixed number.
\item In 2020, Chudnovsky, Scott and Seymour~\cite{chudnovsky2020shortestodd} gave an $\mathcal{O}(|G|^{14})$ algorithm that 
finds a shortest odd hole in $G$ (if there is one) in time $\mathcal{O}(|G|^{14})$.
\end{itemize}

For even holes the story is a little different:
\begin{itemize}
    \item In 2002, Conforti, Cornu\'{e}jols, Kapoor and Vu\v{s}kov\'{i}c~\cite{conforti2002even} gave an approximately 
$\mathcal{O}(|G|^{40})$ algorithm to test whether a graph contains an even hole, by using a structure theorem about 
even-hole-free graphs from an earlier paper~\cite{conforti2002evenstructure}.
    \item In 2003, Chudnovsky, Kawarabayashi, and Seymour~\cite{chudnovsky2005detectingeven} provided a simpler algorithm that searches for even holes directly in $\mathcal{O}(|G|^{31})$.
    \item In 2015, Chang and Lu~\cite{chang} gave an $\mathcal{O}(|G|^{11})$ algorithm to determine whether a graph contains an even hole; and 
Lai, Lu and Thorup~\cite{lai2019threeinatree} improved this running time to $\mathcal{O}(|G|^9)$ in 2020.
\end{itemize}
So, some analogues of what has been done for odd holes are still open for even holes. 
The problem of detecting long even holes is one of them, and is what is solved in this paper. Another is finding a shortest even hole
if there is one; this we have not yet been able to solve.

We remark that even versus odd has been almost the entire focus of previous research, but what about holes of length 
a multiple of three, can we detect them in polynomial time? Triangle-free graphs with no holes of length a multiple of three
have some very interesting properties~\cite{ternary}, but we currently have no idea how to recognize such graphs.

Since we are looking for even holes of length at least $\ell$, we might as well assume that $\ell$ is even.
Our main result is the following:

\begin{theorem}\label{mainthm}
For each even integer $\ell \geq 4$, there is an algorithm with the following specifications:
\begin{description}
\item [Input:] A graph $G$.
\item [Output:] Decides whether $G$ has an even hole of length at least $\ell$.
\item [Running time:] $\mathcal{O}(|G|^{9\ell+3})$.
\end{description}
\end{theorem}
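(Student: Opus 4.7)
The plan is to adapt the ``enumerate-and-connect'' framework of~\cite{chudnovsky2019detectinglongodd} for long odd holes to the even case. The naive approach---enumerate an induced path $P$ of length $\ell-2$ and check whether its endpoints are joined through $G\setminus N[V(P)\setminus\{\text{ends of }P\}]$---already detects a hole of length at least $\ell$, but it gives no control over the parity of the completing path, nor over chords crossing between $P$ and that path. So I would guess a larger seed inside a putative long even hole.

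Concretely, I would enumerate ordered tuples $S$ of roughly $9\ell$ vertices, to be interpreted as a prescribed subset of the vertices of a hypothetical even hole $C$ of length at least $\ell$ appearing in a prescribed cyclic order on $C$, together with some auxiliary ``detector'' vertices from $V(G)\setminus V(C)$. The remainder of $C$ then breaks into a bounded number of arcs between consecutive chosen vertices of $C$. For each such arc both endpoints are fixed, and I would try to reconstruct the arc as a \emph{shortest} induced path in an auxiliary subgraph of $G$ obtained by deleting all vertices already placed and all vertices with a ``forbidden'' adjacency pattern to $S$. Since shortest paths in any graph are automatically induced, each reconstruction reduces to an ordinary breadth-first search in an induced subgraph. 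The parity of each arc is forced once we have fixed enough vertices of $C$, because the parities of the arcs around $C$ must sum to $0 \bmod 2$; so at that point the only parity information required per arc is a minimum length, not a residue.

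The main obstacle I foresee is establishing that $O(\ell)$ seed vertices really do suffice. This demands a structural argument that, for every long even hole $C$ in $G$, one can select a bounded $S$ witnessing that each remaining arc of $C$ is canonically obtainable as a shortest induced path of specified minimum length inside a specified induced subgraph of $G$. The delicate step is \emph{cleaning}: the auxiliary detectors in $S$ must rule out every vertex of $G\setminus V(C)$ whose presence would either shorten a completing arc or introduce a chord to $C$. In the long odd hole argument this is accomplished by carefully chosen detector vertices whose adjacencies to $C$ are prescribed, and I expect the even case to need only moderate modification, with the detectors additionally tracking which side of each arc they lie on. Once this structural guarantee is in hand the complexity bookkeeping is routine: there are $|G|^{O(\ell)}$ seeds; for each seed the reconstructions on pruned subgraphs amount to $\mathcal{O}(|G|^3)$ work (a single-source BFS from each candidate endpoint), yielding the advertised $\mathcal{O}(|G|^{9\ell+3})$ bound.
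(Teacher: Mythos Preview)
Your proposal has a genuine gap: it treats the even case as a ``moderate modification'' of the long-odd-hole algorithm of~\cite{chudnovsky2019detectinglongodd}, but the paper's proof shows this is precisely where the difficulty lies. The no-shortcut property you need for shortest-path reconstruction of arcs (your key step) is Theorem~\ref{thm:shortcuts} here, and its proof requires that $G$ already be a \emph{candidate}---in particular, that $G$ contain no long near-prism and no long ban-the-bomb. In the odd case the analogous obstructions (prisms, jewels) immediately yield an odd hole and so can be disposed of by a short argument; in the even case a long near-prism does yield a long even hole, but \emph{detecting} one is NP-complete in general and becomes polynomial only after the other easily-detectable configurations have been excluded. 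The paper devotes Sections~\ref{sec:sketch}--\ref{sec:kmajor} to this, building a separate cleaning theory for lightest long near-prisms (ordered frames, $(K,\mathcal{F})$-contrivances, Lemmas~\ref{prismjump}--\ref{firstpath}), and this step alone is what produces the $\mathcal{O}(|G|^{9\ell+3})$ running time. Once $G$ is a candidate, the actual long-even-hole detection (Theorems~\ref{thm:detectingCleanSLEH} and~\ref{alg:cleaningSLEH}) costs only $\mathcal{O}(|G|^{4\ell+3})$.

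So your bookkeeping mis-attributes the $9\ell$ exponent: it does not come from a $9\ell$-vertex seed on the target hole $C$, but from the frame-plus-contrivance enumeration needed to clean a lightest long near-prism. Without that preprocessing, your ``detector'' vertices cannot rule out the bad configurations that break shortcut-freeness, and the shortest-path reconstruction of arcs may return paths of the wrong length (hence wrong parity) or with chords to the rest of $C$. The missing idea is exactly the near-prism cleaning machinery, which is the bulk of what is new in this paper.
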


Our algorithm combines approaches described in \cite{chudnovsky2005detectingeven} and \cite{chudnovsky2019detectinglongodd}. 
The algorithm uses a technique called ``cleaning'', as do the algorithms of \cite{chudnovsky2005detectingeven},
\cite{chudnovsky2019detectinglongodd} and many other algorithms to detect induced subgraphs.

Here is an outline of the method. The result is clear if $\ell=4$, so we might as well assume that $\ell\ge 6$; and throughout 
the paper $\ell \geq 6$ is a fixed even integer, and a {\em long} hole or path is a hole or 
path of length at least $\ell$.
A {\em shortest} long even hole is a long even hole of minimum length.
If $C$ is a hole in $G$, a vertex $v$ of $V(G) \setminus V(C)$ is {\em $C$-major} if there is no three-vertex path of $C$
containing all neighbours of $v$ in $V(C)$. A hole $C$ is {\em clean} if it has no $C$-major vertex.

\begin{itemize}
\item First, we test for the presence in the input graph $G$ of certain kinds of induced subgraphs (``short'' long even holes, 
``long jewels of bounded order'', ``long thetas'', `` long ban-the-bombs'', ``long near-prisms'') that are detectable in 
polynomial time (sometimes, under the assumption that earlier graphs in this list are not present) and whose presence would 
imply that $G$ contains a long even hole. We call these kinds of subgraphs ``easily-detectable configurations.'' We may assume these 
tests are unsuccessful.

\item Second, we generate a {\em cleaning list}, a list of polynomially many subsets of $V(G)$ such that if $C$ is a shortest long even 
hole in $G$, then for some set $X$ in the list, $X$ contains every $C$-major vertex and no vertex of $C$. This process depends on 
the absence of easily-detectable configurations.
\item
Third, for every $X$ in our cleaning list we check whether $G \setminus X$ contains a clean shortest long even hole. This also 
depends 
on the absence of easily-detectable configurations.
We detect a clean shortest long even hole $C$ by guessing three evenly-spaced vertices along $C$ and taking shortest paths 
between them.

\end{itemize}

We are calling long near-prisms easily-detectable configurations, 
%because they are detectable in polynomial time in graphs without long thetas as an induced subgraph, 
but ``easily'' might be a misnomer, because this is by far the 
computationally most expensive step of the algorithm, and the bulk of what is novel in the paper. 
For a general graph $G$, deciding whether $G$ 
contains a long near-prism 
is NP-complete;  Maffray and Trotignon's proof \cite{maffray2005algorithms} that deciding whether $G$ contains a prism is 
NP-complete can easily be adjusted to prove that deciding whether $G$ contains a long near-prism is NP-complete. Fortunately
it really is easy to detect the other ``easily detectable'' configurations, so we can assume there are none; and in such graphs we can detect the presence of long near-prisms in polynomial time.

The approach of determining whether $G$ contains an even hole by first testing whether $G$ contains a theta of a prism was outlined 
in \cite{chudnovsky2005detectingeven}. Moreover, Chudnovsky and Kapadia gave an algorithm to decide whether $G$ contains a theta or 
a prism in \cite{chudnovsky2008thetaprism}. Their algorithm does not translate directly to long theta and long near-prism detection, but 
we were able to use a similar algorithmic structure for our purposes.

\section{The easily-detectable configurations}
The {\em interior} $P^*$ of a path $P$ is the set of vertices of $P$ that are not ends of $P$.
Thus $P^* = \emptyset$ for a path $P$ of length at most one. If $X, Y \subseteq V(G)$, we say $X$ is 
{\em anticomplete} to $Y$ if $X\cap Y=\emptyset$ and no vertex in $X$ is adjacent to a vertex in $Y$. 
We begin with a test for what we called ``short'' long even holes:
\begin{theorem} \label{alg:shortlongevenholes}
For each integer $k \geq \ell$, there is an algorithm with the following specifications:
\begin{description}
\item[Input:] A graph $G$.
\item[Output:] Decides whether $G$ has a long even hole of length at most $k$.
\item[Running time:] $\mathcal{O}(|G|^k)$.
\end{description}
\end{theorem}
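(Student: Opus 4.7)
My plan is to solve this by direct brute-force enumeration. Since $k$ is a fixed constant and the desired hole has at most $k$ vertices, we can afford to examine every candidate $n$-tuple of vertices for each $n\le k$. Specifically, for each even integer $n$ with $\ell\le n\le k$, the algorithm iterates through all ordered tuples $(v_1,v_2,\ldots,v_n)$ of distinct vertices of $G$, and for each tests whether the edges $v_1v_2,v_2v_3,\ldots,v_{n-1}v_n,v_nv_1$ are all present in $G$ and no other pair $v_iv_j$ is an edge. If so, the tuple spells out a long even hole of length $n\le k$ and we answer ``yes''; if no such tuple is found for any admissible $n$, we answer ``no''. Correctness is immediate, since reading off the vertices of any such hole in cyclic order produces a tuple that the enumeration encounters.

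For the running time, there are at most $|G|^n$ ordered $n$-tuples for each $n$, and each tuple can be tested with $O(n^2)$ adjacency queries; summing $O(n^2\,|G|^n)$ over even $n\in\{\ell,\ell+2,\ldots,k\}$ gives $O(k^2\,|G|^k)$, which is $\mathcal{O}(|G|^k)$ since $k$ is fixed. One could instead enumerate induced paths of length $n-1$ and then check whether the endpoints are adjacent, or grow paths incrementally via depth-first search, but since $k$ is constant these refinements do not change the asymptotic bound.

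There is no substantive difficulty in this lemma: its purpose is to isolate the ``short'' long even holes so that the cleaning machinery developed later in the paper need only handle even holes of length strictly greater than $k$. The only thing to double-check is that the per-tuple overhead and the number of admissible lengths $n\le k$ together contribute only constant factors once $k$ is fixed, which they do.
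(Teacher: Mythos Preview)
Your proposal is correct and is essentially the same brute-force enumeration as the paper's proof, which simply enumerates all vertex sets of size $\ell,\ell+1,\ldots,k$ and checks whether each induces an even hole. The only cosmetic difference is that you enumerate ordered tuples (and restrict to even $n$) rather than unordered sets, which does not affect the argument or the $\mathcal{O}(|G|^k)$ bound.
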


\begin{proof}
We enumerate all vertex sets of size $\ell, \ell+1, \dots, k$ and for each one, check whether it induces an even hole.
\end{proof}

We need the following easily-detectable configuration of \cite{chudnovsky2019detectinglongodd} (slightly modified).
Let $u,v \in V(G)$ and let $Q_1, Q_2$ be induced paths between $u,v$ of different parity. Let $P$ be an induced path between 
$u,v$ of length at least $\ell-\min(|E(Q_1)|,|E(Q_2)|)$, such that $P^*$ is anticomplete to $Q_1^*\cup Q_2^*$. We say the subgraph induced 
on $V(P \cup Q_1 \cup Q_2)$ is a {\em long jewel of order $\max{(|V(Q_1)|, |V(Q_2)|)}$ formed by $Q_1, Q_2, P$}. Any graph 
containing a long jewel has a long even hole, since the holes $P\cup Q_1$ and $P \cup Q_2$ are both long holes and one of them is even.

We need a slight extension of Theorem 2.2 of \cite{chudnovsky2019detectinglongodd}:

\begin{theorem} \label{alg:longjewels}
There is an algorithm with the following specifications.
\begin{description}
\item[Input:] A graph $G$ and an integer $k \geq 0$.
\item[Output:] Decides whether $G$ has a long jewel of order at most $k$.
\item[Running time:] $\mathcal{O}(|G|^{n})$ where $n=k+1+\max(k,\ell-1)$.
\end{description}
\end{theorem}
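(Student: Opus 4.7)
The plan is to enumerate the two short paths $Q_1, Q_2$ of a potential jewel explicitly, and then to complete the long path $P$ by guessing a bounded-length initial segment and closing it off with a shortest-path computation.

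First, for each pair of integers $a, b \in \{1, \dots, k-1\}$ of different parity, I would enumerate all ordered vertex tuples $(u = x_0, x_1, \dots, x_a = v)$ and $(u = y_0, y_1, \dots, y_b = v)$, retaining those for which each tuple induces a path of $G$. Since $Q_1$ and $Q_2$ share the endpoints $u, v$, this enumeration uses $\mathcal{O}(|G|^{a+b})$ time. For each surviving pair, set $m = \min(a,b)$ and $L = \max(\ell - m, 0)$; it then remains to decide whether $G$ contains an induced $u$-$v$-path $P$ of length at least $L$ whose interior is anticomplete to $Q_1^* \cup Q_2^*$.

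Let $W$ be the union of $Q_1^* \cup Q_2^*$ with all of its neighbours in $V(G) \setminus \{u,v\}$, and set $H = G \setminus W$. Any valid $P$ is an induced $u$-$v$-path of $H$. If $L \le 1$, it suffices to test whether $u$ and $v$ lie in a common component of $H$. Otherwise, guess an ordered tuple $(p_1, \dots, p_{L-1})$ of vertices of $H$ ($|G|^{L-1}$ choices), check that $u, p_1, \dots, p_{L-1}$ induces a path of $H$ and that $v$ has no neighbour in $\{u, p_1, \dots, p_{L-2}\}$, and then compute a shortest induced path from $p_{L-1}$ to $v$ in the subgraph of $H$ obtained by deleting $\{u, p_1, \dots, p_{L-2}\}$ together with all their other neighbours (excluding $p_{L-1}$ and $v$). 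A shortest path in such a subgraph is automatically induced, and the deletions prevent any chord from forming across the seam, so the concatenation with the guessed prefix is an induced $u$-$v$-path of length at least $L$ whose interior avoids $W$; conversely, any valid $P$ arises in this way from its first $L$ vertices.

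The total enumeration cost is $\mathcal{O}(|G|^{(a+b) + (L-1)}) = \mathcal{O}(|G|^{\max(a,b) + \ell - 1})$, which is at most $\mathcal{O}(|G|^{k + \ell - 2})$, and the exponent $k + \ell - 2$ is bounded by $n = k + 1 + \max(k, \ell - 1)$ in both regimes $k \ge \ell - 1$ and $k < \ell - 1$, yielding the required running time. I expect the main delicate point to be the precise choice of deletions in the final step, so that no chord can form where the guessed prefix meets the shortest-path segment and so that $p_{L-1}$ and $v$ themselves remain in the search graph; this is bookkeeping rather than a substantive obstacle, and is essentially the mild extension of the corresponding argument in \cite{chudnovsky2019detectinglongodd}.
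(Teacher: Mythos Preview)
Your proposal is correct and follows essentially the same approach as the paper: enumerate $Q_1,Q_2$ together with a bounded-length initial segment of $P$ starting at $u$, then test connectivity to $v$ in the graph obtained by deleting the guessed vertices and their neighbours. The only cosmetic differences are that the paper guesses a prefix of length $\ell-\min(|E(Q_1)|,|E(Q_2)|)-2$ (one less than yours) and checks mere reachability rather than computing a shortest path, and that the paper does not separate the running-time analysis into the two regimes $k\gtrless \ell-1$.
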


\begin{proof}
We enumerate all triples of induced paths $Q_1,Q_2,R$ of $G$, such that:
\begin{itemize}
\item $Q_1,Q_2$ join the same pair of vertices, say $u,v$;
\item one of $Q_1,Q_2$ is odd and the other is even, and each has at most $k$ vertices;
\item $R$ has length $\ell-\min(|E(Q_1)|,|E(Q_2)|)-2$ (or zero if this number is negative), 
and has one end $u$ and the other some vertex $w$ say;
\item no vertex of $V(R)\setminus \{u\}$ equals or has a neighbour in $V(Q_1\cup Q_2)\setminus \{u\}$.
\end{itemize}
For each such triple of paths, let $X$ be the set of vertices of $G$ that are different from and nonadjacent to each vertex of
$V(Q_1\cup Q_2\cup R)\setminus \{v,w\}$. We test whether there is a path in $G[X\cup\{w,v\}]$ between $w,v$. If so we output that $G$ contains
a long jewel of order at most $k$. If no triple yields this outcome, we output that $G$ has no such long jewel.

To see the correctness of the algorithm, certainly the output is correct if $G$ contains no long jewel of order at most $k$. Suppose then it does, say
formed by $Q_1,Q_2,P$. Let $u,v$ be the ends of $P$, and let $R$ be the subpath of $P$ of length  $\ell-\min(|E(Q_1)|,|E(Q_2)|)-2$ (or zero if this number is negative) with one end $u$. When the algorithm
tests the triple $Q_1,Q_2,R$, it will discover there is a path in $G[X\cup \{w,v\}]$ between $w,v$, because the remainder of $P$ is such a path. Consequently
the output is correct.

The running time is $O(|G|^2)$ for each triple of paths, and there are at most $|G|^{n}$ such triples where $n = k-1+\max(k,\ell-1)$, 
so the running time is as claimed.
This proves Theorem ~\ref{alg:longjewels}.
\end{proof}

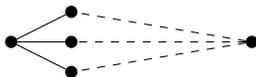
\begin{figure}[H]
\centering

\begin{tikzpicture}[scale=.8,auto=left]
\tikzstyle{every node}=[inner sep=1.5pt, fill=black,circle,draw]
\node (z) at (0,0) {};
\node (a) at (1,.5) {};
\node (b) at (1,0) {};
\node (c) at (1,-.5) {};
\node (d) at (4,0) {};

\foreach \from/\to in {z/a,z/b, z/c}
\draw [-] (\from) -- (\to);
\foreach \from/\to in {d/a,d/b, d/c}
\draw [dashed] (\from) -- (\to);

\end{tikzpicture}

\caption{A theta (dashed lines mean paths of arbitrary positive length)} \label{theta}
\end{figure}

A {\em theta} is a graph consisting of two non-adjacent vertices $u,v$ and three paths $P_1, P_2, P_3$ joining $u,v$ with pairwise 
disjoint interiors, and we say $P_1, P_2, P_3$ {\em form} a theta. The union of any two of $P_1,P_2,P_3$ is a hole, and
a {\em long theta} is a theta where all three holes are long.
If $G$ contains a long theta, then it contains a long even hole, because 
at least two of $P_1, P_2, P_3$ must have the same parity. To detect long thetas, 
we use the ``three-in-a-tree'' algorithm of~\cite{chudnovsky2010three}, the following:

\begin{theorem} \label{alg:threeinatree}
There is an algorithm with the following specifications:
\begin{description}
\item[Input:] A graph $G$ and three vertices $v_1,v_2,v_3$ of $G$.
\item[Output:] Decides whether there is an induced subgraph $T$ of $G$ with $v_1,v_2,v_3 \in V(T)$ such that $T$ is a tree.
\item[Running time:] $\mathcal{O}(|G|^4)$.
\end{description} 
\end{theorem}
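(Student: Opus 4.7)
The plan is to characterize the structure of a minimal induced subtree $T$ of $G$ containing $v_1,v_2,v_3$. In such a minimal $T$, every leaf must be one of $v_1,v_2,v_3$, so $T$ has at most three leaves and is therefore either (i) an induced path through $v_1,v_2,v_3$ in some order, or (ii) a subdivision of $K_{1,3}$ with the $v_i$ as leaves and some branch vertex $b$. Case (i) reduces to a bounded number of instances of the two-pairs-disjoint-induced-paths problem with prescribed middle vertex: for each choice of which $v_i$ lies in the middle, I test whether there are induced paths from that $v_i$ to each of the other two whose union is induced, which can be done by enumerating neighbours of $v_i$ and running connectivity in an appropriate subgraph.

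The hard part is case (ii). For every candidate branch vertex $b$ one would need three induced paths $B_1,B_2,B_3$ from $b$ to $v_1,v_2,v_3$ respectively that are pairwise internally disjoint \emph{and} have no edges between distinct interiors. This joint anticompleteness condition is not local, so it cannot be decided by naive shortest-path computations; this is the principal obstacle. My approach would be to reduce to a simpler base case by a sequence of \emph{safe reductions} that preserve the existence of such a subtree. Candidate reductions include deleting a vertex $u$ that has no induced path to some $v_i$ in $G\setminus (N(u)\setminus\{v_i\})$; deleting a vertex dominated by another with respect to the three terminals; and decomposing along clique cutsets that separate the $v_i$ from one another so each side can be analyzed independently.

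The main obstacle is proving that the list of reductions is rich enough that, once none applies, the residual graph is structurally simple enough (for instance, has no nontrivial clique cutset separating the terminals and has a bounded ``skeleton'') for a direct combinatorial check to decide the instance. Showing that each proposed reduction is genuinely safe appears to be the most delicate part: one must verify that no minimal induced subtree through $v_1,v_2,v_3$ uses the deleted or contracted vertex in an essential way. Achieving the $\mathcal{O}(|G|^4)$ bound will further demand that the total number of reduction steps is polynomially bounded and that each step, together with the final base-case check, fits within the time budget; I expect this to be tight and to require careful amortization, which is why, in this paper, we invoke the result as a black box rather than reprove it.
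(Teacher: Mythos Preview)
The paper does not prove this theorem at all; it is quoted from Chudnovsky and Seymour~\cite{chudnovsky2010three} and used as a black box, exactly as you say in your final sentence. So there is no ``paper's own proof'' to compare your sketch against.

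For what it is worth, the actual proof in~\cite{chudnovsky2010three} does not proceed by the kind of safe-reduction/clique-cutset scheme you outline. It is instead based on a structural characterization: three terminals fail to lie in a common induced tree if and only if the graph admits a certain ``strip structure'' relative to those terminals, and the algorithm works by building and verifying this structure. Your case~(i) versus case~(ii) dichotomy is fine as a description of what a minimal solution looks like, but the joint-anticompleteness obstacle you identify in case~(ii) is precisely what makes the problem hard, and there is no known way to resolve it by local reductions of the type you propose; the structural theorem is what replaces that. Your closing remark---that the result is invoked here rather than reproved---is the correct takeaway.
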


Chudnovsky and Seymour's algorithm in \cite{chudnovsky2010three} to detect a theta in a graph $G$ can be adjusted to detect a long theta, as follows:

\begin{theorem}\label{alg:longtheta}
There is an algorithm with the following specifications:
\begin{description}
\item[Input:] A graph $G$.
\item[Output:] Decides whether $G$ contains a long theta.
\item[Running time:] $\mathcal{O}(|G|^{2\ell-1})$.
\end{description}
\end{theorem}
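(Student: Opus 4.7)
The plan is to adapt the theta-detection algorithm of Chudnovsky and Seymour~\cite{chudnovsky2010three}, which reduces theta detection to the three-in-a-tree algorithm (our \cref{alg:threeinatree}): one guesses an endpoint $u$ of the theta and its three neighbours $a_1,a_2,a_3$ on the three paths, then calls three-in-a-tree on a carefully constructed auxiliary graph to detect the remainder. For the long-theta variant, the key structural observation is that in a long theta with paths $P_1, P_2, P_3$, the condition $|E(P_i)|+|E(P_j)|\ge \ell$ for all pairs $i\ne j$ forces at most one of the three paths to have length strictly less than $\ell/2$. So if, instead of merely guessing the three neighbours $a_i$ of $u$, we guess an initial segment $Q_i \subseteq P_i$ of length roughly $\ell/2$ starting at $u$ along each path, then any completion of $Q_1, Q_2, Q_3$ into a theta is automatically long.

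Accordingly, the algorithm enumerates every vertex $u$ and every triple of induced paths $Q_1, Q_2, Q_3$ sharing only the endpoint $u$, with pairwise disjoint and pairwise anticomplete interiors, each of the chosen length; for each such triple it applies three-in-a-tree to an auxiliary graph $G'$ obtained from $G$ by deleting $u$, the interiors of the $Q_i$, and any further vertex whose adjacencies to $V(Q_1 \cup Q_2 \cup Q_3)$ would create a shortcut or block a legitimate extension. Passing the far endpoints $w_1, w_2, w_3$ of the $Q_i$ as the three distinguished vertices, a positive answer from three-in-a-tree corresponds, as in~\cite{chudnovsky2010three}, to three induced paths joining $w_1, w_2, w_3$ to a common vertex $v$, which together with $Q_1, Q_2, Q_3$ give a long theta in $G$; conversely, any long theta in $G$ is discovered when the algorithm enumerates the correct $u$ and the correct initial segments of the three theta-paths.

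The main technical obstacle is getting the auxiliary graph $G'$ exactly right, following \cite{chudnovsky2010three}: one must carefully delete every vertex whose adjacencies to the guessed segments would prevent a valid extension (for example, a vertex adjacent to an interior vertex of some $Q_i$ other than $w_i$ could short-circuit $P_i$, and vertices with ``bad'' joint adjacencies to several $Q_i$'s could collapse two paths into one). Once $G'$ is correctly constructed the correctness argument follows the same outline as in the original theta-detection algorithm. For the running time, enumerating $u$ and the three $Q_i$'s contributes a factor of $|G|^{O(\ell)}$, since each $Q_i$ has $O(\ell)$ vertices and they share only $u$, and each three-in-a-tree call contributes $\mathcal{O}(|G|^4)$; with the lengths of the $Q_i$ chosen as tightly as the long-theta length condition permits, this yields the claimed $\mathcal{O}(|G|^{2\ell-1})$ bound.
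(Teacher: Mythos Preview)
Your approach is essentially the paper's, but there is one genuine gap and one unnecessary complication.

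\textbf{The gap.} You observe correctly that at most one of $P_1,P_2,P_3$ can have length $<\ell/2$, and then propose to guess segments $Q_i$ ``of length roughly $\ell/2$''. But if one path --- say $P_3$ --- really does have length $<\ell/2$ (it can be as short as $2$), you cannot guess an initial segment of length $\ell/2-1$ along it. You never say what to do in this case, and without handling it the algorithm misses long thetas with one short path. The paper's fix is to allow variable arm lengths: a \emph{claw} is a tree with centre $a$ and three arms of lengths $k_1,k_2,k_3\ge 2$ satisfying $k_i+k_j\ge \ell-2$ for all $i\ne j$ and $k_1+k_2+k_3\le 2\ell-6$. When all three theta paths are long you take $k_i=\ell/2-1$; when $P_3$ is short you take $Q_3=P_3$ entirely and compensate by taking $Q_1,Q_2$ of length $\ell-2-|E(P_3)|$. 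The condition $k_i+k_j\ge \ell-2$ is exactly what makes ``any theta containing an induced claw is long'' true (using that the three leaf vertices $q_1,q_2,q_3$ are pairwise nonadjacent since the claw is induced, so the tree found by three-in-a-tree adds at least $2$ to each pair of path lengths). Note that this short-path case is precisely what drives the exponent: the claw has at most $2\ell-5$ vertices (achieved when one arm has length $2$), giving $\mathcal{O}(|G|^{2\ell-5})$ claws and hence the stated $\mathcal{O}(|G|^{2\ell-1})$ bound. Your fixed-length version would give a smaller exponent but would be incorrect.

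\textbf{The overcomplication.} You describe the construction of $G'$ as ``the main technical obstacle'', requiring careful deletion of vertices with ``bad joint adjacencies to several $Q_i$'s'' and so on. In fact it is trivial: once you enumerate an \emph{induced} claw $B$ with leaves $q_1,q_2,q_3$, you simply delete every vertex other than $q_1,q_2,q_3$ that belongs to or has a neighbour in $V(B)\setminus\{q_1,q_2,q_3\}$. Because $B$ is induced, this one-line construction already guarantees that any induced tree in $G'$ through $q_1,q_2,q_3$ combines with $B$ to form a theta; there is no further case analysis.
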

\begin{proof}
The algorithm is as follows. Say (temporarily) a {\em claw} is a graph that is the union of three paths $Q_1,Q_2,Q_3$, with
a common end $a$ and otherwise vertex-disjoint, of lengths $k_1,k_2,k_3$ respectively where $k_1,k_2,k_3\ge 2$, and 
$k_1+k_2, k_2+k_3, k_3+k_1\ge \ell-2$, and $k_1+k_2+k_3\le 2\ell-6$. If three paths $P_1,P_2,P_3$ of $G$ form a long theta, 
then $P_1\cup P_2\cup P_3$ includes 
a claw which is an induced subgraph of $G$. (To see this, if $P_1,P_2,P_3$ all have length at least $\ell/2$ take $Q_1,Q_2,Q_3$
all of length $\ell/2-1$, and if say $P_3$ has length less than $\ell/2$, take $Q_3=P_3$ and $Q_1,Q_2$ of length $\ell-2-|E(P_3)|$.) 
Conversely, if three paths form a theta that includes a claw, then the theta is long.

Let $B$ be a claw in $G$, and let $q_1,q_2,q_3$ be its three vertices of degree one in $B$. Let
$G'$ be the graph obtained from $G$ by deleting all vertices different from $q_1,q_2,q_3$ that belong to or have a neighbour in 
$V(B)\setminus \{q_1,q_2,q_3\}$. Then 
$B$ is an induced subgraph of a theta (and hence of a long theta) in $G$ if and only if there is an induced tree $T$ containing $q_1,q_2,q_3$
in $G'$.

So the algorithm is: enumerate all induced claws, and for each one, check if there is an induced tree as above. Since claws have 
at most $2\ell-5$ vertices, there are only $\mathcal{O}(|G|^{2\ell-5})$ of them, so the running time is $\mathcal{O}(|G|^{2\ell-1})$.
This proves Theorem \ref{alg:longtheta}.
\end{proof}

Lai, Lu and Thorup~\cite{lai2019threeinatree} provide a faster algorithm for the three-in-a-tree problem. Using their 
$\mathcal{O}(|E(G)|(\log |G|)^2)$ algorithm we can reduce the running time for detecting a long theta to 
$\mathcal{O}(|G|^{2\ell-3}(\log|G|)^2)$, but this improvement does not affect the asymptotic running time of our long even holes 
detection algorithm.

For brevity, it is convenient to describe enumerating all subgraphs of a certain type as ``guessing'' subgraphs of that 
type. In this language the algorithm can be written as follows: We guess the paths $Q_1$, $Q_2$ and $Q_3$ 
and test whether $q_1, q_2, q_3$ are contained in some induced tree of $G'$.

We call a path $P$ with ends $x, y$ an {\em xy-path}.
If $P$ is a path, and $x,y\in V(P)$, we denote the subpath of $P$ with ends $x,y$ by $x\dd P\dd y$. A path with vertices $v_1\ll v_k$
in order is denoted by $v_1\cc v_k$. If $P,Q$ are paths with ends $u,v$ and $v,w$ respectively, and their union is a path with ends $u,w$, we denote
this path by $u\dd P\dd v\dd Q\dd w$; and extend this notation for longer concatenations similarly.

Let us say a {\em ban-the-bomb} is a graph consisting of 
\begin{itemize}
\item a cycle $u\dd v_1\dd w\dd v_2\dd u$ of length four, and possibly 
the edge $uw$ (but we insist that $v_1,v_2$ are nonadjacent); and one further vertex $x$ adjacent to $u$, and nonadjacent to $v_1,v_2,w$; and
\item for $i = 1,2$, an $xv_i$-path $P_i$ of length at least two, where $P_i^*$ is anticomplete to $\{u,v,w\}$, and
$V(P_1)\setminus \{x\}$ is anticomplete
to $V(P_2)\setminus \{x\}$.
\end{itemize}
Thus it has three holes; and it is {\em long} 
if all three holes are long.
It is easy to see 
that every graph containing a long ban-the-bomb has a long even hole.

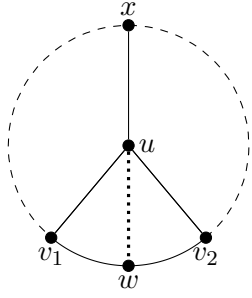
\begin{figure}[H]
\centering

\begin{tikzpicture}[scale=0.8,auto=left]
\tikzstyle{every node}=[inner sep=1.5pt, fill=black,circle,draw]

\def\r{2}
\def\a{40}
\node (u) at (0,0) {};
\draw[domain=-(90-\a):(270-\a),smooth,variable=\x,dashed] plot ({\r*cos(\x)},{\r*sin(\x)});
\draw[domain=(270-\a):(270+\a),smooth,variable=\x] plot ({\r*cos(\x)},{\r*sin(\x)});
\node (v1) at ({\r*cos(230)},{\r*sin(230)}) {};
\node (v2) at ({\r*cos(310)},{\r*sin(310)}) {};
\node (w) at ({\r*cos(270)},{\r*sin(270)}) {};
\node (x) at ({\r*cos(90)},{\r*sin(90)}) {};
\draw (u)--(v1);
\draw (u)--(v2);
\draw (u)--(x);
\draw[dotted, very thick] (u)--(w);
\draw (v1)--(u)--(v2);

\tikzstyle{every node}=[]
\draw (v1) node [below]           {$v_1$};
\draw (v2) node [below]           {$v_2$};
\draw (w) node [below]           {$w$};
\draw (u) node [right]           {$u$};
\draw (x) node [above]           {$x$};

\end{tikzpicture}
\caption{A ban-the-bomb. The dotted line is a possible edge.} \label{banthebomb}
\end{figure}

If there is no long theta, we can also search for long ban-the-bombs using the three-in-a-tree algorithm, as follows.
\begin{theorem}\label{alg:longbanthebomb}
There is an algorithm with the following specifications:
\begin{description}
\item[Input:] A graph $G$ with no long theta.
\item[Output:] Decides whether $G$ contains a long ban-the-bomb.
\item[Running time:] $\mathcal{O}(|G|^{2\ell+1})$.
\end{description}
\end{theorem}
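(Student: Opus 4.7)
The plan is to adapt the long-theta algorithm from Theorem~\ref{alg:longtheta}. For each choice of the 5-vertex core of a ban-the-bomb, namely $(u,v_1,v_2,w,x)$ with the 4-cycle $u$-$v_1$-$w$-$v_2$-$u$ and optional $uw$-chord, $v_1 v_2$ a non-edge, $xu$ an edge, and $x$ non-adjacent to $\{v_1,v_2,w\}$, I would enumerate a ``claw''-like initial structure: prefixes $Q_1,Q_2$ of the two paths $P_1,P_2$ attached at $x$, with carefully tuned lengths. The lengths are chosen so that (i) any valid completion of the claw into $P_1,P_2$ yields $|E(P_i)|\ge \ell-2$, so that all three holes of the resulting ban-the-bomb are long; and (ii) any ``failure mode'' of the completion search (described below) produces a long theta in $G$, contradicting the hypothesis.

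The completion test uses the three-in-a-tree algorithm as follows. Form $G'$ by deleting from $G$ the interiors of the guessed prefixes, together with all other vertices adjacent either to those interiors or to $\{u,v_1,v_2,w\}$, while keeping the prefix endpoints $q_1,q_2$ and the attachment points $v_1,v_2$. Re-insert $w$ as a degree-two vertex adjacent only to $v_1,v_2$ to form $G''$. The missing tails of $P_1$ and $P_2$ are vertex-disjoint induced paths from $q_1$ to $v_1$ and from $q_2$ to $v_2$; concatenated through the bridge $v_1$-$w$-$v_2$, they form a single induced $q_1 q_2$-path in $G''$ passing through $w$. To detect such a path I would apply Theorem~\ref{alg:threeinatree} to the triple $\{q_1,q_2,w\}$ in $G''$.

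The hard part will be the correctness of this reduction, since three-in-a-tree returns some induced tree containing the triple, which need not be the desired path through $w$. In the bad cases (e.g., the unique $q_1 q_2$-path of the returned tree bypasses $w$, or the central branching vertex lies away from $w$), the three arms of a $K_{1,3}$-subdivision inside the tree, combined with the edges $ux$, $uv_1$, $uv_2$ of the core, form three internally disjoint induced paths between $u$ and the branching vertex; this is a theta in $G$, because $u$ is non-adjacent to everything in $G'$ by construction. Exploiting the freedom in choosing the prefix lengths $|E(Q_i)|$, I would arrange that each such potential theta is forced to be long, which is excluded by hypothesis; hence the bad cases cannot occur, and a positive three-in-a-tree answer decodes into a genuine long ban-the-bomb.

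The running-time bookkeeping mirrors the long-theta case: the number of core-plus-prefix configurations is $\mathcal{O}(|G|^{2\ell-3})$ (two extra enumerated vertices beyond the long-theta claw, accounting for the additional ban-the-bomb core vertices beyond those that play the role of prefix endpoints), and each three-in-a-tree call costs $\mathcal{O}(|G|^4)$, for a total of $\mathcal{O}(|G|^{2\ell+1})$ as claimed.
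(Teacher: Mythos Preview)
Your overall strategy---guess a bounded-size piece of the ban-the-bomb and use three-in-a-tree to complete it, with the failure mode producing a long theta---is exactly the paper's approach. But you have anchored the prefixes at the wrong end. You take $Q_1,Q_2$ starting at $x$, with free terminals $\{q_1,q_2,w\}$; the paper instead takes prefixes of length $\ell-4$ starting at $v_1,v_2$, with free terminals $\{q_1,q_2,x\}$. This difference breaks your bad-case analysis. If the branching vertex $t$ of the returned tree is not $w$, you claim three internally disjoint $t$--$u$ paths using the edges $ux,uv_1,uv_2$. But the only way to route from $q_1$ back to $u$ through the guessed structure is via $Q_1$ to $x$ and then along $xu$, and likewise for $q_2$ via $Q_2$; both of these pass through the single vertex $x$, so at most two of your three paths are internally disjoint and no theta results. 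In the paper's orientation the two prefix arms route separately to $v_1$ and $v_2$ and then to $u$, while the third arm reaches $u$ via $xu$, yielding three genuinely disjoint paths.

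There is a second, independent problem with your construction of $G'$: you delete every vertex adjacent to $\{u,v_1,v_2,w\}$ (keeping only $q_1,q_2,v_1,v_2$), but the tails you are trying to find end at $v_1,v_2$, so the penultimate vertex of each tail is a neighbour of $v_i$ and gets deleted. Hence even when a long ban-the-bomb with the guessed core and prefixes exists, its tails do not survive in $G'$ and three-in-a-tree cannot recover them. The paper avoids this because its tails end at $x$, which is one of the three kept terminals, so neighbours of $x$ are \emph{not} deleted; control over adjacencies to $x$ comes instead from $x$ being a vertex of the induced tree.
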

\begin{proof}
Let us say a {\em bomb} is a graph consisting of a path $R$ of length $2\ell-6$, with middle three vertices $v_1\dd w\dd v_2$ in order
and two more vertices $u,v$, where $u$ is adjacent to $v_1,v_2$ and possibly to $x$, but to no other vertices of $R$,
and $x$ adjacent to $u$ but to no vertex of $R$.
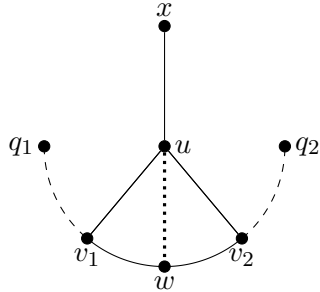
\begin{figure}[H]
\centering

\begin{tikzpicture}[scale=0.8,auto=left]
\tikzstyle{every node}=[inner sep=1.5pt, fill=black,circle,draw]

\def\r{2}
\def\a{40}
\node (u) at (0,0) {};
\draw[domain=-(90-\a):0,smooth,variable=\x,dashed] plot ({\r*cos(\x)},{\r*sin(\x)});
\draw[domain= 180:(270-\a),smooth,variable=\x,dashed] plot ({\r*cos(\x)},{\r*sin(\x)});
\draw[domain=(270-\a):(270+\a),smooth,variable=\x] plot ({\r*cos(\x)},{\r*sin(\x)});
\node (v1) at ({\r*cos(230)},{\r*sin(230)}) {};
\node (v2) at ({\r*cos(310)},{\r*sin(310)}) {};
\node (w) at ({\r*cos(270)},{\r*sin(270)}) {};
\node (x) at ({\r*cos(90)},{\r*sin(90)}) {};
\node (q1) at ({\r*cos(180)},{\r*sin(180)}) {};
\node (q2) at ({\r*cos(0)},{\r*sin(0)}) {};

\draw (u)--(v1);
\draw (u)--(v2);
\draw (u)--(x);
\draw[dotted, very thick] (u)--(w);
\draw (v1)--(u)--(v2);

\tikzstyle{every node}=[]
\draw (v1) node [below]           {$v_1$};
\draw (v2) node [below]           {$v_2$};
\draw (w) node [below]           {$w$};
\draw (u) node [right]           {$u$};
\draw (x) node [above]           {$x$};
\draw (q1) node [left]           {$q_1$};
\draw (q2) node [right]           {$q_2$};

\end{tikzpicture}
\caption{A bomb. The dashed lines are paths of length $2\ell-6$, and the dotted line is a possible edge.} \label{bomb}
\end{figure}

If there is a long ban-the-bomb in $G$, with vertices $u,v_1,v_2,w,x$ and paths $P_1,P_2$ as in the definition, then $P_1,P_2$
both have length at least $\ell-2$. For $i = 1,2$
let $Q_i$ be the subpath of $P_i$ of length $\ell-4$ with one end $v_i$, and let $q_i$ be the other end of $Q_i$; then
the subgraph induced on $V(Q_1\cup Q_2)\cup \{u,w,x\}$ is a bomb.
To search for long ban-the-bombs, we enumerate all induced subgraphs of $G$ that are bombs. For each such induced bomb $B$, let $L$
be its three vertices of degree one; check if there is an induced tree containing the vertices in $L$, in the graph obtained from $G$
by deleting all vertices not in $L$ that belong to or have neighbours in $V(B)\setminus L$.  If so, output that $G$ contains
a long ban-the-bomb and stop. If no bomb has such a tree, output that there is no long ban-the-bomb.

This concludes the description of the algorithm. A bomb has $2\ell-3$ vertices, so there are $\mathcal{O}(|G|^{2\ell-3})$ choices for the 
bomb, and the running time is
$\mathcal{O}(|G|^{2\ell+1})$.

If a bomb is contained in a long ban-the-bomb, then such a tree exists, and the outcome is correct, but the converse is less clear.
Suppose that for some bomb $B$ there is a tree $T$ as described in the algorithm. Let $R$ be as in the definition of a bomb,
with ends $q_1,q_2$ where $q_1,v_1,w,v_2,q_2$ are in order. 
There is a vertex
$t\in V(T)$ and three paths $T_1,T_2,T_3$ of $T$ (possibly of length zero) between $t$ and $q_1,q_2,x$ respectively,
pairwise anticomplete except for $t$.
For $i = 1,2$, the hole $t\dd T_i\dd q_i\dd R\dd v_1\dd w\dd x\dd T_3\dd t$ is long for $i = 1,2$, since $T_i\cup T_3$ has length at least two;
so if 
$t\ne x$, there is a long theta formed by the paths $t\dd T_i\dd q_i\dd R\dd v_i\dd u$ for $i = 1,2$, and the path
$t\dd T_3\dd x\dd u$, a contradiction. Thus $t=x$, and we have a long ban-the-bomb. This proves correctness.

\end{proof}

A {\em triangle} is a graph consisting of three pairwise adjacent vertices.
A {\em near-prism} is a graph consisting of two triangles with vertex sets $\{a_1, a_2, a_3\}$ and $\{b_1, b_2, b_3\}$, 
sharing at most one vertex, and three pairwise vertex-disjoint paths $P_1, P_2, P_3$, such that $P_i$ has ends $a_i$ and $b_i$ for 
$i \in \{1,2,3\}$, and it is {\em long} if the subgraph induced on $V(P_i\cup P_j)$ is a long hole for all distinct $i,j\in \{1,2,3\}$.
It is a {\em prism} if the two triangles are vertex-disjoint. 
We call $P_1, P_2, P_3$ the {\em constituent paths} of the near-prism. It is easy to see that every graph with a long near-prism has a 
long even hole. 

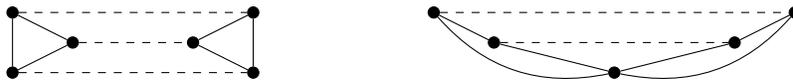
\begin{figure}[H]
\centering
\begin{tikzpicture}[scale=.8,auto=left]
\tikzstyle{every node}=[inner sep=1.5pt, fill=black,circle,draw]
\node (a1) at (1,0) {};
\node (a2) at (0,.5) {};
\node (a3) at (0,-.5) {};
\node (b1) at (3,0) {};
\node (b2) at (4,.5) {};
\node (b3) at (4,-.5) {};

\foreach \from/\to in {a1/a2,a1/a3,a2/a3,b1/b2,b1/b3,b2/b3}
\draw [-] (\from) -- (\to);
\foreach \from/\to in {a1/b1,a2/b2,a3/b3}
\draw [dashed] (\from) -- (\to);

\node (a1) at (7,.5) {};
\node (a2) at (8,0) {};
\node (a3) at (10,-.5) {};
\node (b1) at (13,.5) {};
\node (b2) at (12,0) {};

\foreach \from/\to in {a1/a2,a2/a3,b1/b2,b2/a3}
\draw [-] (\from) -- (\to);
\draw[bend left=30] (a3) to (a1);
\draw[bend right=30] (a3) to (b1);
\foreach \from/\to in {a1/b1,a2/b2}
\draw [dashed] (\from) -- (\to);

\end{tikzpicture}

\caption{Near-prisms.} \label{nearprism}
\end{figure}

\section{Detecting a clean lightest long near-prism}\label{sec:sketch}

Our next goal is a poly-time algorithm to test whether $G$ contains a long near-prism, for graphs $G$ that contain
none of the other easily-detectable configurations; and this and the next two sections are devoted to this.
Let us say a graph $G$ is a {\em prospect} if $G$ contains no long even hole of length at most $2\ell$, no long jewel of order at most $\ell+1$,
no long theta and no long ban-the-bomb.
We will show the following (the outline of this algorithm is like that of \cite{chudnovsky2008thetaprism}):
\begin{theorem}\label{alg:longprisms}
There is an algorithm with the following specifications:
\begin{description}
\item[Input:] A prospect $G$.
\item[Output:] Decides whether $G$ contains a long near-prism.
\item[Running time:] $\mathcal{O}(|G|^{9\ell+3})$.
\end{description}
\end{theorem}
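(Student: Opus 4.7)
The plan is to follow the framework of Chudnovsky--Kapadia's theta/prism algorithm, adapted to the ``long'' setting. The approach has three layers: fix a canonical target (a \emph{lightest} long near-prism), show that after a small amount of cleaning this target can be recovered by shortest-path computations, and enumerate a polynomial-size cleaning list so that one of the cleanings succeeds.

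First I would call a long near-prism \emph{lightest} if it minimizes $|V(P_1\cup P_2\cup P_3)|$, and fix such a $\Pi=(\{a_1,a_2,a_3\},\{b_1,b_2,b_3\},P_1,P_2,P_3)$ assuming $G$ contains one at all. Using the prospect assumptions, I would analyze the possible attachments of an external vertex $v\in V(G)\setminus V(\Pi)$. If $v$ has neighbours on two different constituent paths $P_i,P_j$, then joining $v$ to the nearest attachments via short paths and closing the configuration along $\Pi$ produces either a long theta (when $v$'s two attachments are separated enough to use all three constituent paths) or a long ban-the-bomb (when they are close), contradicting the prospect hypothesis. If $v$ has ``many'' neighbours on a single $P_i$, then $\Pi$ supplies two subpaths of different parity between the outermost such neighbours, yielding a long jewel of order at most $\ell+1$; and if the attachments span a long interval without being many, minimality of $\Pi$ is violated by rerouting $P_i$ through $v$. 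This yields a structural dichotomy identifying a small set of \emph{major} vertices whose removal leaves $\Pi$ clean, and moreover pins their location down to a short interval of each $P_i$.

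Second, using this structural analysis, I would build a cleaning list of size $|G|^{\mathcal{O}(\ell)}$: for each small tuple of ``witness'' vertices of $G$ which, if they belong to $\Pi$, would locate each major attachment interval, take the set of vertices satisfying a simple local test relative to the tuple. The guarantee is that for some tuple corresponding to the actual $\Pi$, the resulting set $X$ contains every major vertex of $\Pi$ and no vertex of $V(\Pi)$. Third, for each candidate $X$ I would search in $G\setminus X$ for a clean lightest long near-prism by enumerating the two triangles ($\mathcal{O}(|G|^6)$ choices) and $\mathcal{O}(\ell)$ evenly spaced ``anchor'' vertices along each of the three would-be paths, and then completing the paths by a shortest-path computation in the graph obtained by deleting the closed neighbourhood of the interiors of the other two guessed anchor sequences (except at the relevant triangle endpoints). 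Cleanness combined with minimality of $\Pi$ forces each such shortest path to have exactly the correct length and to induce together with the others a long near-prism. Balancing the cleaning list against the enumeration gives a total running time of $\mathcal{O}(|G|^{9\ell+3})$.

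The main obstacle is the first step: proving the structural dichotomy for major vertices. The case analysis must cover all possible attachment patterns of $v$ on $\Pi$, and must use all four pieces of the prospect hypothesis (absence of long thetas, long ban-the-bombs, long jewels of order at most $\ell+1$, and long even holes of length at most $2\ell$) together with minimality of $\Pi$; in particular a would-be major vertex with only one neighbour on each of two paths still has to be controlled, and this is where the long ban-the-bomb hypothesis (rather than just long theta) is essential. Once this structural dichotomy is established, the construction of the cleaning list and the shortest-path recovery are comparatively routine adaptations of the cleaning technology used in the long odd hole algorithm of \cite{chudnovsky2019detectinglongodd}.
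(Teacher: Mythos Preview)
Your structural analysis of major vertices is where the argument breaks. You claim that if $v\notin V(\Pi)$ has neighbours on two different constituent paths $P_i,P_j$, then closing the configuration through $\Pi$ yields a long theta or a long ban-the-bomb, contradicting the prospect hypothesis. This is false: in a shortest long near-prism $K$, $K$-major vertices with neighbours on two or even all three constituent paths can and do exist without producing any forbidden configuration. (Indeed the paper's Lemma~\ref{2paths} shows that \emph{every} $K$-major vertex has neighbours on at least two constituent paths.) For example, if $v$ is adjacent to one interior vertex of each $P_i$, the three $v$-to-triangle paths land at the three distinct triangle corners, giving a pyramid rather than a theta; the prospect hypothesis says nothing about pyramids. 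So your ``dichotomy'' does not hold, and major vertices are not pinned to short intervals of the $P_i$; the cleaning list you describe has no reason to contain a set that covers all major vertices while missing $V(\Pi)$.

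What the paper actually does is substantially more delicate. One chooses a $(K,\mathcal{F})$-\emph{extremal} major vertex $x$ (maximizing $A_1(x)$, or $A_2(x)$ if $x$ misses $P_1$), defines a ``distant'' relation between major vertices, and proves (Lemma~\ref{distant}) that any major $y$ distant from $x$ has exactly two neighbours in $A_{1,2,3}(x)$ and they are adjacent. This, together with Lemmas~\ref{twotriangles} and~\ref{twoclosetwos}, yields a \emph{contrivance} $(x,y,\alpha,h,\mathcal{Q})$ with $|V(\mathcal{Q})|\le 6\ell-2$ capturing all major vertices (Lemma~\ref{contrivance}). Crucially, the contrivance does not directly give the cleaning set: one must first \emph{reconstruct} the unknown region $L(x)\subseteq V(K)$ by lightest-path computations (Lemmas~\ref{firstpath}--\ref{reconstruct}), using that $K$ is lightest in the lexicographic sense (not merely shortest as you define it; the tie-breaking is needed for uniqueness of the reconstructed paths). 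Only then does one obtain a set $X$ containing all major vertices and disjoint from $V(K)$. Your proposal lacks both the extremal/distant analysis and the reconstruction step, and these are the heart of the argument.
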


We need: 
\begin{lem}\label{longerprism}
Let $G$ be a prospect, and let $K$ be a long near-prism in $G$, with constituent paths $P_1,P_2,P_3$.
Then at least two of $P_1,P_2,P_3$ have length at least $\ell$. 
\end{lem}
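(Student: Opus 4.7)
The plan is to prove the contrapositive by contradiction. Assume at most one of $P_1,P_2,P_3$ has length at least $\ell$, and after relabelling take $|E(P_1)|,|E(P_2)|<\ell$. First I do a parity analysis on the hole $P_1\cup P_2$: its length is $|E(P_1)|+|E(P_2)|+2\le 2\ell$, and if this length is even then $P_1\cup P_2$ is a long even hole of length at most $2\ell$, contradicting the prospect hypothesis. So $|E(P_1)|\not\equiv|E(P_2)|\pmod 2$, and the unique even hole among $P_1\cup P_2,\ P_1\cup P_3,\ P_2\cup P_3$ must involve $P_3$; after a further relabelling assume $|E(P_2)|\equiv |E(P_3)|\pmod 2$, so $P_2\cup P_3$ is that even hole. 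The same length test shows that unless $|E(P_3)|\ge \ell+1$ the hole $P_2\cup P_3$ would be a forbidden long even hole of length at most $2\ell$; in particular $P_3$ is forced to be long.

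The second step is to produce a long jewel between $a_3$ and $b_3$. Set $Q_1=a_3\dd a_1\dd P_1\dd b_1\dd b_3$ and $Q_2=a_3\dd a_2\dd P_2\dd b_2\dd b_3$, and take $P=P_3$. Each $Q_i$ is induced because the hole $P_i\cup P_3$ is induced in $K$ and the two triangle edges extending $P_i$ to a path from $a_3$ to $b_3$ add no chord. The two parities of $Q_1,Q_2$ are the parities of $|E(P_1)|,|E(P_2)|$, which differ. The length condition $|E(P)|\ge \ell-\min(|E(Q_1)|,|E(Q_2)|)$ holds trivially since $|E(P_3)|\ge \ell+1$. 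The anticomplete requirement is the payoff of $K$ being a \emph{long near-prism}: because each of $P_1\cup P_3$ and $P_2\cup P_3$
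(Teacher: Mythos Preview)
Your approach matches the paper's: from $|E(P_1)|,|E(P_2)|<\ell$ one deduces that the hole induced on $V(P_1\cup P_2)$ has length between $\ell$ and $2\ell$ and hence is odd (since $G$ is a prospect), so the paths $Q_i=a_3\dd a_i\dd P_i\dd b_i\dd b_3$ for $i=1,2$ have opposite parity, and then $Q_1,Q_2,P_3$ form a long jewel of bounded order, contradicting the prospect hypothesis.

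Two minor remarks. Your detour establishing $|E(P_3)|\ge\ell+1$ is correct but unnecessary: the length condition $|E(P_3)|\ge\ell-\min(|E(Q_1)|,|E(Q_2)|)$ in the jewel definition follows immediately from the fact that the hole on $V(P_i\cup P_3)$ is long, i.e.\ $|E(P_i)|+|E(P_3)|+2\ge\ell$, which gives $|E(P_3)|\ge\ell-|E(Q_i)|$ directly. Also, your write-up is truncated mid-sentence before finishing the anticomplete check and before bounding the order; both are routine. For the former, since the subgraph induced on $V(P_i\cup P_3)$ is a hole (hence chordless), no vertex of $P_3^*$ is adjacent to any vertex of $V(P_i)=Q_i^*$. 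For the latter, $|V(Q_i)|=|E(P_i)|+3\le\ell+2$, which is the bound needed.
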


\begin{proof}

Suppose that $P_1,P_2$ both have length less than $\ell$. Then the long hole induced on $V(P_1\cup P_2)$ is odd, since its length is 
between $\ell$ and $2\ell$, and $G$ is a prospect; and so the paths $a_3\dd a_1\dd P_1\dd b_1\dd b_3$ and 
%HOLE $2\ell
$a_3\dd a_2\dd P_2\dd b_2\dd b_3$
have different parity. Hence these two paths with $P_3$ form a long jewel of order at most $\ell+1$, a contradiction. This proves
%JEWEL \ell+1
Lemma \ref{longerprism}.

\end{proof}

For a graph $G$ and $x,y \in V(G)$, we call the length of a shortest $xy$-path in $G$ the {\em $G$-distance} between $x$ and $y$ and denote it by $d_G(x,y)$. 
Let us say a
{\em frame} $F$ is a graph 
with the following properties:
\begin{itemize}
\item $F$ is the union of two triangles with vertex sets $A,B$ with at most one vertex in common, 
and three graphs $F_1,F_2,F_3$ that are pairwise vertex-disjoint; and each of $F_1,F_2,F_3$ has exactly one vertex in $A$ and one in $B$;
\item for $1\le i\le 3$, $F_i$ is either a path with one end in $A$ and the other in $B$ of length at most $\ell-1$, or the disjoint union of 
two paths, both of length exactly $\ell/2-1$,
one with an end in $A$ and the other with an end in $B$ (it follows that if the triangles share a vertex then one of the $F_i$ is a path of length zero); and
\item at most one of $F_1,F_2,F_3$ is a path.
\end{itemize}
\begin{figure}[H]
\centering
\begin{tikzpicture}[scale=.8,auto=left]
\tikzstyle{every node}=[inner sep=1.5pt, fill=black,circle,draw]
\def\r{8}
\def\s{6}
\node (a1) at (.7,0) {};
\node (a2) at (0,.8) {};
\node (a3) at (0,-.8) {};
\node (b1) at (3.3,0) {};
\node (b2) at (4,.8) {};
\node (b3) at (4,-.8) {};
\node (s1) at (1.5,0) {};
\node (s2) at (0.8,.8) {};
\node (s3) at (0.8,-.8) {};
\node (t1) at (2.5,0) {};
\node (t2) at (3.2,.8) {};
\node (t3) at (3.2,-.8) {};

\foreach \from/\to in {a1/a2,a1/a3,a2/a3,b1/b2,b1/b3,b2/b3}
\draw [-] (\from) -- (\to);
\foreach \from/\to in {a1/s1, t1/b1,a2/s2,t2/b2,a3/s3,t3/b3}
\draw [dashed] (\from) -- (\to);

\node (a1) at (13.5-\r,.8) {};
\node (a2) at (14.5-\r,0) {};
\node (a3) at (16-\r,-.8) {};
\node (b1) at (18.5-\r,.8) {};
\node (b2) at (17.5-\r,0) {};
\node (s1) at (14.3-\r,.8) {};
\node (s2) at (15.3-\r,0) {};
\node (t1) at (17.7-\r,.8) {};
\node (t2) at (16.7-\r,0) {};

\foreach \from/\to in {a1/a2,a2/a3,b1/b2,b2/a3}
\draw [-] (\from) -- (\to);
\draw[bend left=30] (a3) to (a1);
\draw[bend right=30] (a3) to (b1);
\foreach \from/\to in {a1/s1, t1/b1, a2/s2, t2/b2}
\draw [dashed] (\from) -- (\to);

\node (a1) at (6.7+\s,0) {};
\node (a2) at (6+\s,.8) {};
\node (a3) at (6+\s,-.8) {};
\node (b1) at (9.3+\s,0) {};
\node (b2) at (10+\s,.8) {};
\node (b3) at (10+\s,-.8) {};
\node (s1) at (7.5+\s,0) {};
\node (s2) at (6.8+\s,.8) {};
\node (t1) at (8.5+\s,0) {};
\node (t2) at (9.2+\s,.8) {};

\foreach \from/\to in {a1/a2,a1/a3,a2/a3,b1/b2,b1/b3,b2/b3}
\draw [-] (\from) -- (\to);
\foreach \from/\to in {a1/s1, t1/b1,a2/s2,t2/b2,a3/b3}
\draw [dashed] (\from) -- (\to);

\end{tikzpicture}

\caption{Frames} \label{frames}
\end{figure}

%\item for $1\le i\le 3$, if $F_i$ is a path then $a_i=t_i$ and $b_i=s_i$; and if $F_i$ is the union of two paths then one has ends $a_i, s_i$ and the other has ends $b_i, t_i$.
We call $A,B$ the {\em bases} of the frame.
A {\em frame in $G$} means an induced subgraph
of $G$ that is a frame. The {\em ends} of a frame $F$ are its vertices of degree one,
and the set of vertices of $F$ that are not ends of $F$
is denoted by $F^*$. 

%Thus each end is one of $s_1,s_2,s_3,t_1,t_2,t_3$, and $s_i, t_i$ are ends if and only if $F_i$ has two components.

If $K$ is a near-prism in a prospect $G$, then $K$ is long if and only if $K$ contains a frame, by Lemma \ref{longerprism}.
(Indeed, every long near-prism contains a unique frame, which we denote by $F_K$.)
Thus if at some stage
we have a frame $F$ in the input prospect $G$, and we find a near-prism $K$ of $G$ containing $F$, then we know that $K$ 
is long without having to check the lengths of the missing parts of its constituent paths; and conversely, if there is a long near-prism $K$
in $G$, and we examine all frames in $G$ and test, for each one, whether it is contained in a long near-prism, then eventually we will 
test $F_K$ and report success.
(Enumerating all frames can be 
done in polynomial time, since frames have a bounded number of vertices; the more difficult issue is to handle a given frame in polynomial time.)

That is our basic method, to try all frames and see if they can be extended to long near-prisms. But it is helpful to have a little
more information about the long near-prism we are looking for than just its frame. For instance, for the first frame in figure \ref{frames}, we would like to know which vertex of the left triangle corresponds to which one of the right.
Let us say an {\em ordered frame} $\mathcal{F}$
consists of a frame $F$ together with a linear ordering of both of its bases.
Let $K$ be a long near-prism and let $F_K$ be its frame. Let $K$ have constituent paths $P_1,P_2,P_3$, where 
$|E(P_1)|\le |E(P_2)|\le |E(P_3)|$, and for $1\le i\le 3$ let $P_i$ have ends $a_i, b_i$. Then these six (or possibly, five)
vertices belong to the two bases of $F_K$, and we would like to know this labelling. 
We define $\mathcal{F}_K$ to be the ordered frame consisting of 
$F_K$ and the orderings $a_1<a_2<a_3$ and $b_1<b_2<b_3$, and call it an {\em ordered frame of $K$}. 
(It is not quite unique, because two of $P_1,P_2,P_3$ might have the same length.) 

If $\mathcal{F}$ is an ordered frame of a long near-prism
$K$, with bases $\{a_1,a_2,a_3\},\{b_1,b_2,b_3\}$ and constituent paths $P_1,P_2,P_3$, where $P_i$ has ends $a_i, b_i$ for $i = 1,2,3$,
we say that $P_1, P_2, P_3$ are {\em numbered according to $\mathcal{F}$} if the orderings of $\mathcal{F}$ are $a_1<a_2<a_3$ and
$b_1<b_2<b_3$.

If $K$ is a long near-prism in $G$, we call a vertex $q \in V(G) \setminus V(K)$ {\em $K$-major} if there is no three-vertex 
path of $K$ containing all neighbours
of $q$ in $V(K)$, and we say $K$ is {\em clean} if there are no $K$-major vertices.
We call a long near-prism $K'$ {\em shorter}
than a long near-prism $K$ if $|V(K')| < |V(K)|$, and thereby define a {\em shortest} long near-prism.

We will test for long near-prisms as follows. Shortest long near-prisms have special properties that make them easier to
detect than general long near-prisms, so we will hunt for a near-prism with these special properties. Sometimes it is 
convenient to 
pin down the target even further: we will hunt for the ``lightest'' long near-prism, the lexicographically earliest of 
all shortest long near-prisms.

To do this we will first guess its ordered frame: so now we need a poly-time algorithm that, given an ordered frame, will test 
whether there is a lightest long near-prism with this ordered frame. This comes in two phases:
\begin{itemize}
\item 
Given an ordered frame, we generate a ``cleaning list'' of polynomially many sets of vertices, such that for every
shortest long near-prism $K$ of $G$ with the given ordered frame, there exists $X$ in the list such that $K$ is clean in $G\setminus X$; that is,
$X$ is disjoint from $V(K)$, and $X$ contains all $K$-major vertices. This is explained in section \ref{sec:kmajor}.
\item For each $X$ in this cleaning list, we search for a clean lightest long near-prism with the given ordered frame in $G\setminus X$.
This algorithm is explained in the remainder of this section.
\end{itemize}

A long near-prism $K$ in $G$ is {\em tidy} if $F_K^*$ is anticomplete to $V(G) \setminus V(K)$.
If we have a frame $F$ in $G$, and we are trying to test if there is a long near-prism $K$ with $F_K=F$, we might as well delete all vertices of $G$
not in $V(F)$ that have a neighbour in $F^*$, because no such vertex belongs to $V(K)$. If $G'$ is the graph that remains, and the 
long near-prism we are looking for exists, then it is tidy in $G'$.

\begin{lem}\label{prismjump}
Let $G$ be a prospect, and 
let $K$ be a tidy shortest long near-prism in $G$, with
constituent paths $P_1,P_2,P_3$.
For all distinct $i,j\in \{1,2,3\}$, there is no induced path $Q$ of $G$ with one end in $V(P_i)$ and the other 
in $V(P_j)$, such that 
\begin{itemize}
\item $V(Q)$ is 
anticomplete to $V(P_k)$ where $k\in \{1,2,3\}\setminus \{i,j\}$;
\item no vertex of $Q^*$ is $K$-major; and
\item $2|E(Q)|\le 1+\min(|E(P_i)|,|E(P_j)|)$.
\end{itemize}
\end{lem}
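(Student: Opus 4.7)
The plan is to assume such a $Q$ exists, with ends $p_i\in V(P_i)$ and $p_j\in V(P_j)$, and derive a contradiction by exhibiting either a long near-prism strictly smaller than $K$ (contradicting the minimality of $K$) or one of the configurations forbidden in a prospect.

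First I would locate the ends of $Q$. Since $V(Q)$ is anticomplete to $V(P_k)$ and each of $a_i,b_i$ is adjacent to a vertex of $V(P_k)$ by a triangle edge of $K$, we must have $p_i\in V(P_i)\setminus\{a_i,b_i\}$, and similarly $p_j\in V(P_j)\setminus\{a_j,b_j\}$. I would then split $P_i=a_i\dd P_i^a\dd p_i\dd P_i^b\dd b_i$ and $P_j=a_j\dd P_j^a\dd p_j\dd P_j^b\dd b_j$, and work with the three induced $p_ip_j$-paths
\[
A=p_i\dd P_i^a\dd a_i\dd a_j\dd P_j^a\dd p_j,\qquad
B=p_i\dd P_i^b\dd b_i\dd b_j\dd P_j^b\dd p_j,\qquad
C=Q,
\]
of lengths $\alpha,\beta,\gamma$ respectively. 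Because $P_i\cup P_j$ induces a long hole of $K$, the paths $A$ and $B$ are induced and $A^*$ is anticomplete to $B^*$; and since $Q^*$, by definition of $K$-major, is disjoint from $V(K)$, the interior of $C$ is disjoint from $A^*\cup B^*$. The identities $\alpha+\beta=|E(P_i)|+|E(P_j)|+2$ and $2\gamma\le 1+\min(|E(P_i)|,|E(P_j)|)$ force $\gamma<\min(\alpha,\beta)$, so both $A\cup C$ and $B\cup C$ are cycles strictly shorter than the long hole $A\cup B$ of $K$.

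From here the plan is to build a long near-prism $K'$ with the same triangles as $K$, keeping $P_k$, but rerouting $P_i$ and $P_j$ through $Q$ so as to use the shorter of $A,B$ (say $A$, without loss of generality after swapping): one new constituent path is $a_i\dd P_i^a\dd p_i\dd Q\dd p_j\dd P_j^b\dd b_j$, and the other is to be constructed from the surviving pieces $P_j^a$ and $P_i^b$ together with an appropriate connection, after relabelling the triangle-to-triangle matching to $a_i\leftrightarrow b_j$, $a_j\leftrightarrow b_i$, $a_k\leftrightarrow b_k$. The hypothesis that no vertex of $Q^*$ is $K$-major (so each vertex of $Q^*$ attaches to $V(K)$ only along a three-vertex window) would be used to verify that the new constituent paths are induced and pairwise vertex-disjoint, and the length bound on $\gamma$ to verify that $|V(K')|<|V(K)|$.

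The main obstacle is that $Q$ cannot be used twice in $K'$, yet the ``unused'' portions $P_j^a$ and $P_i^b$ naturally attach to the wrong pair of triangle vertices, and the two sides of $K$ are anticomplete inside $V(K)\setminus V(Q)$; so the second rerouted constituent path cannot always be found inside $V(K)$. The resolution I would adopt is a case analysis based on how $Q^*$ attaches to $V(K)$ and on the parity of the long hole $A\cup B$, which controls the parities of $\alpha+\gamma$ and $\beta+\gamma$ through the relation $(\alpha+\gamma)+(\beta+\gamma)\equiv\alpha+\beta\pmod 2$. In each case I expect one of the following contradictions to become available: (i) a valid smaller long near-prism $K'$ as described above; (ii) a long theta on $V(A)\cup V(B)\cup V(C)$, when $Q^*$ is anticomplete to $A^*\cup B^*$ and all three of $\alpha+\beta,\alpha+\gamma,\beta+\gamma$ are at least $\ell$; (iii) a long jewel of order at most $\ell+1$, obtained by taking as the two short ``sides'' the two of $A,B,C$ of opposite parity and as the ``long side'' a suitable path along the third; or (iv) a long even hole of length in $[\ell,2\ell]$, realised as the smaller of $A\cup C,B\cup C$ when parity forces it to be even and the bound $\gamma<\min(\alpha,\beta)$ keeps its length in the forbidden range. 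Each outcome contradicts either the choice of $K$ or the assumption that $G$ is a prospect. The hard part of the argument will be the case analysis: showing, for every attachment pattern of $Q^*$ to $V(K)$ and every parity of $\alpha+\beta$, that at least one of (i)--(iv) is always available.
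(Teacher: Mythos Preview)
Your proposal has the right broad strategy---find a forbidden configuration or a shorter long near-prism---but the concrete construction you give for the smaller near-prism does not work, and the case analysis you sketch is missing a key ingredient.

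First, a small but real error: you assert that $Q^*$ is disjoint from $V(K)$ ``by definition of $K$-major''. This is backwards. The notion of $K$-major is only defined for vertices of $V(G)\setminus V(K)$, so every vertex of $V(K)$ is trivially ``not $K$-major''; nothing in the hypotheses prevents $Q^*$ from meeting $V(P_i)\cup V(P_j)$, nor from having further neighbours there. The paper handles this by choosing a counterexample with $Q\cup K$ minimal, which forces only the penultimate vertices $q_1,q_t$ of $Q$ to attach to $V(K)$; without some such reduction your paths $A,B,C$ need not interact as cleanly as you assume.

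Second, and more seriously, the rerouting idea fails. After your relabelling $a_i\leftrightarrow b_j$, $a_j\leftrightarrow b_i$, $a_k\leftrightarrow b_k$, you need an induced $a_jb_i$-path disjoint from $Q$ and anticomplete to $V(P_k)$. The leftover pieces $P_j^a$ and $P_i^b$ end at $p_j$ and $p_i$, which lie on opposite sides of $Q$ and are anticomplete to one another inside $K$; there is no way to splice them into a single path without reusing $Q$ or passing through $P_k$. So no smaller near-prism with the \emph{same} two triangles can be built this way. What the paper does instead is keep the triangle $\{a_1,a_2,a_3\}$ but \emph{replace the other triangle} by $\{q_t,u_j,v_j\}$, where $u_j,v_j$ are the two (adjacent) neighbours of $q_t$ on $P_j$; the bound $2|E(Q)|\le 1+\min(|E(P_i)|,|E(P_j)|)$ is then exactly what forces this new long prism to be shorter than $K$. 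When $q_t$ has a single neighbour on $P_j$ one gets a long theta, and when $q_t$ has two nonadjacent neighbours one substitutes $q_t$ into $P_j$ to obtain another shortest long near-prism and then invokes minimality to produce a long \emph{ban-the-bomb}---a configuration your outcomes (i)--(iv) never mention and without which the length-two attachment case cannot be closed.
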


\begin{proof}
Suppose that there is such a path $Q$ for some shortest long near-prism $K$ in $G$, and choose $Q,K$ with minimal union. Let
the vertices of $Q$ be $q_0\dd q_1\cc q_t\dd q_{t+1}$ where $q_0\in V(P_i)$ and $q_{t+1}\in V(P_j)$. From the minimality of $Q\cup K$,
none of $q_2\ll q_{t-1}$ has a neighbour in $V(K)$. Since $q_1\ll q_t$ are not $K$-major, it follows that $t\ge 2$, and there is a 
three-vertex path of $K$ that contains all neighbours of $q_t$ in $V(K)$; and since $K$ is tidy, all neighbours of 
$q_t$ in $V(K)$ belong to $V(P_j)$, and so there is a minimal subpath $R_j$ of $P_j$ containing all neighbours of $q_t$ in $V(K)$.
Thus $R_j$ has length zero, one or two, and we will treat these cases separately. Let the ends of $R_j$ be $u_j,v_j$, 
where $a_j, u_j, v_j, b_j$ are in order in $P_j$ (in the usual notation). Since $K$ is tidy, the paths $a_j\dd P_j\dd u_j$ and $v_j\dd P_j\dd b_j$ both have length at least $\ell/2-1$. Similarly, there is a minimal subpath
$R_i$ of $P_i$ containing all neighbours of $q_i$ in $V(K)$, of length at most two, with ends $u_i,v_i$ say. 
Let $k\in \{1,2,3\}\setminus \{i,j\}$.
\\
\\
(1) {\em $R_j$ does not have length one.}
\\
\\
Suppose it does. Let $S$ be the induced path from $q_1$ to $a_i$ with interior in $V(P_i)$; then there is a prism with bases
$\{a_1,a_2,a_3\}$, $\{q_t, u_j, v_j\}$ and constituent paths 
$$a_i\dd S\dd q_1\cc q_{t},$$
$$a_j\dd P_j\dd u_j,$$
$$a_k\dd P_k\dd b_k\dd b_j\dd P_j\dd v_j.$$
All of its holes are long, since
$2(\ell/2-1)+ 4\ge \ell$, and so it is a long prism, and hence not shorter than $K$. Consequently
%$2\frame+ 4\ge \ell$FRAME
$|E(S)|+t-1\ge |E(P_i)|$. Similarly, let $T$ be the induced path from $q_i$ to $b_i$ with interior in $V(P_i)$;
then $|E(T)|+t-1\ge |E(P_i)|$. Adding, we obtain that $|E(S)|+|E(T)|+(2t-2)\ge 2|E(P_i)|$. But $|E(S)|+|E(T)|\le |E(P_i)|+2$, and
so $2t\ge |E(P_i)|$, contrary to the hypothesis, since $|E(Q)|=t+1$. This proves (1).
\\
\\
(2) {\em $R_j$ does not have length zero.}
\\
\\
Suppose it does; so $u_j=v_j=q_{t+1}$. By (1) with $P_i,P_j$ exchanged, it follows that either $u_i=v_i$ or $u_i,v_i$ are nonadjacent.
If $u_i=v_i$ there is a long theta with constituent paths 
$Q$ and
$$q_0\dd P_i\dd a_i\dd a_j\dd P_j\dd q_{t+1}$$ 
$$q_0\dd P_i\dd b_i\dd b_j\dd P_j\dd q_{t+1},$$ 
a contradiction. If $u_i,v_i$ are nonadjacent, there is a long theta with constituent
paths 
$$q_1\cc q_t,$$ 
$$q_1\dd u_i\dd P_i\dd a_i\dd a_j\dd P_j\dd q_{t+1}$$ 
$$q_1\dd v_i\dd P_i\dd b_i\dd b_j\dd P_j\dd q_{t+1},$$
a contradiction. This proves (2).

\bigskip

From (1) and (2) we may assume that $R_i,R_j$ both have length two. 
Let $P_j'$ be the path obtained from $P_j$ by replacing the subpath $R_j$
by $u_j\dd q_t\dd v_j$. Then $P_i,P_j', P_k$ are the constituent paths of a shortest long near-prism $K'$, also with frame $F_K$.
From the minimality of $Q\cup K$,
one of $q_1\ll q_{t-1}$ is $K'$-major. Since it is not $K$-major, it is adjacent to $q_t$, and hence must be $q_{t-1}$.
Thus $q_{t-1}$ has a neighbour in $V(K)$, and so $t=2$ (because none of $q_2\ll q_{t-1}$ has a neighbour in $V(K)$).
But then the subgraph induced on $V(P_i\cup P_j')\cup \{q_1\}$ is a long ban-the-bomb, a contradiction.
This proves Lemma \ref{prismjump}.

\end{proof}

If $F$ is a frame with bases $A,B$, and $v$ is an end of $F$, choose $u\in V(A\cup B)$ with minimum $F$-distance to $v$; we call
$v$ the {\em $u$-end} of $K$.
For each $u\in V(A\cup B)$ there is at most one $u$-end of $F$, but there might be none.
For a path $P$ with ends $a,b$, we call $v \in V(P)$ a {\em midpoint} of $P$ if
$|d_P(v,a)- d_P(v,b)|\le 1$.

We would like to assign weights to the edges of $G$, all very close to one and all different, such that no two different sets of 
edges $X,Y$ have the same total weight, and if $|X|<|Y|$ then the total weight in $X$ is less than that in $Y$. 
A convenient way to do this, and a way that is easy to handle algorithmically, is to 
take an arbitrary linear ordering of $E(G)$, say $E(G)=\{e_1\ll e_n\}$, and let edge $e_i$ have weight $1+2^{-i}$ for each $i$; 
then the total
weight in a set $X$ is less than that in a set $Y$ if and only if either $|X|<|Y|$, or $|X|=|Y|$ and $X$ is lexicographically 
earlier than $Y$ (the latter means that $Y$ contains $e_i$
where $i\in \{1\ll n\}$ is minimum with $e_i\in (X\setminus Y)\cup (Y\setminus X)$). So, let us take some linear order of $E(G)$,
and for $X,Y\subseteq E(G)$, we say $X$ is {\em lighter} 
than $Y$ if either $|X|<|Y|$, or $|X|=|Y|$ and $X$ is lexicographically earlier than $Y$.
If $G$ has a long near-prism, it has at least one shortest long near-prism, and exactly one of them is the lightest long near-prism; and we
find that for algorithms it is better to hunt for the lightest long near-prism than just a shortest one. 
These weights have $O(|G|^2)$ bits, so doing arithmetic
with them is a little time-consuming; but we can certainly find the lightest $st$-path in time $\mathcal{O}(|G|^3)$ (and if it mattered,
we could do it faster).

Let us say an $st$-path $P$ in a graph $G$ is {\em locally lightest} if for every $st$-path $Q$
that is lighter than $P$, some vertex $v$ of $Q$ satisfies $\max(d_{G}(s,v), d_G(t,v))> |E(P)|/2$. It follows immediately
that there is no such path $Q$, because 
$$d_G(s,v)+d_G(t,v)\le |E(Q)|\le |E(P)|$$
for every vertex $v$ of $V(Q)$, and therefore every locally lightest $st$-path is the (unique) lightest $st$-path.

\begin{theorem} \label{thm:detectcleanprism}
There is an algorithm with the following specifications:
\begin{description}
\item[Input:] A prospect $G$, a linear order of $E(G)$, and an ordered frame $\mathcal{F}$ in $G$.
\item[Output:] Decides either that $G$ contains a long near-prism with ordered frame $\mathcal{F}$, or that there is a no long near-prism 
that is the lightest among all long near-prisms, and has ordered frame $\mathcal{F}$, and is clean.
\item[Running time:] $\mathcal{O}(|G|^{3})$.
\end{description}
\end{theorem}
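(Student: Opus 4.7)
Let $F$ be the underlying frame of $\mathcal{F}$; for each ``split'' $F_i$ (one whose fragments have length $\ell/2-1$) let $s_i,t_i$ be its two free ends (the degree-one vertices of $F$ lying in $F_i$). My plan is to guess that the missing middle of each constituent path of the target near-prism is a lightest path in a suitably restricted subgraph, and then verify the guesses.

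Step 1 (preprocessing): form $G'$ by deleting every vertex of $V(G)\setminus V(F)$ that has a neighbour in $F^*$. If $K$ is any long near-prism with $F_K=F$, then every $v\in V(K)\setminus V(F)$ lies in some $V(Q_i)^*$ (where $Q_i$ is the missing middle of the constituent path $P_i$), and its only possible neighbours in $V(F)$ are $s_i,t_i$ (by inducedness of $P_i$ and pairwise anticompleteness of distinct $P_j$); so $K$ survives in $G'$ and is tidy there. Step 2 (per-middle computation): for each split $F_i$, let $G_i$ be the induced subgraph of $G'$ on $\{s_i,t_i\}\cup\{v\in V(G')\setminus V(F):N(v)\cap(V(F)\setminus\{s_i,t_i\})=\emptyset\}$, and compute the lightest $s_it_i$-path $\tilde Q_i$ in $G_i$ in $\mathcal{O}(|G|^3)$ time (BFS for fewest edges, then break ties lexicographically using the $1+2^{-j}$ weights). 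Step 3 (check and output): verify whether $F\cup\bigcup_i\tilde Q_i$ is an induced long near-prism; output (a) if so, (b) otherwise. The total running time is $\mathcal{O}(|G|^3)$.

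Correctness of output (a) is by construction. For output (b), suppose for contradiction that a clean lightest long near-prism $K$ with ordered frame $\mathcal{F}$ exists, with missing middles $Q_i^K$. By the same analysis as in Step 1, each $Q_i^K$ is an $s_it_i$-path in $G_i$, so $\tilde Q_i$ is at most as light as $Q_i^K$. If $\tilde Q_i=Q_i^K$ for every split $i$, then $F\cup\bigcup_iQ_i^K=K$ passes the check and the algorithm outputs (a), a contradiction; so some $\tilde Q_i\neq Q_i^K$, and since the weighting is injective on paths, $\tilde Q_i$ is strictly lighter. If substituting $\tilde Q_i$ for $Q_i^K$ in $K$ produced a valid long near-prism, it would be strictly lighter than $K$, contradicting lightness; so the substitution fails, and some $q\in\tilde Q_i^*$ lies in or has a neighbour in $V(Q_j^K)$ for some $j\ne i$.

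The main obstacle is converting this conflict into a contradiction. My plan is to take the shortest prefix of $\tilde Q_i$ starting at $s_i$ that witnesses the conflict (extended by a single edge if the conflict is by adjacency rather than intersection) as an induced path $R$ from $V(P_i^K)$ to $V(P_j^K)$; the definition of $G_i$ and the minimality of the prefix ensure that $V(R)$ is disjoint from and anticomplete to $V(P_k^K)$, and cleanness of $K$ gives that $R^*$ has no $K$-major vertex. Then Lemma~\ref{prismjump} applied to $R$ forces $2|E(R)|>1+\min(|E(P_i^K)|,|E(P_j^K)|)$. Combining this lower bound with $|E(\tilde Q_i)|\le|E(Q_i^K)|$, Lemma~\ref{longerprism} (at least two of $|E(P_i^K)|,|E(P_j^K)|,|E(P_k^K)|$ are $\ge\ell$), and the fact that $R$ is a prefix of $\tilde Q_i$, should produce the contradiction. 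The delicate bookkeeping is in the edge cases --- in particular when $|E(\tilde Q_i)|=|E(Q_i^K)|$ (so $\tilde Q_i$ is merely lexicographically earlier) and when the conflict happens near the middle of $\tilde Q_i$, both of which may require exploiting the prospect property (through the absence of a long ban-the-bomb or a long jewel) in addition to Lemma~\ref{prismjump}.
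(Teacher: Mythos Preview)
Your approach computes the missing middles $\tilde Q_i$ \emph{independently}, whereas the paper computes them \emph{sequentially}: first $M_1$ in $G_0$, then $M_2$ in $G_1:=G_0\setminus(\text{$M_1$ and its neighbours})$, then $M_3$ in $G_2$. This sequencing is not cosmetic; it is exactly what makes the appeal to Lemma~\ref{prismjump} go through, and your argument breaks without it.

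Here is the concrete failure. Suppose $K$ is clean and lightest with $|E(P_1)|\le |E(P_2)|\le |E(P_3)|$, and consider your $\tilde Q_3$. Nothing in your $G_3$ prevents $\tilde Q_3$ from meeting or touching the middle of $P_1$; if it does, your prefix path $R$ runs from $s_3\in V(P_3)$ to $V(P_1)$, and Lemma~\ref{prismjump} would require
\[
2|E(R)|\le 1+\min(|E(P_3)|,|E(P_1)|)=1+|E(P_1)|.
\]
But the only upper bound you have on $|E(R)|$ is in terms of $|E(\tilde Q_3)|\le |E(Q_3^K)|=|E(P_3)|-(\ell-2)$ (and you take only the $s_i$-prefix, so you do not even get the factor $\tfrac12$). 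When $|E(P_3)|$ is much larger than $|E(P_1)|$ --- which is entirely possible --- the required inequality simply fails, so Lemma~\ref{prismjump} gives no contradiction. The ``delicate bookkeeping'' you allude to cannot close this gap: it is an order-of-magnitude mismatch, not an off-by-one. (The same issue arises for $\tilde Q_2$ jumping to $P_1$.)

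The paper avoids this by processing in increasing order of $|E(P_i)|$ and deleting each recovered middle before computing the next. In $G_1$ the whole of $P_1$ has been removed, so any jump from a candidate $s_2t_2$-path must land in $P_3$, giving $\min(|E(P_2)|,|E(P_3)|)=|E(P_2)|$; and in $G_2$ both $P_1,P_2$ are gone, so a lighter $s_3t_3$-path can be substituted directly. To fix your proof you should adopt this sequential deletion (and, for the length bound, take the shorter of the $s_i$- and $t_i$-prefixes to the first conflict, so that $|E(R)|\le\lceil|E(\tilde Q_i)|/2\rceil+1$).
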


\begin{proof}
Here is the algorithm. Let the frame $F$ of $\mathcal{F}$ have bases $\{a_1,a_2,a_3\}$ and $\{b_1,b_2,b_3\}$, where the linear orders
of $\mathcal{F}$ are $a_1<a_2<a_3$ and $b_1<b_2<b_3$. For $1\le i\le 3$, let $s_i$ be the $a_i$-end of $F$ and let $t_i$ be the 
$b_i$-end of $F$, if they exist. (Certainly $s_2,t_2,s_3,t_3$ exist, but $s_1,t_1$ might not.)
Let $W_0$ be the set of all vertices of $G$ that are not ends of $F$, and belong to or have a neighbour in $F^*$, and let 
$G_0=G\setminus W_0$.
\begin{enumerate}[\bf Step 1:]
\item If $s_1$ is defined, compute the lightest $s_1t_1$-path $M_1$ in $G_0$ 
(if there is no such path, output ``failure'', that is, the desired near-prism does not exist, and stop).
If $s_1$ is not defined, let $M_1$
be the null graph.  In either case let $W_1$ be the set of vertices of $G_0$ that belong to or
have a neighbour in $V(M_1)$, and let $G_1=G_0\setminus W_1$.
\item Compute the lightest $s_2t_2$-path $M_2$ in $G_1$ 
(reporting failure if there is no such path).
Let $W_2$ be the set of vertices of $G_1$ that belong to or
have a neighbour in $V(M_2)$, and let $G_2=G_1\setminus W_2$.
\item Compute the lightest $s_3t_3$-path $M_3$ in $G_2$ (reporting failure if there is no such path).
\item Check whether $F\cup M_1\cup M_2\cup M_3$ is a long near-prism in $G$, and if so,
output that  fact and stop.
\end{enumerate}
This concludes the description. For running time, 
we just have to find the sets $W_0, W_1,W_2$, which take time $\mathcal{O}(|G|^2)$, and solve three lightest-path problems,
so the total running time is $\mathcal{O}(|G|^3)$.

To prove correctness: the positive output is clearly correct, but we need to check the negative output. Assume then
that there is a a long near-prism $K$ that is the lightest among all near-prisms, and it has
ordered frame $\mathcal{F}$, and is clean. Let its constituent paths be $P_1,P_2,P_3$, numbered according to $\mathcal{F}$,
and hence with $|E(P_1)|\le |E(P_2)|\le |E(P_3)|$.

We claim that in step 1 above, the algorithm will compute some $M_1$, and if $s_1,t_1$ exist then $M_1$ is the path 
$s_1\dd P_1\dd t_1$. To see this, the claim is true if $s_1,t_1$ do not exist, so we assume they do.
Then there is an $s_1t_1$-path in $G_0$, namely the path $s_1\dd P_1\dd t_1$, and so the algorithm
will not report failure in step 1, and so computes the lightest $s_1t_1$-path $M_1$
in $G_0$. But $s_1\dd P_1\dd t_1$ is a locally lightest $s_1t_1$-path
in $G_0$, because of
Lemma \ref{prismjump}, and therefore equals $M_1$.
This proves our claim.

Similarly in step 2, the algorithm computes $M_2$, and if $s_2,t_2$ exist then $M_2$ is the path 
$s_2\dd P_2\dd t_2$, because $s_2\dd P_2\dd t_2$ is locally lightest in $G_1$ (though not necessarily in $G_0$).
And in step 3 the algorithm computes $M_3$; and $F\cup M_1\cup M_2\cup M_3$ is a long near-prism, and the output is correct.
This proves Theorem \ref{thm:detectcleanprism}.

\end{proof}

When the algorithm of Theorem \ref{thm:detectcleanprism} finds a long near-prism $K$, it is tempting to claim that $K$ is the lightest
long near-prism with ordered frame $\mathcal{F}$. But that might not be true; perhaps some vertices of $K$ are $K'$-major, where $K'$
is the lightest long near-prism with ordered frame $\mathcal{F}$, and then the algorithm might find $K$ instead of $K'$.

\section{Major vertices on near-prisms}

In this section we prove some properties of $K$-major vertices, when $K$ is a shortest long near-prism.
If $K$ is a long near-prism with constituent paths $P_1,P_2,P_3$, and each $P_i$ has ends $a_i, b_i$
as usual, and $x$ is $K$-major with a neighbour in $V(P_i)$, we define 
$\alpha_i(x)$ to be the neighbour $v$ of $x$ in $V(P_i)$ such that the path $v\dd P_i\dd a_i$ is minimal; and define $\beta_i(x)$
to be the neighbour $v$ of $x$ in $V(P_i)$ such that the path $v\dd P_i\dd b_i$ is minimal.
We begin with some lemmas:

\begin{lem} \label{2paths}
Let $K$ be a tidy shortest long near-prism in a graph $G$. If $x$ is a $K$-major vertex, 
then $x$ has neighbours in at least two constituent paths of $K$.
\end{lem}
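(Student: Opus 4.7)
The plan is to argue by contradiction. Suppose $x$ is a $K$-major vertex whose neighbours in $V(K)$ all lie in a single constituent path, say $P_1$. I will reroute $P_1$ through $x$ to build a strictly shorter long near-prism $K'$, contradicting the minimality of $K$.

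Because $K$ is tidy, $x$ is anticomplete to $F_K^*$. If $P_1$ has length at most $\ell-1$ then $F_1=P_1$, so $V(P_1)\subseteq F_K^*$ and $x$ has no neighbour in $V(K)$ at all, contradicting $K$-majority. Hence $|E(P_1)|\geq\ell$, $F_1$ is split into two paths of length $\ell/2-1$ based at $a_1$ and $b_1$, and the $P_1$-neighbours of $x$ all lie in the middle subpath $s_1\dd P_1\dd t_1$, which has length $|E(P_1)|-\ell+2$. Let $\alpha=\alpha_1(x)$ and $\beta=\beta_1(x)$. Every $K$-neighbour of $x$ lies on $\alpha\dd P_1\dd\beta$, so if $d_{P_1}(\alpha,\beta)\leq 2$ then a three-vertex subpath of $K$ would cover all neighbours of $x$, contradicting $K$-majority. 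Thus $d_{P_1}(\alpha,\beta)\geq 3$, and combining with $d_{P_1}(\alpha,\beta)\leq|E(P_1)|-\ell+2$ also forces $|E(P_1)|\geq\ell+1$.

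Now set $P_1'=a_1\dd P_1\dd\alpha\dd x\dd\beta\dd P_1\dd b_1$. The minimality of $\alpha,\beta$ (together with $P_1$ being induced) makes $P_1'$ an induced path; and since $x$ is anticomplete to $V(P_2)\cup V(P_3)$ by hypothesis, replacing $P_1$ by $P_1'$ and keeping the same two triangles yields an induced near-prism $K'$ with constituent paths $P_1',P_2,P_3$. Its new constituent path has length $|E(P_1')|=|E(P_1)|-d_{P_1}(\alpha,\beta)+2\geq\ell$, so every hole of $K'$ has length at least $\ell$, i.e.\ $K'$ is a long near-prism; and $|V(K')|<|V(K)|$ because $d_{P_1}(\alpha,\beta)\geq 3$. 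This contradicts $K$ being a shortest long near-prism. The crucial numerical step is the bound $|E(P_1')|\geq\ell$, which rests on tidiness pinning $\alpha,\beta$ into the middle segment $s_1\dd P_1\dd t_1$ of length exactly $|E(P_1)|-\ell+2$; the rest is routine bookkeeping to verify that $P_1'$ is induced and that no new chords arise in $K'$.
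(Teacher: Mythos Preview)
Your proof is correct and follows the same approach as the paper's: both reroute $P_1$ through $x$ to obtain a strictly shorter long near-prism, contradicting minimality. The only cosmetic difference is that the paper verifies that $K'$ is long by noting it contains the same frame $F_K$ (tidiness guarantees the discarded interior of $\alpha\dd P_1\dd\beta$ lies outside $V(F_K)$), whereas you unpack this into the explicit bound $|E(P_1')|\ge\ell$; these are the same observation phrased two ways.
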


\begin{proof}
In the usual notation, suppose that all neighbours of $x$ in $V(K)$ are contained in $V(P_1)$, say; so $\alpha_1(x)\dd P_1\dd \beta_1(x)$ has length 
strictly greater than two. We obtain a near-prism $K'$ shorter than $K$ by replacing $\alpha_1(x)\dd P_1\dd \beta_1(x)$ in $P_1$ with the 
path $\alpha_1(x)\dd x\dd \beta_1(x)$. Since $K'$ contains the same frame as $K$, it follows that $K'$ is a long near-prism, 
a contradiction. This proves Lemma \ref{2paths}.
\end{proof}

\begin{lem}\label{notwohat}
Let $K$ be a tidy shortest long near-prism in a graph $G$, with constituent paths $P_1,P_2,P_3$. For all distinct $i,j\in \{1,2,3\}$, if
$x$ is a $K$-major vertex
with no neighbours in $V(P_j)$, then $x$ either has exactly one neighbour in $V(P_i)$, or two nonadjacent neighbours in $V(P_i)$.
\end{lem}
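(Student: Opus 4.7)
My plan is to proceed by contradiction. By Lemma~\ref{2paths}, since $x$ has no neighbour in $V(P_j)$, it must have neighbours in both $V(P_i)$ and $V(P_k)$; in particular $x$ has at least one neighbour in $V(P_i)$. Fix $w_a=\alpha_k(x)$ and $w_b=\beta_k(x)$. Suppose for contradiction that the conclusion fails. Then either (i) $x$ has exactly two neighbours on $V(P_i)$ that are consecutive on $P_i$, or (ii) $x$ has at least three neighbours on $V(P_i)$.

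In case (i), let $u=\alpha_i(x)$ and $v=\beta_i(x)$, so that $\{u,v,x\}$ forms a triangle. The plan is to build a new long near-prism $K'$ that is strictly shorter than $K$, contradicting shortestness. I would use $\{u,v,x\}$ as one triangle of $K'$ and one of the two original triangles of $K$ as the other, choosing the side so that the shortcut edge $xw_a$ (or $xw_b$) is actually reducing the vertex count. For instance, keeping $\{b_1,b_2,b_3\}$, the three constituent paths of $K'$ are $u\dd P_i\dd a_i\dd a_j\dd P_j\dd b_j$, $v\dd P_i\dd b_i$, and $x\dd w_b\dd P_k\dd b_k$; one checks these are pairwise internally disjoint and induced in $G$ (using that $x$ has no neighbour on $V(P_j)$, that $u,v$ are the extremal neighbours on $P_i$, and that $w_b$ is the extremal neighbour on $P_k$). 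A direct vertex count yields $|V(K')|=|V(K)|-d_{P_k}(w_b,a_k)+1$, so $K'$ is strictly shorter when $w_b$ is at $P_k$-distance at least $2$ from $a_k$; the symmetric construction keeping $\{a_1,a_2,a_3\}$ works when $w_a$ is far from $b_k$. By Lemma~\ref{longerprism} at least two of $P_1,P_2,P_3$ have length $\geq\ell$, so $|E(P_k)|$ is large enough that at least one of the two constructions gives a strict reduction.

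In case (ii), pick three consecutive $x$-neighbours $u_1,u_2,u_3$ on $V(P_i)$. If some $u_m,u_{m+1}$ are $P_i$-adjacent we are back in case (i). Otherwise, the three paths $x\dd u_2$, $x\dd u_1\dd P_i\dd u_2$, and $x\dd u_3\dd P_i\dd u_2$ have pairwise disjoint interiors and form an induced theta from $x$ to $u_2$ (using that $x$ has no neighbour on $V(P_j)$ and that the $u_m$ are consecutive $x$-neighbours on the induced path $P_i$). If all three derived holes have length $\geq\ell$, this is a long theta, contradicting the prospect hypothesis; otherwise one of the segments $u_m\dd P_i\dd u_{m+1}$ is short, and combining that short segment with a long detour through $P_j$ or $P_k$ produces either a long jewel of bounded order or a short long even hole, again contradicting the prospect hypothesis.

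The main obstacle is the verification in case (i) that the constructed $K'$ is actually \emph{long}, not merely a near-prism: one of the new derived holes can become short when $u$ or $w_b$ happens to be close to a base triangle of $K$. In each such degenerate sub-configuration, however, the short new hole together with the surviving long original hole of $K$ contains either a long jewel of order at most $\ell+1$, a long ban-the-bomb, or a long even hole of length at most $2\ell$, each of which is excluded because $G$ is a prospect; this gives the required contradiction and completes the case analysis.
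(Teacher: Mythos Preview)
Your case~(i) construction is essentially the paper's proof, but the proposal as written has real errors.

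First, the case split is wrong. In this paper ``two nonadjacent neighbours'' means ``there exist two neighbours that are nonadjacent'', not ``exactly two neighbours, and they happen to be nonadjacent''. Since $P_i$ is a path, any three of its vertices contain a nonadjacent pair, so if $x$ has at least three neighbours in $V(P_i)$ the conclusion of the lemma already holds. Thus the only case to rule out is that $\alpha_i(x),\beta_i(x)$ are distinct and adjacent; your case~(ii) is vacuous and should be deleted. (This matters, because your case~(ii) argument is also invalid: see the next point.)

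Second, the hypotheses of Lemma~\ref{notwohat} do \emph{not} include that $G$ is a prospect. You therefore may not invoke Lemma~\ref{longerprism}, and your final paragraph, which disposes of the ``degenerate sub-configurations'' by appealing to the prospect hypothesis (no long jewel, no long ban-the-bomb, no short long even hole), is illegitimate. The same objection kills your case~(ii) argument.

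Fortunately none of that machinery is needed: tidiness alone does all the work. Since $x\notin V(K)$ and $K$ is tidy, $x$ has no neighbour in $F_K^*$; it follows that $P_i$ and $P_k$ each have length at least $\ell$ and that every neighbour of $x$ on $P_i$ or $P_k$ lies at distance at least $\ell/2-1\ge 2$ from both ends of its constituent path. With $u=\alpha_i(x)$, $v=\beta_i(x)$, $w_b=\beta_k(x)$ this gives $d_{P_k}(a_k,w_b)\ge \ell/2-1\ge 2$, so your $K'$ is strictly shorter; and the same distance bounds show that each of the three holes of $K'$ has length at least $2(\ell/2-1)+3\ge\ell$ (one of them is in fact the original hole $P_i\cup P_j$ of $K$), so $K'$ is long. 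No appeal to Lemma~\ref{longerprism}, no symmetric second construction, no degenerate sub-cases.

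The paper's proof is exactly this single construction (it keeps the $a$-triangle and uses $\alpha_k(x)$ rather than $\beta_k(x)$, which is immaterial), with no case analysis.
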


\begin{proof}

Suppose that $x$ has no neighbour in $V(P_3)$, and $\alpha_1(x), \beta_1(x)$ are distinct and adjacent, say.
Then there is a long prism with bases $\{a_1,a_2,a_3\}$ and
$\{x,\alpha_1(x), \beta_1(x)\}$ and constituent paths 
$$a_1\dd P_1\dd \alpha_1(x),$$
$$a_2\dd P_2\dd \alpha_2(x),$$
$$a_3\dd P_3\dd b_3\dd b_1\dd P_1\dd \beta_1(x),$$ 
and it is shorter than $K$, a contradiction. This proves Lemma \ref{notwohat}.

\end{proof}

\begin{lem}\label{3pairwisenonadjacent}
Let $G$ be a graph with no long theta, and let $K$ be a tidy shortest long near-prism in $G$.
If $x$ is a $K$-major vertex, then $x$ has three pairwise non-adjacent neighbours in $V(K)$.
\end{lem}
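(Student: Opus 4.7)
The plan is to split on how many of $K$'s three constituent paths meet $N(x)$; by Lemma~\ref{2paths} at least two do. My first step is to extract a distance bound from the tidiness of $K$: by Lemma~\ref{longerprism} at most one $P_i$ has length less than $\ell$, and for each $P_i$ of length at least $\ell$ the corresponding $F_i$ in $F_K$ consists of two length-$(\ell/2-1)$ end-segments whose far-from-$\{a_i,b_i\}$ endpoints are the ends of $F_K$ lying on $P_i$. Since tidy means $F_K^*$ is anticomplete to $V(G)\setminus V(K)$, on every such $P_i$ the neighbours of $x$ in $V(P_i)$ must lie at $P_i$-distance at least $\ell/2-1$ from both $a_i$ and $b_i$; on the possibly-exceptional $P_i$ of length $<\ell$ (where $F_i=P_i$, so $F_i^*=V(P_i)$) $x$ has no neighbour at all. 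Because $\ell\ge 6$, no neighbour of $x$ in $V(K)$ lies in $V(A)\cup V(B)$.

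Suppose first that $x$ has a neighbour on each of $P_1,P_2,P_3$. Then each $F_i$ is two-path, so picking one neighbour per $P_i$ yields three vertices lying in pairwise disjoint path-interiors and outside both base triangles; they are pairwise non-adjacent in the induced $K$, and I am done. Otherwise I may assume $N(x)\cap V(P_3)=\emptyset$ and $N_i:=N(x)\cap V(P_i)\neq\emptyset$ for $i=1,2$ (so $F_1,F_2$ are two-path). Applying Lemma~\ref{notwohat} with $(i,j)=(1,3)$ and $(2,3)$ forces $|N_1|,|N_2|\leq 2$ with the two vertices non-adjacent whenever $|N_i|=2$. If $|N_1|+|N_2|\geq 3$, I will select three pairwise non-adjacent neighbours from $N_1\cup N_2$: within each $N_i$ non-adjacency is Lemma~\ref{notwohat}, and across $N_1$ and $N_2$ the distance bound places every vertex strictly inside its own $P_i$ away from the triangles, so no cross-pair is adjacent.

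The main obstacle is the remaining subcase $|N_1|=|N_2|=1$, say $N_1=\{u\}$ and $N_2=\{v\}$, where $x$ has only two neighbours in $V(K)$ and the conclusion cannot hold directly; here the plan is to derive a contradiction by producing a long theta. First I would observe that $u,v$ are non-adjacent: the distance bound puts both outside $V(A)\cup V(B)$, so they cannot be joined by a triangle edge (equivalently, if they were adjacent then $\{u,v\}$ would sit inside a three-vertex induced path of $K$, contradicting $K$-majority of $x$). The hole $H_{12}$ induced on $V(P_1\cup P_2)$ is long (since $K$ is long) and chordless (since $K$ is induced), and its two arcs from $u$ to $v$---one through $a_1a_2$ and the other through $b_1b_2$---each have length at least $(\ell/2-1)+1+(\ell/2-1)=\ell-1$ by the distance bound. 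I will combine these arcs with the two-edge path $u\dd x\dd v$: the resulting three $uv$-paths have pairwise disjoint and pairwise anticomplete interiors (chordlessness of $H_{12}$ handles the arcs and $N(x)\cap V(K)=\{u,v\}$ handles the middle path), so they form a theta, and every pairwise union is a hole of length at least $\ell+1$. Hence the theta is long, contradicting the prospect hypothesis that $G$ has no long theta and completing the proof.
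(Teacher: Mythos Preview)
Your proof is correct and follows essentially the same route as the paper's: reduce to the case where $x$ sees exactly two constituent paths, use Lemma~\ref{notwohat} for within-path structure, and build a long theta when $x$ has a single neighbour on each. You are considerably more explicit than the paper about the distance bound coming from tidiness---the paper leaves both the cross-path non-adjacency and the lengths of the theta's three holes implicit---and that extra care is justified.

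One correction: Lemma~\ref{notwohat} does not give $|N_i|\le 2$. Its proof only rules out $\alpha_i(x)$ and $\beta_i(x)$ being distinct and adjacent; $x$ may have further neighbours on $P_i$ strictly between them. This does not break your argument---when $|N_1|+|N_2|\ge 3$ you can take $\alpha_i(x),\beta_i(x)$ from whichever $N_i$ has size at least two together with any vertex of the other $N_j$, and your distance bound still handles the cross-pair---but the sentence ``within each $N_i$ non-adjacency is Lemma~\ref{notwohat}'' overstates what that lemma actually delivers. Also a terminological nit: the hypothesis here is ``no long theta'', not ``prospect''.
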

\begin{proof}
Suppose not.
By Lemma \ref{2paths}, in the usual notation we may assume $x$ has a neighbour in $V(P_1)$ and a neighbour in $V(P_2)$, and we may assume that 
$x$ has no neighbours in $V(P_3)$. If $x$ has exactly one neighbour in $V(P_1)$ and exactly one neighbour in $V(P_2)$,
then $V(P_1 \cup P_2) \cup \{x\}$ induces a long theta. 
%2\frame +3 \ge \ell FRAME
So by Lemma \ref{notwohat} we may assume that $x$ has two nonadjacent 
neighbours in $V(P_1)$; but then the claim is true.
This proves Lemma \ref{3pairwisenonadjacent}.
\end{proof}

\begin{lem}\label{notwotwos}
Let $G$ be a graph with no long theta, and let $K$ be a tidy shortest long near-prism in $G$, with constituent paths $P_1,P_2,P_3$. 
Let $x, y$ be nonadjacent $K$-major vertices. If $i,j\in \{1,2,3\}$, and $x$ has no neighbours in $V(P_i)$ and $y$ has no neighbours in $V(P_j)$
then $i = j$.
\end{lem}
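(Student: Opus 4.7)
I would argue by contradiction: suppose $i \neq j$, and by symmetry take $i=1$ and $j=2$, so $x$ has no neighbour in $V(P_1)$ and $y$ has no neighbour in $V(P_2)$. The plan is to build a long theta in $G$ with branch vertices $x,y$, contradicting the hypothesis. First I would combine the earlier lemmas: Lemma \ref{2paths} forces $x$ to have neighbours in both $V(P_2)$ and $V(P_3)$, and $y$ in both $V(P_1)$ and $V(P_3)$; Lemma \ref{notwohat} says each such set of neighbours consists of one vertex or of two nonadjacent vertices on the corresponding $P_k$; and Lemma \ref{3pairwisenonadjacent} requires $|N_{V(P_2)}(x)|+|N_{V(P_3)}(x)|\ge 3$ and similarly for $y$. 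Tidiness is essential here: since $x,y$ have no neighbours in $F_K^*$, each $F_i$ is forced to be of the ``two disjoint half-paths'' type, so each $P_i$ has length at least $\ell-1$, $K$ is actually a prism, and the neighbours of $x,y$ in $V(P_i)$ lie in its middle at distance $\ge \ell/2-1$ from each of $a_i,b_i$.

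In the principal case $|N_{V(P_2)}(x)|=|N_{V(P_1)}(y)|=2$, label $N_{V(P_2)}(x)=\{u,v\}$ (ordered along $P_2$ from $a_2$) and $N_{V(P_1)}(y)=\{p,q\}$ (from $a_1$). I would set
\[
R_A=x\dd u\dd P_2\dd a_2\dd a_1\dd P_1\dd p\dd y,\qquad R_B=x\dd v\dd P_2\dd b_2\dd b_1\dd P_1\dd q\dd y,
\]
and take $R_C=x\dd n\dd P_3\dd m\dd y$ for a pair $(n,m)\in N_{V(P_3)}(x)\times N_{V(P_3)}(y)$ chosen to be consecutive (one from each set) in the linear order of $N_{V(P_3)}(x)\cup N_{V(P_3)}(y)$ along $P_3$; such a pair exists because both sets are nonempty, and then the subpath $n\dd P_3\dd m$ contains no further neighbour of $x$ or $y$. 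Using the tidy distance bounds, $|R_A|,|R_B|\ge\ell+1$ and $|R_C|\ge 2$; pairwise anticompleteness of interiors follows because the only $K$-edges between distinct $V(P_i)$'s are the six triangle edges, and by tidiness none of their endpoints is adjacent to $x$ or $y$, nor do the chosen neighbours lie at $a_i,b_i$. Each union of two of the three paths is then a long hole, yielding the long theta.

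The remaining case (by symmetry, $|N_{V(P_2)}(x)|=1$) forces $|N_{V(P_3)}(x)|=2$. Writing $u$ for $x$'s unique $P_2$-neighbour and $n_1,n_2$ for its two nonadjacent middle-of-$P_3$ neighbours, I would redistribute the theta: route two of the three paths through $V(P_3)$ via $n_1$ and $n_2$ respectively, and the third through $u$ across either the $a$- or $b$-triangle into $V(P_1)$; if $y\sim n_i$ for some $i$, that path collapses to $x\dd n_i\dd y$ of length $2$. The main obstacle is the sub-case analysis: depending on $|N_{V(P_1)}(y)|\in\{1,2\}$ and on how the at-most-four vertices of $N_{V(P_3)}(x)\cup N_{V(P_3)}(y)$ interleave (or coincide) along $P_3$, the natural pairing may fail induced-ness (via an $x$-$x$ or $y$-$y$ chord between the two $P_3$-subpaths) or create a triangle chord in the theta; one must then select the $a$- or $b$-side for the $P_2\to P_1$ path so that the unused triangle vertices stay outside the theta. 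A bounded case check covers all configurations, and in each sub-case the tidy lower bounds again give pairwise length-sums at least $\ell$, producing the long theta and completing the contradiction.
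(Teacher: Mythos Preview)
Your principal case is correct and is exactly the paper's argument up to relabeling: one theta path through the common constituent path (your $P_3$, the paper's $P_2$) and two paths through the other two constituent paths joined via the $a$- and $b$-triangles. The paper simply does this uniformly: it lets $M$ be any induced $xy$-path with interior in the common constituent path, notes that by Lemma~\ref{notwohat} the pair $\alpha_1(x),\beta_1(x)$ is equal or nonadjacent and likewise $\alpha_3(y),\beta_3(y)$, and observes that in each of the four resulting cases the subgraph induced on $V(M)$ together with the four half-paths $\alpha_1(x)\dd P_1\dd a_1$, $\beta_1(x)\dd P_1\dd b_1$, $\alpha_3(y)\dd P_3\dd a_3$, $\beta_3(y)\dd P_3\dd b_3$ is a long theta.

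Your separate ``remaining case'' is therefore an unnecessary detour, and it is also the one place where your write-up has an actual gap (you defer to an unspecified ``bounded case check''). In fact your own principal construction already covers it: when $|N_{V(P_2)}(x)|=1$, so $u=v$, just shift the branch vertex on the $x$-side from $x$ to $u$, taking the three paths $u\dd P_2\dd a_2\dd a_1\dd P_1\dd p\dd y$, $u\dd P_2\dd b_2\dd b_1\dd P_1\dd q\dd y$, and $u\dd x\dd n\dd P_3\dd m\dd y$; the tidiness bounds still make every pair of these sum to at least $\ell$. If $|N_{V(P_1)}(y)|=1$ as well, shift the other branch vertex to $p=q$ analogously. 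That is precisely how the paper absorbs all four cases in one sentence; there is no need to reroute two paths through $P_3$. One small misreading to fix: Lemma~\ref{notwohat} does not say the neighbour set has size exactly one or two, only that $\alpha_i(x)=\beta_i(x)$ or $\alpha_i(x),\beta_i(x)$ are nonadjacent; this is harmless for your construction once you take $u=\alpha_2(x)$ and $v=\beta_2(x)$.
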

\begin{proof}
Suppose that $x$ has no neighbours in $V(P_3)$, and $y$ has no neighbours in $V(P_1)$, say. Then $x$ has neighbours in $V(P_1)$ and 
in $V(P_2)$, and $y$ has neighbours in $V(P_2)$ and $V(P_3)$ by Lemma \ref{2paths}. Let $M$ be an induced $xy$-path with interior 
in $V(P_2)$. By Lemma \ref{notwohat}, $\alpha_1(x), \beta_1(x)$ are either equal or nonadjacent, and $\alpha_3(y), \beta_3(y)$
are either equal or nonadjacent.
Thus there are four cases, but in each case there is a long theta induced on the union of the vertex sets of the paths
$\alpha_1(x)\dd P_1\dd a_1$, $\beta_1(x)\dd P_1\dd b_1$, $\alpha_3(y)\dd P_3\dd a_3$, $\beta_3(y)\dd P_3\dd b_3$ and $M$, a contradiction.
This proves Lemma \ref{notwotwos}.

\end{proof}

We need some more definitions. 
Let $K$ be a  tidy shortest long near-prism in $G$.
For $v\in V(K)\setminus F_K^*$ and integers $m,n\ge 0$, we define the path $K^m_n(v)$ as follows. In the usual notation, 
let $v\in V(P_i)$ say.
Let $M$ be the maximal subpath of the path $v\dd P_i\dd a_i$ that has one end $v$ and has length at most $m$,
and has no internal vertex in $F_K^*$. (Thus, $M$ is permitted to have an end in $F_K^*$, but no more.) Let $N$ be
the maximal subpath of the path $v\dd P_i\dd b_i$ that has one end $v$ and has length at most $n$,
and has no internal vertex in $F_K^*$; and let $K^m_n(v)=M\cup N$.

Also, if $x$ is $K$-major, then for $1\le i\le 3$, 
if $x$ has a neighbour in $V(P_i)$ let
$A_i(x)$ be the vertex set of the path $\alpha_i(x)\dd P_i\dd a_i$, and if $x$ has no such neighbour let $A_i(x)=V(P_i)$.
For $i,j\in \{1,2,3\}$, let $A_{i,j}(x) = A_i(x)\cup A_j(x)$, and let $A_{1,2,3}(x) = A_1(x)\cup A_2(x)\cup A_3(x)$.
If $x,y$ are $K$-major, we say that $y$
is 
{\em distant} from $x$ if 
\begin{itemize}
\item $x,y$ are nonadjacent, and  $y$ has a neighbour in $A_{1,2,3}(x)$;
\item for $1\le i\le 3$, if $x$ has a neighbour in $V(P_i)$, then $y$ has no neighbour in 
$V(K^{\ell-2}_1(\alpha_i(x)))$;  and
\item for $1\le i\le 3$, if $x$ has no neighbour in $V(P_i)$, then for some $j\in \{1,2,3\}\setminus \{i\}$, $y$ has no neighbour in 
$V(K^0_{\ell-3}(\beta_j(x)))$.
\end{itemize}

We need to prove some properties of distant pairs. 

\begin{lem}\label{distant}
Let $G$ be a graph with no long theta, and let $K$ be a tidy shortest long near-prism in $G$, with constituent paths $P_1,P_2,P_3$.
Let $x,y$ be $K$-major, where $y$ is distant from $x$. Then $y$ has exactly two neighbours in $A_{1,2,3}(x)$ and they are adjacent.
\end{lem}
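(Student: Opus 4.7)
The plan is to argue by contradiction. Suppose that $y$ has fewer than two, more than two, or exactly two but nonadjacent neighbours in $A_{1,2,3}(x)$; in each case I aim to construct a long theta in $G$, contradicting the fact that $G$ is a prospect.

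The key preliminary is a ``long route'' observation: for any neighbour $u$ of $y$ in $A_i(x)$, if $x$ has a neighbour on $P_i$ then the subpath $\alpha_i(x) \dd P_i \dd u$ avoids $K^{\ell-2}_1(\alpha_i(x))$ by the first clause of the definition of distant, so it has length at least $\ell-1$ (or passes through a vertex of $F_K^*$, which still yields the corresponding length bound via the structure of the frame). Consequently $y \dd u \dd P_i \dd \alpha_i(x) \dd x$ is an induced $yx$-path of length at least $\ell$. When $x$ has no neighbour on $P_i$, the second clause of the distant definition gives an analogous long route via the $b$-triangle using the $K^0_{\ell-3}(\beta_j(x))$-avoidance. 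The constants $\ell-2$ and $\ell-3$ appearing in the definition of distant are precisely what is needed to make these induced paths reach length $\ell$.

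If $y$ has two nonadjacent neighbours $u \in A_i(x)$, $v \in A_j(x)$, or three or more neighbours in $A_{1,2,3}(x)$ (from which three pairwise nonadjacent ones can be extracted using Lemma~\ref{notwohat} for the structure of $y$'s neighbours on each constituent path), the long-route construction yields two internally disjoint induced $yx$-paths of length at least $\ell$. A third internally disjoint $yx$-path, needed to complete a theta, is built from a third pairwise-nonadjacent neighbour of $y$ in $V(K)$ (Lemma~\ref{3pairwisenonadjacent}), routed either through a third $A_k$ or through the $b$-triangle. If instead $y$ has a unique neighbour $u \in A_{1,2,3}(x)$, the other two pairwise-nonadjacent neighbours of $y$ lie outside $A_{1,2,3}(x)$; combined with $u$, they give three internally disjoint $yx$-paths---one through $u$, two ``the long way around''---yielding a long theta in the same manner.

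The main obstacle is the subcase analysis when $x$ has neighbours on only two constituent paths, since then one $\alpha_i(x)$ is undefined and $A_k(x) = V(P_k)$ for the remaining $k$; the third theta-path must be routed via the $b$-triangle, and the second distant clause involving $K^0_{\ell-3}(\beta_j(x))$ is exactly what keeps this routing long enough. Verifying that all three paths are induced and internally disjoint in each subcase will rely on tidiness of $K$, Lemma~\ref{notwohat} to constrain the positions of $y$'s neighbours along a single $P_i$, and the pairwise-nonadjacency guarantee of Lemma~\ref{3pairwisenonadjacent}.
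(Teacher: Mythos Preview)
Your high-level plan---build three internally-disjoint $xy$-paths and exhibit a long theta---is the same skeleton the paper uses, and your ``long route'' observation for the $K^{\ell-2}_1(\alpha_i(x))$ clause is exactly right. But the proposal glosses over the part that carries the real weight: the subcase analysis when $x$ misses a constituent path. The paper spends four numbered steps on this case, and one of them does \emph{not} reduce to a long theta.

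Concretely, in the situation where $x$ has no neighbour in $V(P_1)$, $y$ has a neighbour in $V(P_1)=A_1(x)$, and $y$ has two \emph{adjacent} neighbours $u,v$ in $A_2(x)$ (so at least three neighbours in $A_{1,2,3}(x)$ in total), the paper does not find a long theta. Instead it builds a long prism with bases $\{a_1,a_2,a_3\}$ and $\{y,u,v\}$ and shows it is \emph{shorter} than $K$, invoking the ``shortest'' part of the hypothesis. Your plan commits to producing a long theta in every bad case; in this subcase the three obvious $xy$-routes (one through $A_2(x)$, two through $P_1$ and then around via $P_3$) need not be pairwise anticomplete, because the two $P_3$-segments collide when $\alpha_3(x),\beta_3(x)$ are equal or adjacent, and the two $P_1$-segments collide when $\alpha_1(y),\beta_1(y)$ are equal or adjacent. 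Untangling all four combinations is possible but nontrivial, and you have not indicated how. Similarly, your appeal to Lemma~\ref{notwohat} to ``extract three pairwise nonadjacent'' neighbours of $y$ from $A_{1,2,3}(x)$ is misplaced: that lemma constrains neighbours in a full $V(P_i)$, not in the truncated set $A_i(x)$.

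Two smaller points: the hypothesis gives only ``no long theta'' (plus $K$ tidy and shortest), not that $G$ is a prospect; and the role of the $K^0_{\ell-3}(\beta_j(x))$ clause is more specific than your sketch suggests---it is used only in the final paragraph of the paper's proof to force the path $R_1$ (through the $b$-triangle) to be long, not as a general-purpose ``long route'' analogue.
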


\begin{proof}
We begin with:
\\
\\
(1) {\em If $x$ has a neighbour in each of $V(P_1), V(P_2), V(P_3)$ then the theorem holds.}
\\
\\
Suppose that $x$ has a neighbour in each of $V(P_1), V(P_2), V(P_3)$. If $y$ has a neighbour in each of $A_1(x)$, $A_2(x)$, $A_3(x)$,
there is a long theta formed by three $xy$-paths  with interiors in $A_1(x), A_2(x), A_3(x)$, a contradiction. So we may assume
that $y$ has no neighbour in $A_3(x)$. Suppose that $y$ also has no neighbour in $A_2(x)$. By Lemma \ref{2paths}, 
$y$ has a neighbour in one of $V(P_2), V(P_3)$, say $V(P_2)$. Let $M$ be an induced $xy$-path with interior in $V(P_2)$.
If $y$ has a unique neighbour in $A_1(x)$, there is a long theta induced
on $A_1(x)\cup A_3(x)\cup V(M)$. If $y$ has two nonadjacent neighbours in $A_1(x)$, there is an induced $\alpha_1(x)a_1$-path $R$ with
interior in $A_1(x)\cup \{y\}$ containing $y$, and then there is a long theta induced on $V(R)\cup A_3(x)\cup V(M)$, a contradiction.
So $y$ has exactly two adjacent neighbours in $A_1(x)$ and the theorem holds. 

\begin{figure}[H]
\centering
\begin{tikzpicture}[scale=.8,auto=left]
\def\t{.5}
\tikzstyle{every node}=[inner sep=1.5pt, fill=black,circle,draw]
\node (a2) at (-5,0) {};
\node (a1) at (-6,2) {};
\node (a3) at (-6,-2) {};
\node (b2) at (5,0) {};
\node (b1) at (6,2) {};
\node (b3) at (6,-2) {};
\node (x) at (0,.7) {};
\node (c2) at (-2,0) {};
\node (c1) at (-2,2) {};
\node (c3) at (-2,-2) {};
\node (d2) at (2,0) {};
\node (d1) at (2,2) {};
\node (d3) at (2,-2) {};

\foreach \from/\to in {a1/a2,a1/a3,a2/a3,b1/b2,b1/b3,b2/b3,x/c1,x/c2,x/c3,x/d1,x/d2,x/d3}
\draw [-] (\from) -- (\to);
\foreach \from/\to in {a1/b1,a2/b2,a3/b3}
\draw [dashed] (\from) -- (\to);
\draw (x) -- ({\t*cos(60)},{.7+\t*sin(60)});
\draw (x) -- ({\t*cos(90)},{.7+\t*sin(90)});
\draw (x) -- ({\t*cos(120)},{.7+\t*sin(120)});
\draw (x) -- ({\t*cos(240)},{.7+\t*sin(240)});
\draw (x) -- ({\t*cos(270)},{.7+\t*sin(270)});
\draw (x) -- ({\t*cos(300)},{.7+\t*sin(300)});

\tikzstyle{every node}=[]
\draw (x) node [left = 1mm]           {$x$};
\draw (a1) node [left]           {$a_1$};
\draw (a2) node [left]           {$a_2$};
\draw (a3) node [left]           {$a_3$};
\draw (b1) node [right]           {$b_1$};
\draw (b2) node [right]           {$b_2$};
\draw (b3) node [right]           {$b_3$};
\draw (c1) node [above]           {$\alpha_1(x)$};
\draw (c2) node [above]           {$\alpha_2(x)$};
\draw (c3) node [below]           {$\alpha_3(x)$};
\draw (d1) node [above]           {$\beta_1(x)$};
\draw (d2) node [above]           {$\beta_2(x)$};
\draw (d3) node [below]           {$\beta_3(x)$};
\node (A1) at (-4,2.3) {$A_1(x)$};
\node (A2) at (-3.5,.3) {$A_2(x)$};
\node (A3) at (-4,-1.7) {$A_3(x)$};

\end{tikzpicture}

\caption{$x$ has a neighbour in each of $P_1,P_2,P_3$ (possibly $\alpha_i(x)=\beta_i(x)$).} \label{fig:step1}
\end{figure}
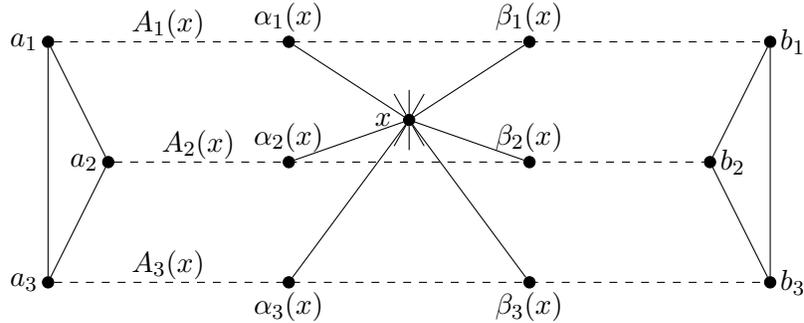

So we may assume that $y$ has a neighbour in $A_1(x)$ and in $A_2(x)$, 
and not in $A_3(x)$. If $y$ has two nonadjacent neighbours in $A_1(x)$, or two nonadjacent neighbours in $A_2(x)$, there
is a long theta formed by three $xy$-paths all with interior in $A_{1,2,3}(x)$, a contradiction. So by Lemma 
\ref{3pairwisenonadjacent},there exists $i\in \{1,2,3\}$ such that $y$ has a neighbour in $V(P_i)\setminus A_i(x)$. This neighbour is
nonadjacent to $\alpha_1(x),\alpha_2(x)$, obviously if $i = 3$ and from the definition of ``distant'' if $i \in \{1,2\}$.
Hence there is an induced $xy$-path $R$
with interior in $(V(P_i)\setminus A_i(x))\cup V(P_3)$, containing no neighbour of $\alpha_1(x)$ or $\alpha_2(x)$.
But then there is a long theta formed by the path $R$ and two $xy$-paths with interiors in $A_1(x), A_2(x)$ respectively, 
a contradiction. This proves (1).

\bigskip

We may therefore assume that $x$ has no neighbour 
in $V(P_1)$, and hence $A_1(x)=V(P_1)$. By Lemma \ref{3pairwisenonadjacent}, $x$ has two nonadjacent neighbours in one of $V(P_2), 
V(P_3)$, say in $V(P_j)$ where $j\in \{2,3\}$; thus $\alpha_j(x), \beta_j(x)$ are distinct and nonadjacent.
By Lemma \ref{notwotwos}, $y$ has a neighbour in $V(P_i)$ for $i=2,3$.
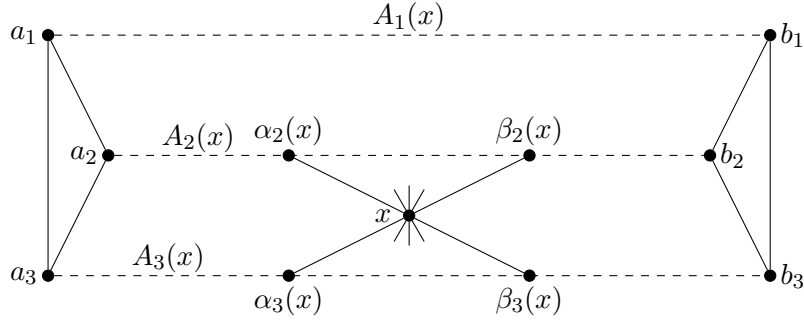
\begin{figure}[H]
\centering
\begin{tikzpicture}[scale=.8,auto=left]
\def\t{.5}
\tikzstyle{every node}=[inner sep=1.5pt, fill=black,circle,draw]
\node (a2) at (-5,0) {};
\node (a1) at (-6,2) {};
\node (a3) at (-6,-2) {};
\node (b2) at (5,0) {};
\node (b1) at (6,2) {};
\node (b3) at (6,-2) {};
\node (x) at (0,-1) {};
\node (c2) at (-2,0) {};
\node (c3) at (-2,-2) {};
\node (d2) at (2,0) {};
\node (d3) at (2,-2) {};

\foreach \from/\to in {a1/a2,a1/a3,a2/a3,b1/b2,b1/b3,b2/b3,x/c2,x/c3,x/d2,x/d3}
\draw [-] (\from) -- (\to);
\foreach \from/\to in {a1/b1,a2/b2,a3/b3}
\draw [dashed] (\from) -- (\to);
\draw (x) -- ({\t*cos(60)},{-1+\t*sin(60)});
\draw (x) -- ({\t*cos(90)},{-1+\t*sin(90)});
\draw (x) -- ({\t*cos(120)},{-1+\t*sin(120)});
\draw (x) -- ({\t*cos(240)},{-1+\t*sin(240)});
\draw (x) -- ({\t*cos(270)},{-1+\t*sin(270)});
\draw (x) -- ({\t*cos(300)},{-1+\t*sin(300)});

\tikzstyle{every node}=[]
\draw (x) node [left = 1mm]           {$x$};
\draw (a1) node [left]           {$a_1$};
\draw (a2) node [left]           {$a_2$};
\draw (a3) node [left]           {$a_3$};
\draw (b1) node [right]           {$b_1$};
\draw (b2) node [right]           {$b_2$};
\draw (b3) node [right]           {$b_3$};
\draw (c2) node [above]           {$\alpha_2(x)$};
\draw (c3) node [below]           {$\alpha_3(x)$};
\draw (d2) node [above]           {$\beta_2(x)$};
\draw (d3) node [below]           {$\beta_3(x)$};
\node (A1) at (-0,2.3) {$A_1(x)$};
\node (A2) at (-3.5,.3) {$A_2(x)$};
\node (A3) at (-4,-1.7) {$A_3(x)$};

\end{tikzpicture}

\caption{$x$ has no neighbour in $V(P_1)$.} \label{fig:step2}
\end{figure}
\noindent
(2) {\em $y$ does not have two nonadjacent neighbours in $A_2(x)\cup A_3(x)$.}
\\
\\
Suppose that it does; then there are two long $xy$-paths $R_1,R_2$, 
with $R_1^*, R_2^*\subseteq A_2(x)\cup A_3(x)$ and with $R_1^*$ anticomplete to $R_2^*$. 
If $y$
has a neighbour in $V(P_i)\setminus A_i(x)$ for some $i\in \{2,3\}$, 
this neighbour is nonadjacent to $\alpha_i(x)$ from the definition
of ``distant''; so if $y$
has a neighbour in $V(P_i)\setminus A_i(x)$ for some $i\in \{2,3\}$, or a neighbour in $V(P_1)$, there is an $xy$-path with 
interior in $V(K)\setminus (A_2(x)\cup A_3(x))$ with interior anticomplete to $R_1^*, R_2^*$, and these three paths form a long theta, 
a contradiction. So every neighbour of $y$ in $V(K)$ belongs to $A_2(x)\cup A_2(x)$. By Lemma \ref{2paths}, $y$
has a neighbour in $A_2(x)$ and one in $A_3(x)$, and by Lemma \ref{3pairwisenonadjacent} we may assume it has two nonadjacent neighbours
in $V(P_2)$; 
but then there is a long theta formed by the paths 
$$y\dd \beta_2(y)\dd P_2\dd \alpha_2(x)\dd x,$$ 
$$y\dd \beta_3(y)\dd P_3\dd \alpha_3(x)\dd x$$ 
$$y\dd \alpha_2(y)\dd P_2\dd a_2\dd a_1\dd P_1\dd b_1\dd b_j\dd P_j\dd \beta_j(x)\dd x,$$
a contradiction. This proves (2).
\\
\\
(3) {\em We may assume that $y$ has a neighbour in $V(P_1)$.}
\\
\\
Suppose that $y$ has no neighbour in $V(P_1)$.
We may therefore assume that $y$ has a unique neighbour $v\in A_{1,2,3}(x)$, because otherwise the theorem holds, and we may 
assume that $v\in A_2(x)$. Both $x,y$ have a neighbour in $V(P_2\cup P_3)$ that is not in $A_{2,3}(x)$ and has no neighbour in this set;
and so there is an $xy$-path $R$ with interior anticomplete to $A_{2,3}(x)$. But then there
is a long theta formed by the paths 
$$v\dd P_2\dd \alpha_2(x)\dd x,$$
$$v\dd y\dd R\dd x$$
$$v\dd P_2\dd a_2\dd a_3\dd P_3\dd \alpha_3(x)\dd x$$
a contradiction. This proves (3).
\\
\\
(4) {\em $y$ has no neighbour in $A_2(x)\cup A_3(x)$.}
\\
\\
Now $y$ has at most two neighbours in $A_2(x)\cup A_3(x)$.
If $y$ has two neighbours $u,v$ in $A_2(x)\cup A_3(x)$, then they are adjacent
and we may assume they belong to $A_2(x)$, and $a_2,u,v,\alpha_2(x)$ are in order in $P_2$. But then there is a long prism with bases
$\{a_1,a_2,a_2\}$ and $\{y,u,v\}$ and constituent paths 
$$y\dd \alpha_1(y)\dd P_1\dd a_1,$$ 
$$u\dd P_2\dd a_2$$  
$$v\dd P_2\dd \alpha_2(x)\dd x\dd \alpha_3(x)\dd P_3\dd a_3,$$ 
and it is shorter than $K$, a contradiction. Thus $y$ has at most one 
neighbour in $A_2(x)\cup A_3(x)$. If there is such a neighbour, say $v\in A_2(x)$, let $M$ be an induced $xy$-path with 
interior in 
$V(\beta_1(y)\dd P_1\dd b_1\dd b_j\dd P_j\dd \beta_j(x))$;
then there is a long theta formed by the 
paths 
$$v\dd P_2\dd \alpha_2(x)\dd x,$$
$$v\dd P_2\dd a_2\dd a_3\dd P_3\dd \alpha_3(x)\dd x,$$ 
$$v\dd y\dd M\dd x,$$
a contradiction. This proves (4).

\bigskip

From the definition of ``distant'', we may assume that $y$ has no neighbour in $V(K^0_{\ell-3}(\beta_2(x)))$.
Since $y$ has a neighbour in $V(P_1)$, there is an $xy$-path $R_1$ with one end $\beta_2(x)$ and with interior in
the vertex set of $\beta_1(y)\dd P_1\dd b_1\dd b_2\dd P_2\dd \beta_2(x)$, which is therefore long.
By Lemma \ref{notwotwos}, $y$ has a neighbour in $V(P_3)$ not in $A_3(x)$ and not adjacent to $\alpha_3(x)$; let $R_3$ be an 
induced $xy$-path with interior in $V(P_3)$, chosen with interior anticomplete to $\alpha_3(x)$ if $j=3$. Let $R_2$ be the path
$y\dd \alpha_1(y)\dd P_1\dd a_1\dd a_j\dd P_j\dd \alpha_j(x)$. If $\alpha_1(y)$ is distinct from and nonadjacent to $\beta_1(y)$, 
the three paths $R_1,R_2,R_3$ form a long theta, a contradiction. If $\alpha_1(y)=\beta_1(y)$, then the three paths 
$R_1\setminus \{y\}$,
$R_2\setminus \{y\}$, $\alpha_1(y)\dd y\dd R_3\dd x$ form a long theta, a contradiction. Thus $\alpha_1(y), \beta_1(y)$ are distinct and adjacent. This proves Lemma \ref{distant}.

\end{proof}
\begin{lem}\label{twotriangles}
Let $G$ be a graph with no long theta, and let $K$ be a tidy shortest long near-prism in $G$, with constituent paths $P_1,P_2,P_3$.
Let $x,y,z$ be $K$-major, such that $y,z$ are both distant from $x$ and $y,z$ are nonadjacent.
For all distinct $i,j,k\in \{1,2,3\}$, 
either there is no $yz$-path of length at least $\ell-2$ with interior in $A_{i,j}(x)$, 
or there is no $yz$-path of length at least $\ell-2$ with 
interior in $V(P_k)$.
\end{lem}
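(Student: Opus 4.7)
The plan is to argue by contradiction. Suppose both long $yz$-paths exist; call them $R_{ij}$ (length at least $\ell-2$, interior in $A_{i,j}(x)$) and $R_k$ (length at least $\ell-2$, interior in $V(P_k)$). I will show that this forces a long theta with apex pair $\{y,z\}$, contradicting the prospect hypothesis.

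First I would apply Lemma~\ref{distant} to each of $y$ and $z$: both have exactly two adjacent neighbours in $A_{1,2,3}(x)$. Combined with Lemma~\ref{2paths}, this controls precisely how $y$ and $z$ attach to $V(K)$. In particular, the first interior vertex of $R_{ij}$ is one of $y$'s two $A_{1,2,3}(x)$-neighbours, lying in $A_i(x)\cup A_j(x)$, and the first interior vertex of $R_k$ is a neighbour of $y$ in $V(P_k)$ (possibly the other one of those two, if it lies in $A_k(x)$, and otherwise a neighbour in $V(P_k)\setminus A_k(x)$); the analogous statements hold at the $z$-end.

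Next I would construct a third induced $yz$-path $S$ whose interior lives in $(V(P_i)\setminus A_i(x))\cup(V(P_j)\setminus A_j(x))\cup\{x\}$, that is, routed through $x$ via the ``$b$-sides'' of $P_i$ and $P_j$. Lemma~\ref{2paths} places neighbours of $x$ in at least two constituent paths, so $x$ has a neighbour on at least one $b$-side, and the second and third bullets in the definition of ``distant'' are designed precisely so that $y$ and $z$ have neighbours on the required $b$-sides (they forbid neighbours only in narrow zones around the $\alpha_m(x)$ and $\beta_m(x)$). Concatenating an induced $yx$-path through the $b$-side of $P_i$ with an induced $xz$-path through the $b$-side of $P_j$ (or vice versa) then produces $S$, with length at least $\ell-2$ forced by the bounds on the ``forbidden zones''. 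The three paths $R_{ij}, R_k, S$ have pairwise disjoint interiors, and the only candidate cross-edges in a near-prism are the six triangle edges, none of which can become a chord because each path enters and leaves each constituent path only in the prescribed regions; so they form a long theta, the sought contradiction.

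The main obstacle will be the degenerate cases where $S$ cannot be built as above: for instance when $x$ has no neighbour in $V(P_i)$ or $V(P_j)$, so one $b$-side is empty, or when the two adjacent neighbours of $y$ (or $z$) in $A_{1,2,3}(x)$ happen to be triangle vertices of two different paths. In such cases I would fall back on Lemma~\ref{notwohat} and Lemma~\ref{notwotwos} together with the absence of long ban-the-bombs to rule out most configurations directly, and in the remaining ones build a long near-prism shorter than $K$ by splicing a suitable subpath of $R_{ij}$ or $R_k$ into one constituent path of $K$, again yielding a contradiction with either the prospect hypothesis or the minimality of $K$.
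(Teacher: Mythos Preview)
Your overall plan---assume both paths exist and exhibit a third induced $yz$-path so that the three form a long theta---is exactly the paper's approach, and your use of Lemma~\ref{distant} to pin down the attachments of $y$ and $z$ in $A_{1,2,3}(x)$ is right. The gap is in the construction of the third path. Routing $S$ through $x$ does not work in general: once you know (via Lemma~\ref{distant}) that the two neighbours of $y$ in $A_{1,2,3}(x)$ lie in $A_{i,j}(x)$, the neighbours of $y$ (and of $z$) in $V(P_k)$ must lie in $V(P_k)\setminus A_k(x)$, but nothing prevents $\beta_k(x)$---or any other neighbour of $x$ in $V(P_k)$---from lying strictly between them on $P_k$. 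In that case $x\in S^*$ is adjacent to a vertex of $R_k^*$, and $R_{ij},R_k,S$ do not form a theta.

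The paper avoids this by keeping $x$ out of the third path altogether: it routes $M_3$ inside $V(P_i\cup P_j)$ on the $b$-side, crossing (if needed) via the triangle edge $b_ib_j$. The point you are missing is why $y$ and $z$ each have a neighbour in $(V(P_i)\cup V(P_j))\setminus A_{i,j}(x)$. This is \emph{not} a consequence of the ``distant'' bullets as you suggest---those only forbid neighbours in certain zones, they do not produce any---but of Lemma~\ref{notwohat}: each of $y,z$ has exactly two neighbours in $A_{i,j}(x)$ and they are adjacent, so if those were all its neighbours in $V(P_i\cup P_j)$ then one of $P_i,P_j$ would contain none, and~\ref{notwohat} applied to the other would be violated. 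The ``distant'' condition is then used only to check that this third neighbour is nonadjacent to $\alpha_i(x),\alpha_j(x)$, making $M_3^*$ anticomplete to $R_{ij}^*$.

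Two smaller remarks. The hypothesis here is only that $G$ has no long theta, not that $G$ is a prospect, so your proposed fallbacks to the absence of long ban-the-bombs are unavailable. And you do not need the third path to have length at least $\ell-2$: since $R_{ij}$ and $R_k$ each have length at least $\ell-2$, any third path of length at least two (automatic, as $y,z$ are nonadjacent) makes all three holes long.
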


\begin{proof}
Suppose that $M_1$ is a $yz$-path of length at least $\ell-2$ with interior in $A_{i,j}(x)$, and $M_2$ is a $yz$-path of length at least $\ell-2$ with     
interior in $V(P_k)$.
By Lemma \ref{distant}, $y,z$ each have exactly two neighbours in $A_{i,j}(x)$ 
and they are adjacent. By Lemma \ref{notwohat}, $y,z$ each have a third neighbour in 
$V(P_i\cup P_j)$, and this neighbour does not belong to $A_{i,j}(x)$ and has no neighbour in $A_{i,j}(x)$, since $(x,y)$
and $(x,z)$ 
are distant. Consequently
there is an induced $yz$-path $M_3$ with interior in $V(P_i\cup P_j)$ and anticomplete to $M_1^*\cup M_2^*$; and $M_1,M_2,M_3$
form a long theta, a contradiction.  This proves Lemma \ref{twotriangles}.

\end{proof}

\begin{lem}\label{twoclosetwos}
Let $G$ be a graph with no long theta, and let $K$ be a tidy shortest long near-prism in $G$, with constituent paths $P_1,P_2,P_3$.
Let $x,y,z$ be $K$-major, such that $y,z$ are both distant from $x$. If there exist $i,j\in \{1,2,3\}$ such that $y$
has no neighbour in $V(P_i)$ and $z$ has no neighbour in $V(P_j)$ then $i = j$.
\end{lem}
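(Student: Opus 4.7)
The proof should closely parallel that of Lemma~\ref{notwotwos}. The plan is to suppose for contradiction that $i\ne j$; by relabelling the constituent paths we may take $i=1$ and $j=2$. By Lemma~\ref{2paths}, $y$ then has neighbours in both $V(P_2)$ and $V(P_3)$, and $z$ has neighbours in both $V(P_1)$ and $V(P_3)$, so $V(P_3)$ plays the role of the ``common'' path that played by $V(P_2)$ in the proof of Lemma~\ref{notwotwos}.

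The easy case is when $y$ and $z$ are nonadjacent. Then Lemma~\ref{notwotwos} applies directly to the nonadjacent $K$-major pair $(y,z)$ with missing paths $V(P_1)$ and $V(P_2)$, yielding $1=2$, a contradiction. In this case the distant-from-$x$ hypothesis is not needed.

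The substantive case is when $y$ and $z$ are adjacent. Here the analogue of the path $M$ from the proof of Lemma~\ref{notwotwos} is only the single edge $yz$, so we cannot use $y,z$ directly as the theta endpoints. The plan is to construct a long theta with endpoints $x$ and one of $y,z$ (they are nonadjacent by the definition of distant), using three internally-disjoint and pairwise anticomplete induced paths: one through $V(P_2)$ via $y$'s neighbours, one through $V(P_1)$ via $z$'s neighbours and the edge $yz$, and one through $V(P_3)$. By Lemma~\ref{distant} together with Lemma~\ref{notwohat}, the two adjacent neighbours of $y$ in $A_{1,2,3}(x)$ must, under tidyness (which forbids $K$-major vertices from touching the base vertices in $F_K^*$), be a consecutive pair deep inside $A_2(x)\cap V(P_2)$ or $A_3(x)\cap V(P_3)$, and similarly for $z$; the distant condition forces this pair to lie at distance $\ge \ell-1$ from $\alpha_i(x)$. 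Consequently the $V(P_1)$- and $V(P_2)$-paths each have length at least $\ell$, which makes all three holes in the constructed theta long.

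The main obstacle is verifying that the three paths really form an induced theta. Tidyness is used repeatedly to rule out triangle-edge chords between the interiors (since under tidyness $x,y,z$ are anticomplete to $\{a_1,a_2,a_3,b_1,b_2,b_3\}$), and Lemma~\ref{notwohat} and the distant condition together rule out chords coming from $x$'s, $y$'s or $z$'s other neighbours on the constituent paths. A delicate subcase is when $y$ and $z$ share a common neighbour $w\in V(P_3)$, so that $\{y,z,w\}$ is a triangle; this triangle obstructs the direct theta construction, and I expect to resolve it by exhibiting a strictly shorter long near-prism with triangle $\{y,z,w\}$ replacing $\{a_1,a_2,a_3\}$, contradicting the fact that $K$ is a shortest long near-prism.
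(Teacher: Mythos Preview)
Your handling of the nonadjacent case is correct: Lemma~\ref{notwotwos} applies directly to the pair $y,z$.

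In the adjacent case your plan has a genuine gap. With your labeling $i=1$, $j=2$, both $y$ and $z$ have neighbours in $V(P_3)$ by Lemma~\ref{2paths}, and whichever of them sits in the interior of your ``via the edge $yz$'' path will have neighbours in $V(P_3)$ that can land in the interior of your third path (the one routed through $V(P_3)$), producing chords you have not ruled out. The distant hypothesis bounds distances from $\alpha_i(x)$ inside each $A_i(x)$, but says nothing about where $y$'s or $z$'s neighbours fall in $V(P_3)\setminus A_3(x)$. Your ``delicate subcase'' (a common neighbour in $V(P_3)$) is only one instance of this obstruction, not the whole of it. There is also a prior gap: you tacitly assume that $y$'s adjacent pair in $A_{1,2,3}(x)$ lies in $A_2(x)$ and $z$'s in $A_1(x)$, but either pair could instead lie in $A_3(x)$, in which case your first two paths are not available at all.

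The paper proceeds differently. After noting (via Lemma~\ref{notwotwos} applied to the nonadjacent pairs $(x,y)$ and $(x,z)$) that $x$ meets all three constituent paths, it uses a shorter-long-near-prism argument, exploiting that $K$ is shortest, to force the adjacent pairs of both $y$ and $z$ in $A_{1,2,3}(x)$ into a single $A_k(x)$, say $A_1(x)$. Only then is the labeling fixed, taking $\{i,j\}=\{2,3\}$; now $y$ has no neighbour in $V(P_3)$, and the long theta can be built with endpoints $x,z$, placing $y$ on the path $z\dd y\dd y_1\dd P_1\dd \alpha_1(x)\dd x$, so no chord from $y$ to the $P_3$-path is possible. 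A residual subcase where the two adjacent pairs coincide in $A_1(x)$ is dispatched by a separate shorter-prism argument with bases $\{a_1,a_2,a_3\}$ and $\{y,z,y_2\}$; this, rather than a shared neighbour in $V(P_3)$, is the genuinely delicate subcase.
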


\begin{proof}
Suppose that $y$ has no neighbour in $V(P_i)$ and $z$ has no neighbour in $V(P_j)$, and $i\ne j$. By Lemma \ref{notwotwos}, 
$y,z$ are adjacent; and also by Lemma \ref{notwotwos} (applied to $x,y$ and to $x,z$), $x$ has a neighbour
in each of $V(P_1), V(P_2), V(P_3)$. From Lemma \ref{distant}, $y,z$ each have exactly two neighbours
in $A_{1,2,3}(x)$ and they are adjacent, and from the symmetry we may assume that $y,z$ have no neighbour in $A_3(x)$.
Let $R$ be the path $\alpha_1(x)\dd P_1\dd a_1\dd a_2\dd P_2\dd \alpha_2(x)$, and let the neighbours of $y$ in $V(R)$ be $y_1,y_2$, where $\alpha_1(x), y_1,y_2,\alpha_2(x)$
are in order in $R$. Define $z_1,z_2$ similarly. We may assume that $\alpha_1(x),y_1,z_2,\alpha_2(x)$ are distinct and
in order in $R$. 
If the path $y_2\dd R\dd z_1$ has length at least $\ell-3$, then there is a long prism with bases $\{y,y_1,y_2\}$, 
$\{z,z_1,z_2\}$, and constituent paths 
$$y\dd z,$$
$$y_2\dd R\dd z_1,$$
$$y_1\dd R\dd \alpha_1(x)\dd x\dd \alpha_2(x)\dd R\dd z_2$$
and it is shorter than $K$, a contradiction. Thus $y_2\dd R\dd z_1$ has length at most $\ell-4$ and so 
$\{y_1,y_2,z_1,z_2\}$ is a subset of one of $A_1(x), A_2(x)$; and we may assume that $\{y_1,y_2,z_1,z_2\}\subseteq A_1(x)$.
So $y,z$ have no neighbours in $A_2(x)$ and no neighbours in $A_3(x)$, restoring the symmetry between $P_2,P_3$; and therefore
we may assume that $i=3$ and $j=2$, that is, $y$ has no neighbour in $V(P_3)$ and $z$ has no neighbour in $V(P_2)$. 
By Lemma \ref{2paths}, $y$ has a neighbour in $V(P_2)$ and $z$ has a neighbour in $V(P_3)$.

If $y_1=z_1$ and hence $y_2=z_2$, there is a 
long prism with bases $\{a_1,a_2,a_3\}$, $\{y,z,y_2\}$ and constituent paths 
$$y_2\dd P_1\dd a_1,$$ 
$$y\dd \alpha_2(y)\dd P_2\dd a_2$$ 
$$z\dd \alpha_3(z)\dd P_3\dd a_3,$$ 
and it is shorter than $K$, a contradiction. So $y_1\ne z_1$, and therefore 
$y_1,z_2$ are noadjacent.
Then there is a long theta with constituent paths 
$$z\dd y\dd y_1\dd P_1\dd \alpha_1(x)\dd x,$$
$$z\dd z_2\dd R\dd \alpha_2(x)\dd x,$$
and an induced $xz$-path with interior in $V(P_3)$, a contradiction. This proves Lemma \ref{twoclosetwos}.

\end{proof}

\section{Cleaning lightest long near-prisms}\label{sec:kmajor}

In this section we will complete the proof of Theorem \ref{alg:longprisms}, by showing how to compute a cleaning list for lightest
long near-prisms.

Let $\mathcal{Q}$ be a set of paths of $G$, pairwise anticomplete. We define $V(\mathcal{Q})$
to be the union of the vertex sets of the members of $\mathcal{Q}$, and $\mathcal{Q}^*$ to be the union of the interiors of the
member of $\mathcal{Q}$, and the {\em cost} of $\mathcal{Q}$ to be the cardinality of $V(\mathcal{Q})$.

Let $K$ be a shortest long near-prism, with an ordered frame $\mathcal{F}$, and with constituent paths $P_1,P_2,P_3$, numbered 
according to $\mathcal{F}$.
A $K$-major vertex $x$ is {\em $(K,\mathcal{F})$-extremal} if either
\begin{itemize}
\item there is a $K$-major vertex with no neighbour in $V(P_1)$, and $x$ is chosen with no neighbour in $V(P_1)$ and with $A_2(x)$ maximal; or
\item every $K$-major vertex has a neighbour in $V(P_1)$, and $x$ is chosen with $A_1(x)$ maximal.
\end{itemize}
Thus if $x$ is $(K,\mathcal{F})$-extremal, and has a neighbour in $V(P_1)$, then
every $K$-major vertex has a neighbour in $A_1(x)$; and otherwise $A_1(x)=V(P_1)$, and every $K$-major vertex has a neighbour in
$V(P_1)\cup A_2(x)$.
A {\em $(K,\mathcal{F})$-contrivance} 
consists of a quintuple $(x,y,\alpha, h,\mathcal{Q})$, where $x,y$ are $K$-major (possibly $y=x$), and $x$ is 
$(K,\mathcal{F})$-extremal, and 
$\mathcal{Q}$ is a set of paths 
of $K$, pairwise anticomplete, and $\alpha\in \mathcal{Q}^*$, and $h\in \{1,2\}$, such that:
\begin{itemize}
\item every $K$-major vertex is either adjacent to one of $x,y$ or has a neighbour in $\mathcal{Q}^*$; 
\item if $x$ has a neighbour in $V(P_1)$ then $h=1$ and $\alpha=\alpha_1(x)$, and otherwise $h=2$ and $\alpha=\alpha_2(x)$; and
\item every neighbour of $x$ or $y$ in $A_{1,2}(x)$ belongs to $\mathcal{Q}^*$.
\end{itemize}
Its {\em cost} is the cost of $\mathcal{Q}$.
From Lemma \ref{twotriangles} we have:
\begin{lem} \label{contrivance}
Let $G$ be a prospect, and let $K$ be a tidy shortest long near-prism in $G$ with an ordered frame $\mathcal{F}$, and 
with a $K$-major vertex.
Then there is a $(K,\mathcal{F})$-contrivance with cost at most $6\ell-2$.
\end{lem}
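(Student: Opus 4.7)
My plan is to exhibit the contrivance $(x,y,\alpha,h,\mathcal{Q})$ by picking $x$ to be any $(K,\mathcal{F})$-extremal vertex (which exists since $K$ has a $K$-major vertex), setting $h,\alpha$ exactly as the definition dictates, choosing $y$ either equal to $x$ or equal to a cleverly selected $K$-major vertex distant from $x$, and building $\mathcal{Q}$ from a small constant-sized family of subpaths of $K$ of the form $K_n^m(v)$ with $m,n\le\ell-2$, centred at some of the vertices $\alpha_i(x),\beta_i(x),\alpha_i(y),\beta_i(y)$. Each such segment lies on a single constituent path with interior avoiding $F_K^*$, so pairwise anticompleteness will be automatic so long as distinct segments on the same $P_i$ have disjoint closed neighbourhoods.

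I would take the base family around $x$ to include, for each $i$ with $x$ having a neighbour in $V(P_i)$, a segment $K_{n_i}^{m_i}(\alpha_i(x))$ with $m_i\le \ell-2$ and $n_i\ge 1$ (so that $\alpha=\alpha_h(x)$ sits in its interior), together with a symmetric segment around $\beta_i(x)$. These will be chosen to contain (i) every neighbour of $x$ in $A_{1,2}(x)$, which clusters near $\alpha_i(x),\beta_i(x)$ by Lemmas \ref{3pairwisenonadjacent} and \ref{notwohat}, and (ii) all of $K_1^{\ell-2}(\alpha_i(x))$ and $K_0^{\ell-3}(\beta_i(x))$, so that by the very definition of ``distant'', every $K$-major vertex $z$ not adjacent to $x$ and not distant from $x$ already has a neighbour in $\mathcal{Q}^*$. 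If no $K$-major vertex is distant from $x$, then I would set $y:=x$ and stop.

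Otherwise I would pick a $K$-major $y$ distant from $x$, chosen by an extremal rule analogous to $(K,\mathcal{F})$-extremality (for instance with $A_i(y)$ maximal on the appropriate $i$). By Lemma \ref{distant}, $y$'s neighbours in $A_{1,2,3}(x)$ are exactly two adjacent vertices; I would adjoin to $\mathcal{Q}$ at most two further $K_n^m$-segments of length $O(\ell)$ around $\alpha_i(y),\beta_i(y)$ to ensure every neighbour of $y$ in $A_{1,2}(x)$ lies in $\mathcal{Q}^*$. The critical step will be to verify that any other $K$-major $z'$ distant from $x$ is then either adjacent to $y$ or has a neighbour in $\mathcal{Q}^*$: Lemmas \ref{twoclosetwos} and \ref{twotriangles} force $y,z'$ to have a common ``missed path'' and rule out $z'$ having an escape route disjoint from the support of $\mathcal{Q}$. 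Finally, summing the vertex counts --- at most three base $\alpha$-side segments, at most three base $\beta$-side segments, and two augmenting segments for $y$, each of at most $2\ell-3$ vertices, with the unavoidable overlaps at $\alpha_i(x),\beta_i(x)$ accounted for --- will yield $|V(\mathcal{Q})|\le 6\ell-2$.

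The main obstacle will be the case analysis producing $y$: showing that a single distant vertex is enough. I would fork on whether $x$ has neighbours on all three $V(P_i)$ or only two, and within each branch on which constituent path a hypothetical escape vertex $z'$ misses; the combination of Lemmas \ref{notwotwos}, \ref{distant}, \ref{twoclosetwos}, and \ref{twotriangles} should exclude every problematic configuration, but threading them together will require careful bookkeeping of which portion of each $P_i$ has been placed in $\mathcal{Q}$. A secondary, more routine, difficulty will be tuning the $K_n^m$ parameters so that the total cost lands at $\le 6\ell-2$ rather than a looser multiple of $\ell$.
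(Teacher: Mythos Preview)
Your plan is essentially the paper's: choose $x$ $(K,\mathcal{F})$-extremal, lay down short $K^m_n$-segments that trap every non-distant $K$-major vertex, then if some distant vertex exists, pick $y$ distant from $x$ by an extremal rule and add two more segments, invoking Lemmas \ref{distant}, \ref{twotriangles}, \ref{twoclosetwos} to show every remaining major vertex is caught.

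The one place your sketch would get into trouble is the base family. You propose segments around both $\alpha_i(x)$ and $\beta_i(x)$ for each $i$, but the only neighbour of $x$ in $A_i(x)$ is $\alpha_i(x)$ itself, so the $\beta$-side segments are not needed for the ``neighbours of $x$ in $A_{1,2}(x)$'' clause; and the third bullet of ``distant'' requires only \emph{one} $\beta_j$-segment (since failing it means $y$ hits \emph{every} $K^0_{\ell-3}(\beta_j(x))$). The paper accordingly uses just three base segments---$K^{\ell-1}_2(\alpha_i(x))$ for each $i$ where $x$ hits $P_i$, swapping one for $K^1_{\ell-2}(\beta_i(x))$ when $x$ misses a path---and then $Q_4=K^{\ell-4}_1(\alpha_k(y))$ together with a length-$(2\ell-7)$ path $Q_5$ centred on $y$'s two adjacent neighbours in $A_{1,2}(x)$. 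These five pieces sum to at most $3(\ell+2)+(\ell-2)+(2\ell-6)=6\ell-2$ on the nose; your six-plus-two scheme cannot be tuned down that far, so the ``routine'' difficulty you flag is exactly the step where you must drop the redundant $\beta$-segments. (Also, you needn't worry about enforcing anticompleteness by hand: the paper simply takes the components of the union as $\mathcal{Q}$.)
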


\begin{proof}
Let $P_1,P_2,P_3$ be the constituent paths of $K$. 
Choose $x$  $(K,\mathcal{F})$-extremal, and let $S$ be the set of all $K$-major vertices that are distant from $x$. 

If $x$ has a neighbour in $V(P_1)$ let $h=1$ and $\alpha=\alpha_1(x)$, and otherwise let $h=2$ and $\alpha=\alpha_2(x)$.
If $x$ has a neighbour in $V(P_i)$ for $i = 1,2,3$, let 
$Q_i$ be the path $K^{\ell-1}_2(\alpha_i(x))$ for $i = 1,2,3$. If $x$ has neighbours in $V(P_i), V(P_j)$ and not in $V(P_k)$, 
where $\{i,j,k\}=\{1,2,3\}$ and $i<j$, let $Q_1$ be the path $K^{\ell-1}_2(\alpha_i(x))$, let 
$Q_2$ be the path $K^{\ell-1}_2(\alpha_j(x))$,
and let $Q_3$ be the path $K^{1}_{\ell-2}(\beta_i(x))$.
Every $K$-major vertex 
has a neighbour in $A_{1,2,3}(x)$, since $x$ is  $(K,\mathcal{F})$-extremal; and so every $K$-major vertex nonadjacent to $x$ 
either belongs to $S$ or 
has a neighbour in 
one of $Q_1^*, Q_2^*,Q_3^*$, from the definition of ``distant''. If $S=\emptyset$, let $\mathcal{Q}$ be the set of
components of the graph induced on the union of the vertex sets of $Q_1,Q_2,Q_3$; then $(x,x,\alpha,h,\mathcal{Q})$
is a $(K,\mathcal{F})$-contrivance satisfying the theorem, so we may assume that $S\ne \emptyset$.

If every vertex in $S$ has a neighbour in $V(P_3)$, let $k=3$, and otherwise let $k=2$; then by Lemma \ref{twoclosetwos},
every vertex in $S$ has a neighbour in $V(P_k)$. Choose $y\in S$
with $A_k(y)$ maximal, let $Q_4$ be the path
$K^{\ell-4}_1(\alpha_k(y))$ and let $Q_5$ be a path of $K$ of length $2\ell-7$ such that the two neighbours of $y$
in $A_{1,2}(x)$ are the two middle vertices of $Q_5$.
\\
\\
(1) {\em Every vertex in $S$ nonadjacent to $y$ has a neighbour in $Q_4^*\cup Q_5^*$.}
\\
\\
Let $z\in S$ be nonadjacent to $y$,
and suppose it has no neighbour in $Q_4^*\cup Q_5^*$. From the choice of $x$, it follows that $y,z$ both have a neighbour 
in $A_{1,2}(x)$, and so there is a $yz$-path $M_1$ of length at least $\ell-2$ with 
interior in $A_{1,2}(x)$; and $M_1$ has length at least $\ell-2$ since $z$ has no neighbour in $Q_5^*$. 
By Lemma \ref{twotriangles}, there is no $yz$-path of length at least $\ell-2$
with interior in $A_3(x)$. Suppose that $k=3$; then from the choice of $y$, 
there is a $yz$-path with interior in $A_3(y)$,
which has length at least $\ell-2$ since $z$ has no neighbour in $Q_4^*$, a contradiction. So $k=2$, and therefore some vertex
in $S$ has no neighbour in $V(P_3)$; and so by Lemma \ref{notwotwos}, $x$ and $z$ both have a neighbour in $V(P_1)$ and in
$V(P_2)$; and $z$ has a neighbour in $A_2(y)$ from the choice of $y$.
Hence there is a $yz$-path $M_2$  with interior in $A_2(y)$,
which has length at least $\ell-2$ since $z$ has no neighbour in $Q_4^*$. But $x$
has a neighbour in $V(P_1)$, and therefore $y,z$ both have neighbours in $A_1(x)$ since $x$ is $(K,\mathcal{F})$-extremal, 
and so $y,z$ have no neighbours in $A_2(x)$ by
Lemma \ref{distant}; and it follows that $M_1$ has interior in $V(P_1)$. This contradicts Lemma \ref{twotriangles},
(taking $i = 1$, $j=3$ and $k=2$). This proves (1).

\bigskip

Let $\mathcal{Q}$ be the set of
components of the graph induced on the union of the vertex sets of $Q_1\ll Q_5$; then $(x,y,\alpha,h,\mathcal{Q})$
is a $(K,\mathcal{F})$-contrivance satisfying the theorem.
This proves Lemma \ref{contrivance}.

\end{proof}
If $K$ is a tidy long near-prism, and $\mathcal{F}$ is an ordered frame for $K$, and the constituent paths of $K$ are $P_1,P_2,P_3$
numbered according to $\mathcal{F}$, and $x$ is $K$-major, let $L(x)=A_1(x)$ if $x$ has a neighbour in $V(P_1)$, and $L(x)=V(P_1)\cup A_2(x)$ otherwise.
If $K$ is a tidy, lightest long near-prism, then a knowledge of the ordered frame $\mathcal{F}$ and of a $(K,\mathcal{F})$-contrivance 
$(x,\alpha,h, \mathcal{Q})$
allows us to reconstruct $L(x)$, as 
the next result shows: 

\begin{lem}\label{firstpath}
Let $G$ be a prospect, let $K$ be a tidy lightest long near-prism in $G$, let $\mathcal{F}$ be an ordered frame of $K$, with frame $F$, and
let $(x,y,\alpha,h, \mathcal{Q})$ be a $(K,\mathcal{F})$-contrivance. 
Let $P_1,P_2,P_3$ be the constituent paths of $K$, numbered according to $\mathcal{F}$, 
where $P_i$ has ends $a_i, b_i$ as usual. Let $s_i, t_i$ be the $a_i$-end and $b_i$-end of $F$ respectively, 
if they exist.
Let $Z_1$ be the set of all vertices of $G$ not in
$V(\mathcal{Q})$ but with a neighbour in $\mathcal{Q}^*$, and let $Z_2$ be the set of all vertices adjacent to $x$ or $y$ that are 
not in $V(F)$ or in $V(\mathcal{Q})$.
Let $G_1=G\setminus (Z_1\cup Z_2)$.
\begin{itemize}
\item If $h=1$ (and therefore $x$ has a neighbour in $V(P_1)$, and $\alpha=\alpha_1(x)$, and $s_1$ is defined), 
then $s_1\dd P_1\dd \alpha_1$ is the lightest
$s_1\alpha$-path in $G_1$.
\item Assume that $h=2$ (and so $x$ has no neighbour in $V(P_1)$, and $\alpha=\alpha_2(x)$, and $s_2$ is defined).
If $s_1$ is not defined, then $P_1$ is the $a_1b_1$-path in $F\setminus \{a_2,a_3,b_2,b_3\}$, and $s_2\dd P_2\dd \alpha_2(x)$
is the lightest $s_2\alpha$-path in $G_1$. 
If $s_1$ is defined, then $s_1\dd P_1\dd t_1$ is the lightest $s_1t_1$-path in $G_1$, and 
$s_2\dd P_2\dd \alpha_2(x)$
is the lightest $s_2\alpha$-path in $G_2$, where $G_2$ is obtained from $G_1$ by deleting all vertices that belong to or have a neighbour in  $V(s_1\dd P_1\dd t_1)$.
\end{itemize}
\end{lem}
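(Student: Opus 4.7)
The plan is to verify, in each case, that the named path lies in the appropriate subgraph and is \emph{locally lightest} there, whence by the observation preceding Theorem~\ref{thm:detectcleanprism} it equals the unique lightest such path. The argument has two parts: the deletion $Z_1\cup Z_2$ preserves the constituent paths of $K$ and cleans away all $K$-major vertices of $G$, so that Lemma~\ref{prismjump} can be applied to $K$ inside $G_1$ exactly as in the proof of Theorem~\ref{thm:detectcleanprism}.

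First I would verify cleaning. By the first bullet of the contrivance definition, each $K$-major vertex $z$ is adjacent to one of $x,y$ or has a neighbour in $\mathcal{Q}^*$. Since $K$-major vertices lie outside $V(K)\supseteq V(F)\cup V(\mathcal{Q})$, these two cases place $z$ in $Z_2$ and $Z_1$ respectively, so $G_1$ contains no $K$-major vertex. Next I would verify survival of the target paths. For $v\in V(s_1\dd P_1\dd \alpha)$ we have $v\in A_1(x)\subseteq A_{1,2}(x)$; by the third bullet of the contrivance every neighbour of $x$ or $y$ in $A_{1,2}(x)$ lies in $\mathcal{Q}^*\subseteq V(\mathcal{Q})$, which rules out $v\in Z_2$. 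If $v\notin V(\mathcal{Q})$ had a neighbour $w\in \mathcal{Q}^*$, then $w$ is interior to a sub-path of some $P_i$ contained in $\mathcal{Q}$, so both of its two $P_i$-neighbours (one of which is $v$) lie in $V(\mathcal{Q})$, a contradiction, so $v\notin Z_1$ either. The same argument applied to the remainder of $V(K)$ shows that $K$ survives intact in $G_1$ and is clean there. In the $h=2,\ s_1$ undefined case, the frame conditions together with $|E(P_1)|\le|E(P_2)|\le|E(P_3)|$ force $F_1$ to be the unique $a_1b_1$-path in $F\setminus\{a_2,a_3,b_2,b_3\}$, and by the definition of $F_K$ this equals $P_1$.

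For local lightest-ness I would invoke Lemma~\ref{prismjump} exactly as in the proof of Theorem~\ref{thm:detectcleanprism}. With $K$ a clean tidy shortest long near-prism in $G_1$, a lighter $s_1\alpha$-path $Q$ in $G_1$ could be used to construct a shorter long near-prism than $K$: one replaces the sub-path $s_1\dd P_1\dd \alpha$ inside $P_1$ by a suitable induced subpath of $Q$, appealing to Lemma~\ref{prismjump} to rule out any jumps from $Q$ into $V(P_2)\cup V(P_3)$ that would prevent the replacement from being a valid induced near-prism. The $h=2,\ s_1$ defined case splits into two stages: first $s_1\dd P_1\dd t_1$ is locally lightest in $G_1$ by the same argument; then $V(s_1\dd P_1\dd t_1)$ and its external neighbours are deleted to form $G_2$ (which still preserves $P_2, P_3$ and the cleanness of $K$), and once more $s_2\dd P_2\dd \alpha_2(x)$ is locally lightest in $G_2$.

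The main obstacle is the bookkeeping in the survival step: verifying carefully that the deletion $Z_1\cup Z_2$ removes no interior vertex of a target path and no part of $V(K)$ required for the Lemma~\ref{prismjump} application. Once survival is established, the local lightest-ness conclusion is a direct adaptation of the reasoning used in Theorem~\ref{thm:detectcleanprism}.
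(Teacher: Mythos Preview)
Your approach matches the paper's: check that the target path lies in $G_1$, check that $G_1$ contains no $K$-major vertex, and then invoke Lemma~\ref{prismjump} to obtain local lightest-ness exactly as in Theorem~\ref{thm:detectcleanprism}. Your cleaning argument is correct, and your survival argument for the specific target path $s_1\dd P_1\dd\alpha$ (and its analogues) is correct.

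There is, however, a genuine gap in the step where you claim that ``$K$ survives intact in $G_1$'' and that ``the same argument applied to the remainder of $V(K)$'' works. The third bullet of the contrivance definition only guarantees that neighbours of $x,y$ lying in $A_{1,2}(x)$ are in $\mathcal{Q}^*$; it says nothing about neighbours of $x$ or $y$ in $V(K)\setminus A_{1,2}(x)$. For instance $\beta_3(x)$, or a neighbour of $y$ in $V(P_3)\setminus A_{1,2}(x)$, need not lie in $V(F)\cup V(\mathcal{Q})$ and so can fall into $Z_2$ and be deleted. Thus $K$ need not be a subgraph of $G_1$, and you cannot apply Lemma~\ref{prismjump} ``to $K$ inside $G_1$''.

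The fix is simply to apply Lemma~\ref{prismjump} in the ambient prospect $G$, where $K$ is already a tidy shortest (indeed lightest) long near-prism by hypothesis. The only hypothesis of Lemma~\ref{prismjump} that concerns the competing path $Q$ is that $Q^*$ contain no $K$-major vertex; and this holds for every path $Q$ of $G_1$, by your cleaning step. This is precisely what the paper's (admittedly terse) proof does: it never asserts that $K$ lives in $G_1$, only that the target path does and that Lemma~\ref{prismjump} (in $G$) forces it to be locally lightest there. So drop the survival-of-$K$ claim and route the appeal to Lemma~\ref{prismjump} through $G$ rather than $G_1$.
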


\begin{proof}

To prove the first bullet, we assume that $h=1$, and so $x$ has a neighbour in $V(P_1)$, and therefore $s_1,t_1$ are defined, and $\alpha=\alpha_1(x)$.
The path $s_1\dd P_1\dd \alpha_1(x)$ is the locally lightest $s_1\alpha_1(x)$-path in $G_1$ by Lemma \ref{prismjump}, and 
hence is the lightest 
$s_1\alpha$-path in $G_1$. This proves the first bullet.

For the second bullet, we assume that $h=2$, and so $x$ has no neighbour in $V(P_1)$, and therefore $x$ has a neighbour in $V(P_2)$ by 
Lemma \ref{2paths}; and so $s_2,t_2$ are defined and $\alpha=\alpha_2(x)$. If $s_1$ is not defined, then $P_1$ is a path of $F$
as claimed, and $s_2\dd P_2\dd \alpha_2(x)$ is the locally lightest $s_2\alpha_2(x)$-path in $G_1$ by Lemma \ref{prismjump}, and hence
is the lightest $s_2\alpha$-path in $G_1$. So we assume that $s_1,t_1$ are defined.
Then $s_1\dd P_1\dd t_1$ is a locally lightest $s_1t_1$-path in $G_1$, by Lemma \ref{prismjump},
and hence is the lightest $s_1t_1$-path in $G_1$. Similarly $s_2\dd P_2\dd \alpha_2(x)$ is a locally lightest $s_2\alpha_2(x)$-path
in $G_2$ (though not necessarily in $G_1$) by Lemma \ref{prismjump}, and so is the lightest $s_2\alpha_2(x)$-path
in $G_2$. This proves the second bullet and so proves 
Lemma \ref{firstpath}.

\end{proof}

Thus, if there is a lightest long near-prism $K$, with a given ordered frame $\mathcal{F}$ and a given $(K,\mathcal{F})$-contrivance 
$(x,y,\alpha,h,\mathcal{Q})$, we can reconstruct $L(x)$ algorithmically, using the construction of Lemma \ref{firstpath}, 
in time $\mathcal{O}(|G|^3)$. More exactly, if $h=1$, then the first bullet of Lemma \ref{firstpath} gives a method
to compute $A_1(x)=L(x)$. If $h=2$, we first compute $P_1$ using the method of the second bullet of  Lemma \ref{firstpath}; then compute $G_2$; and then compute $A_2(x)$, again using the method of the second bullet of  Lemma \ref{firstpath}.
In summary:

\begin{lem}\label{reconstruct}
There is an algorithm with the following specifications:
\begin{description}
\item[Input:]
A prospect $G$, a linear order of the edges of $G$, 
an ordered frame $\mathcal{F}$ in $G$, and a quintuple $(x,y,\alpha,h,\mathcal{Q})$ where $x,y,\alpha\in V(G)$ and $\mathcal{Q}$
is a set of pairwise anticomplete induced paths of $G$.
\item[Output:] A subgraph $L$ of $G$, such that if there is a long near-prism in $G$, and the lightest long near-prism $K$
is tidy and has ordered frame $\mathcal{F}$ and $(x,y,\alpha,h,\mathcal{Q})$ is a $(K,\mathcal{F})$-contrivance, then $L=L(x)$.
\item[Running time:] $\mathcal{O}(|G|^3)$.
\end{description}
\end{lem}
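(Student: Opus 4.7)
The plan is to directly implement the construction established in Lemma \ref{firstpath}, since the real content (correctness of that construction) has already been proved; what remains is just to describe the algorithm and verify its running time. Given the input $(G,\mathcal{F}, x,y,\alpha,h,\mathcal{Q})$, first compute the vertex sets $Z_1$ (vertices of $V(G)\setminus V(\mathcal{Q})$ with a neighbour in $\mathcal{Q}^*$) and $Z_2$ (vertices adjacent to $x$ or $y$ that lie outside $V(F)\cup V(\mathcal{Q})$), and form $G_1 = G\setminus (Z_1\cup Z_2)$. Each of these operations takes $\mathcal{O}(|G|^2)$ time.

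Next, branch on $h$. If $h=1$, compute the lightest $s_1\alpha$-path in $G_1$; by the first bullet of Lemma \ref{firstpath}, this equals $s_1\dd P_1 \dd \alpha_1(x)$ whenever the hypotheses of the present lemma are met, and its vertex set is $A_1(x)=L(x)$, so we output the subgraph of $G$ induced on this path. If $h=2$, first recover $P_1$: when $s_1$ is undefined, $P_1$ is the unique $a_1b_1$-path in $F\setminus\{a_2,a_3,b_2,b_3\}$, which we read off from $\mathcal{F}$; when $s_1$ is defined, we take $P_1$ to be the lightest $s_1t_1$-path in $G_1$, which by the second bullet of Lemma \ref{firstpath} is the correct path. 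Then build $G_2$ from $G_1$ by deleting every vertex that belongs to or has a neighbour in $V(P_1)$, compute the lightest $s_2\alpha$-path $Q$ in $G_2$, and output the subgraph of $G$ induced on $V(P_1)\cup V(Q)$; again by Lemma \ref{firstpath}, this vertex set equals $V(P_1)\cup A_2(x)=L(x)$. If any of the required paths fails to exist, we simply return the empty subgraph; this only happens when the hypotheses of the lemma were not met, so correctness is not affected.

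For the running time, each lightest-path computation costs $\mathcal{O}(|G|^3)$, using the weighting scheme with edge weights $1+2^{-i}$ described earlier in the paper, and at most two such computations are performed. Constructing $Z_1,Z_2,G_1,G_2$ each takes $\mathcal{O}(|G|^2)$. Therefore the total running time is $\mathcal{O}(|G|^3)$, as claimed.

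The only potential obstacle I anticipate is purely bookkeeping — being careful about the cases where $s_1,t_1$ are undefined (which are precisely the cases where one of $F_1,F_2,F_3$ is a single path of $F$ already determining $P_1$) and making sure the induced subgraph output matches exactly the vertex set $L(x)$. Since the definition of $(K,\mathcal{F})$-contrivance and of $L(x)$ was arranged so that the two cases of Lemma \ref{firstpath} cover precisely these situations, there is no genuine combinatorial obstacle; the proof is essentially a direct recitation of the algorithm together with appeals to Lemma \ref{firstpath} for correctness of each step.
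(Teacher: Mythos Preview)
Your approach is exactly the paper's: the lemma is stated immediately after a paragraph saying ``apply the construction of Lemma~\ref{firstpath}'', and that is precisely what you do, branching on $h$ and invoking the two bullets. One small bookkeeping slip (which you anticipated): in the $h=1$ case the lightest $s_1\alpha$-path has vertex set $V(s_1\dd P_1\dd\alpha_1(x))$, which is not all of $A_1(x)=V(a_1\dd P_1\dd\alpha_1(x))$; you must also union in the known frame segment $a_1\dd F\dd s_1$, and similarly add the frame segments at $a_2$ (and at $a_1,b_1$ when $s_1$ is defined) in the $h=2$ case. With that trivial addition your output equals $L(x)$ and the argument is complete.
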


The good thing about having reconstructed $L(x)$ is that every $K$-major vertex has a neighbour in $L(x)$, either in the interior
of the path $G[L(x)]$ or in $\mathcal{Q}^*$; and no vertices not in $V(K)\setminus L(x)$ have such a neighbour, 
so now we can clean the $K$-major vertices. More exactly, let $Z_4$
be the set of all vertices of $G$ that are not in $V(F)\cup V(\mathcal{Q})\cup V(L(x))$ and have a neighbour either in
$\mathcal{Q}^*$ or in the interior of a path of $L(x)$; then $Z_4\cup V(K)=\emptyset$ and every $K$-major vertex belongs to $Z_4$.
We obtain:

\begin{lem}\label{cleanprism}
There is an algorithm with the following specifications:
\begin{description}
\item[Input:]
A prospect $G$, a linear order of the edges of $G$,
an ordered frame $\mathcal{F}$ in $G$, and a quintuple $(x,y,\alpha,h,\mathcal{Q})$ where $x,y,\alpha\in V(G)$ and $h\in \{1,2\}$, and $\mathcal{Q}$
is a set of pairwise anticomplete induced paths of $G$.
\item[Output:] A subset $X\subseteq V(G)$, such that if there is a long near-prism in $G$, and the lightest long near-prism $K$
is tidy and has ordered frame $\mathcal{F}$ and $(x,y,\alpha,h,\mathcal{Q})$ is a $(K,\mathcal{F})$-contrivance, then $X$
contains all $K$-major vertices and is disjoint from $V(K)$.
\item[Running time:] $\mathcal{O}(|G|^3)$.
\end{description}
\end{lem}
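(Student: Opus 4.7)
The plan is to layer a neighbourhood scan on top of Lemma~\ref{reconstruct}. First, run the algorithm of Lemma~\ref{reconstruct} on the given input to obtain a subgraph $L$; this takes $\mathcal{O}(|G|^3)$ time, and in the promised scenario Lemma~\ref{reconstruct} guarantees $L = L(x)$. Second, compute
\[
X := \{\, v \in V(G) \setminus (V(F) \cup V(\mathcal{Q}) \cup V(L)) : v \text{ has a neighbour in } \mathcal{Q}^* \text{ or in } \operatorname{int}(G[L])\,\}
\]
by an $\mathcal{O}(|G|^2)$ adjacency scan, and output $X$. The total running time is $\mathcal{O}(|G|^3)$.

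For correctness, assume the promised scenario, so that $L = L(x)$. First I would verify $X \cap V(K) = \emptyset$: a vertex $v \in V(K) \setminus (V(F) \cup V(\mathcal{Q}) \cup V(L(x)))$ is not a triangle vertex (triangle vertices lie in $V(F)$), so $v$ sits strictly in the relative interior of one constituent path $P_i$ and has exactly two $V(K)$-neighbours, its $P_i$-consecutives. Any $u \in \mathcal{Q}^*$ or $u \in \operatorname{int}(G[L(x)])$ is interior to an induced subpath of $K$, so both $V(K)$-neighbours of $u$ already lie in that subpath; thus an edge $v \sim u$ would force $v \in V(\mathcal{Q}) \cup V(L(x))$, a contradiction.

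Next I would verify that every $K$-major vertex lies in $X$. Such a $z$ is outside $V(K)$, hence outside $V(F) \cup V(\mathcal{Q}) \cup V(L(x))$. The $(K, \mathcal{F})$-contrivance gives two cases: either $z$ has a neighbour in $\mathcal{Q}^*$, so $z \in X$ directly, or $z$ is adjacent to $x$ or $y$. In the latter case, $(K, \mathcal{F})$-extremality of $x$ supplies $z$ with a neighbour $w \in L(x)$. If $w$ lies in $\operatorname{int}(G[L(x)])$ then $z \in X$; otherwise $w$ is an endpoint of $G[L(x)]$. One endpoint is $\alpha \in \mathcal{Q}^*$, so $w = \alpha$ again yields $z \in X$; the remaining endpoint is a triangle vertex, and a short case analysis using Lemma~\ref{3pairwisenonadjacent} together with the $K^m_n$-paths explicitly built in Lemma~\ref{contrivance} (whose interiors extend a distance of roughly $\ell - 1$ into each $A_i(x)$) forces $z$ to have a further neighbour already in $\mathcal{Q}^*$.

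The main obstacle I anticipate is this last endpoint case: pinning down why a $K$-major vertex attaching to $L(x)$ only at the triangle endpoint must nonetheless have some neighbour in $\mathcal{Q}^*$. The reach of the $K^m_n$-paths from Lemma~\ref{contrivance}, combined with the three-pairwise-nonadjacent-neighbours constraint from Lemma~\ref{3pairwisenonadjacent}, should settle it; every other step is routine from the definitions and Lemma~\ref{reconstruct}.
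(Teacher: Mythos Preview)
Your algorithm and overall structure match the paper's exactly: run Lemma~\ref{reconstruct} to obtain $L$, then return the set of vertices outside $V(F)\cup V(\mathcal{Q})\cup V(L)$ with a neighbour in $\mathcal{Q}^*$ or in the interior of $G[L]$. The paper states this in the paragraph immediately preceding Lemma~\ref{cleanprism} (with the set called $Z_4$) and gives no further argument.

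The ``endpoint case'' you flag as the main obstacle is in fact vacuous, and your proposed resolution is both unnecessary and not licensed by the hypotheses. The non-$\alpha$ endpoint of $G[L(x)]$ is $a_1$ when $h=1$ and $b_1$ when $h=2$; in either case it is a base vertex of the frame and hence lies in $F_K^*$ (base vertices have degree at least two in $F_K$, so they are never ends of the frame). Since the hypothesis says $K$ is \emph{tidy}, $F_K^*$ is anticomplete to $V(G)\setminus V(K)$, and every $K$-major vertex lies in $V(G)\setminus V(K)$. So no $K$-major vertex is adjacent to that endpoint at all, and the case never arises. You do not need Lemma~\ref{3pairwisenonadjacent} here, and you should not invoke the explicit $K^m_n$-paths from Lemma~\ref{contrivance}: the statement of Lemma~\ref{cleanprism} only assumes $(x,y,\alpha,h,\mathcal{Q})$ is a $(K,\mathcal{F})$-contrivance, not that $\mathcal{Q}$ was produced by that particular construction, so appealing to the reach of those specific paths would be a genuine gap in the argument for the lemma as stated.
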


We can now prove Theorem \ref{alg:longprisms}, which we restate:
\begin{theorem}\label{alg:longprisms2}
There is an algorithm with the following specifications:
\begin{description}
\item[Input:] A prospect $G$.
\item[Output:] Decides whether $G$ contains a long near-prism.
\item[Running time:] $\mathcal{O}(|G|^{9\ell+3})$.
\end{description}
\end{theorem}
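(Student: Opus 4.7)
The plan is to prove Theorem \ref{alg:longprisms2} by combining Theorem \ref{thm:detectcleanprism} and Lemma \ref{cleanprism} with an outer loop that enumerates ordered frames and candidate contrivances. Fix an arbitrary linear order on $E(G)$. The algorithm enumerates every ordered frame $\mathcal{F}$ in $G$; for each $\mathcal{F}$ with underlying frame $F$, it forms $G'$ by deleting every vertex of $V(G)\setminus V(F)$ that has a neighbour in $F^*$. Then it first runs Theorem \ref{thm:detectcleanprism} on $G'$ itself (covering the case of a near-prism with no major vertex), and in addition, for each quintuple $(x,y,\alpha,h,\mathcal{Q})$ with $x,y\in V(G')$, $h\in\{1,2\}$, and $\mathcal{Q}$ a set of pairwise anticomplete induced paths of $G'$ of cost at most $6\ell-2$ with $\alpha\in\mathcal{Q}^*$, it invokes Lemma \ref{cleanprism} to get a cleaning set $X$ and runs Theorem \ref{thm:detectcleanprism} on $G'\setminus X$ with input $\mathcal{F}$. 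The algorithm reports ``long near-prism exists'' iff some invocation succeeds.

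For correctness, the ``if'' direction is immediate from Theorem \ref{thm:detectcleanprism}. For ``only if'', suppose $G$ has a long near-prism and let $K$ be the lightest one, with ordered frame $\mathcal{F}_K$ and frame $F_K$. Because the constituent paths of $K$ are induced and pairwise anticomplete except at triangle vertices, no vertex of $V(K)\setminus V(F_K)$ has a neighbour in $F_K^*$, so $K$ survives the deletion step intact and is a tidy lightest long near-prism in the graph $G'$ associated to $\mathcal{F}=\mathcal{F}_K$. If $K$ has no $K$-major vertex in $G'$, the call to Theorem \ref{thm:detectcleanprism} on $G'$ itself already succeeds. Otherwise Lemma \ref{contrivance} furnishes a $(K,\mathcal{F}_K)$-contrivance of cost at most $6\ell-2$; the enumeration will try this quintuple; Lemma \ref{cleanprism} returns a set $X$ disjoint from $V(K)$ and containing every $K$-major vertex of $G'$; and then Theorem \ref{thm:detectcleanprism} must succeed on $G'\setminus X$, because $K$ is a clean lightest long near-prism in that graph with the right ordered frame.

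For the running time, a frame has at most $3\ell$ vertices (at most six in the two triangles, and at most $\ell-2$ beyond $A\cup B$ in each of $F_1,F_2,F_3$), so there are $O(|G|^{3\ell})$ ordered frames. For each fixed $\mathcal{F}$ there are $O(|G|^2)$ choices for $(x,y)$ and $O(|G|^{6\ell-2})$ choices for the vertex set of $\mathcal{Q}$ (while $\alpha\in\mathcal{Q}^*$ and $h\in\{1,2\}$ contribute only $O(1)$ factors), giving $O(|G|^{6\ell})$ candidate contrivances. Each candidate is processed in $O(|G|^3)$ time by Lemma \ref{cleanprism} and Theorem \ref{thm:detectcleanprism}, for a total of $O(|G|^{3\ell}\cdot|G|^{6\ell}\cdot|G|^3)=O(|G|^{9\ell+3})$.

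The only real ``obstacle'' here is the bookkeeping, since Lemma \ref{contrivance}, Lemma \ref{cleanprism} and Theorem \ref{thm:detectcleanprism} already supply all the nontrivial content. One just has to remember the empty-cleaning case, and tally the parameters so that the exponent lands exactly on $9\ell+3$ rather than larger --- which boils down to observing that $\alpha$ and $h$ add only constant factors once $\mathcal{Q}$ is fixed, and that the frame really is bounded by $3\ell$ vertices.
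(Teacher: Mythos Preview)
Your proof is correct and follows essentially the same approach as the paper: enumerate ordered frames, pass to the tidied subgraph, enumerate candidate contrivances of bounded cost, clean via Lemma~\ref{cleanprism}, and test with Theorem~\ref{thm:detectcleanprism}. The running-time bookkeeping matches the paper's as well.

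One small point worth noting: your explicit extra call to Theorem~\ref{thm:detectcleanprism} on $G'$ itself (before looping over quintuples) is not in the paper's proof, and it actually plugs a minor gap there. The paper's correctness argument invokes Lemma~\ref{contrivance}, whose hypothesis requires $K$ to have at least one $K$-major vertex in $G_1$; if $K$ happens to be already clean in $G_1$, the paper does not say which quintuple would produce a cleaning set $X$ disjoint from $V(K)$. Your direct call handles that case cleanly, at negligible extra cost.
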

\begin{proof}
Fix a linear order of the edges of $G$.
Enumerate all ordered frames $\mathcal{F}$ in $G$. For each one, let $\mathcal{F}$ have frame $F$, and compute $G_1$, the graph obtained 
from $G$ by deleting all vertices not in $F^*$ but with a neighbour in $F^*$, except the ends of $F$. Compute the
linear order of $E(G_1)$ induced from the given linear order of $E(G)$. 
Compute all  quintuples $(x,y,\alpha,h,\mathcal{Q})$ where $x,y\in V(G_1)$, and $\alpha\in \mathcal{Q}^*$, and $h\in \{1,2\}$, 
and $\mathcal{Q}$
is a set of pairwise anticomplete induced paths of $G_1$ with cost at most $6\ell-2$. Apply the algorithm of Lemma \ref{cleanprism} 
to $G_1$, the linear order of $E(G_1)$, $\mathcal{F}$ and 
$(x,y,\alpha,h,\mathcal{Q})$, to obtain a set $X\subseteq V(G_1)$. Apply the algorithm of Theorem
\ref{thm:detectcleanprism} to $G_1\setminus X$, the induced linear order of its edge set, and the given frame. 
If this tells us that $G_1$ has a long near-prism, output this and stop.
If after examining all choices of $(x,y,\alpha,h,\mathcal{Q})$ we have not found a long near-prism, move to the next ordered frame;
and if after examining all ordered frames we have not found a long near-prism, report that there is none.

There are only at most $3\ell$ vertices in a frame, and so only $\mathcal{O}(|G|^{3\ell})$ different ordered frames to examine.
For each one, there are only $\mathcal{O}(|G|^{6\ell})$ different quintuples $(x,y,\alpha,h,\mathcal{Q})$ to check, since 
$\mathcal{Q}$ has cost at most $6\ell-2$ and there are only  at most $6\ell-2$ choices for $\alpha$. For each choice of the 
quintuple, applying  the algorithm of Lemma \ref{cleanprism} 
takes time $\mathcal{O}(|G|^3)$, and then applying the algorithm of Theorem
\ref{thm:detectcleanprism} takes time  $\mathcal{O}(|G|^3)$. So the total running time is  $\mathcal{O}(|G|^{9\ell+3})$.

For correctness, certainly if the algorithm reports a long near-prism then this is correct. To check the converse, suppose that $G$
contains a long near-prism, and let $K$ be the lightest long near-prism. Let $\mathcal{F}$ be an ordered frame for $K$. Since $K$
has a tidy frame in $G_1$, 
Lemma \ref{contrivance} implies that there is a $(K,\mathcal{F})$-contrivance $(x,y,\alpha,h,\mathcal{Q})$ in $G_1$, 
where $\mathcal{Q}$ has cost at most 
$6\ell-2$. When the algorithm checks this ordered frame and this quintuple, the algorithm of Lemma \ref{cleanprism} 
outputs a set $X$ that contains all $K$-major vertices and does not intersect $V(K)$; so $K$ is clean in $G_1\setminus X$. The
algorithm of Theorem
\ref{thm:detectcleanprism}, applied to $G_1\setminus X$ cannot output that there is no long near-prism
that is the lightest among all long near-prisms, and has ordered frame $\mathcal{F}$, and is clean, because there is one.
Thus it will output that $G_1$ contains a long near-prism. This proves correctness, and so proves Theorem \ref{alg:longprisms2}.

\end{proof}

\section{Detecting a clean lightest long even hole}

Let us say a graph $G$ is a {\em candidate}
if it contains no long even hole of length at most $2\ell$, no long jewel of order at most $\ell+1$, no long theta,
no long ban-the-bomb, and no long near-prism. Thus, candidates are prospects.

Let $C$ be a hole in a graph $G$. We recall that a vertex $x\in V(G)\setminus V(C)$ is {\em $C$-major} if no three-vertex
path of $C$ contains
all the neighbours of $x$ in $V(C)$, and $C$ is {\em clean}
if there is no $C$-major vertex.
In this section we provide an algorithm to detect a clean lightest long even hole in a candidate if there is one.
We begin with:

\begin{lem}\label{triad}
Let $G$ be a candidate, and let 
$C$ be a shortest long even hole in $G$, and let $x$ be $C$-major. Then $x$ has three pairwise nonadjacent neighbours 
in $V(C)$, and for every three-vertex path $Q$ of $C$, $x$ has at least two 
neighbours  in $V(C)\setminus V(Q)$.
\end{lem}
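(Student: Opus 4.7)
The plan is to prove the stronger second conclusion (call it $(*)$: every three-vertex path $Q$ of $C$ leaves at least two neighbours of $x$ outside $V(Q)$) first, and deduce the existence of three pairwise nonadjacent neighbours from it. Suppose $(*)$ fails: there is a 3-vertex path $Q=q_1q_2q_3$ of $C$ with at most one neighbour of $x$ in $V(C)\setminus V(Q)$. Since $x$ is $C$-major, $V(Q)$ does not contain all neighbours of $x$, so exactly one neighbour $w$ of $x$ lies outside $V(Q)$. Because $G$ is a candidate, $|E(C)|\ge 2\ell+2$. I would case-split on $|N(x)\cap V(Q)|\in\{1,2,3\}$.

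The general strategy is to examine the holes and induced paths in $G[V(C)\cup\{x\}]$ (which is just $C$ with the $x$-shortcuts through each $V(C)$-neighbour of $x$) and exhibit one of the structures forbidden in a candidate: a long even hole of length strictly less than $|E(C)|$, a long jewel of order at most $\ell+1$, a long theta, or a long ban-the-bomb. For $|N(x)\cap V(C)|=2$ (so $|N(x)\cap V(Q)|=1$) the two neighbours $a,w$ have $C$-distance at least $3$ (else a 3-path contains both) and the two arcs between them have the same parity; either both are even and one of them together with $a\dd x\dd w$ is a long even hole shorter than $C$, or both are odd and either the short arc has length at most $\ell-2$ (giving a long jewel of order at most $\ell-1$) or both have length at least $\ell-1$ (giving a long theta that is genuinely induced because $x$'s only $V(C)$-neighbours are $a,w$). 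For $|N(x)\cap V(Q)|=2$ with the two $Q$-neighbours at distance $2$ (and an analogous adjacent sub-case), the same parity dichotomy handles all sub-cases except the extremal one where the two sub-arcs $P_a,P_b$ from $\{q_1,q_3\}$ to $w$ both have odd length at least $\ell-1$.

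For this extremal sub-case my key observation is that $G[V(C)\cup\{x\}]$ is itself a long ban-the-bomb: take $u=x$, the 4-cycle $x\dd q_1\dd q_2\dd q_3\dd x$ (the possible chord $xq_2$ is absent because $q_2\notin N(x)$), extra vertex $w$ adjacent to $u=x$ and not to $q_1,q_2,q_3$, and paths $P_a^{-1},P_b$ from $w$ to $q_1,q_3$ of lengths at least $\ell-1\ge 2$; the three resulting holes have lengths $|E(P_a)|+2$, $|E(P_b)|+2$, and $|E(C)|$, all at least $\ell+1$, and the anticomplete requirements reduce to $N(x)\cap V(C)=\{q_1,q_3,w\}$. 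The case $|N(x)\cap V(Q)|=3$ is handled in the same spirit: if $w\in\{q_0,q_4\}$ then $x$ has four consecutive $C$-neighbours and the long arc together with the length-$3$ short walk and the length-$2$ shortcut through $x$ gives a long jewel of order $4$; otherwise one obtains a long even hole shorter than $C$, a long jewel of order at most $\ell+1$, or a long ban-the-bomb with the triangle chord $uw'=xq_2$ now present.

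Once $(*)$ is established, the three-pairwise-nonadjacent conclusion is quick: pick any $v\in N(x)\cap V(C)$, apply $(*)$ to the 3-path centred at $v$ to obtain two neighbours $u_1,u_2$ at $C$-distance at least $2$ from $v$, then apply $(*)$ at the 3-path centred at $u_1$ to find a neighbour $u_3$ that is neither $u_1$ nor a $C$-neighbour of $u_1$; then $\{v,u_1,u_3\}$ (or $\{v,u_1,u_2\}$ if $u_2\notin\{v_{prev},v_{next}\}$ of the 3-path at $u_1$) is the desired pairwise nonadjacent triple. The main obstacle I anticipate is the extremal ban-the-bomb sub-case, where no even-hole, jewel or theta argument applies; spotting the ban-the-bomb and checking its anticomplete conditions is the delicate step, and the rest of the argument is careful parity-and-length bookkeeping using the previously established minimality of $C$.
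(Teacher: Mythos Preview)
Your reduction of the first conclusion to $(*)$ has a genuine gap. Consider the configuration where $N(x)\cap V(C)=\{a_1,a_2,b_1,b_2\}$ with $a_1a_2$ and $b_1b_2$ edges of $C$ and both arcs of $C$ between $\{a_1,a_2\}$ and $\{b_1,b_2\}$ of length at least two. Then every three-vertex path $Q$ of $C$ contains at most one of the two edges $a_1a_2$, $b_1b_2$ (since $a_i,b_j$ are pairwise nonadjacent), so $|N(x)\cap(V(C)\setminus V(Q))|\ge 2$ and $(*)$ holds; yet any three of $a_1,a_2,b_1,b_2$ include an adjacent pair, so there is no triple of pairwise nonadjacent neighbours. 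Your proposed ``quick'' deduction fails here: with $v=a_1$, the 3-path centred at $v$ yields $u_1,u_2\in\{b_1,b_2\}$; applying $(*)$ at $u_1=b_1$ returns $u_3\in\{a_1,a_2\}$, and $\{v,u_1,u_3\}$ is either not a triple or contains the edge $a_1a_2$.

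This is precisely the case the paper handles with the long near-prism exclusion (the $k=4$, ``$P_2$ and $P_4$ have length one'' sub-case): the triangles $\{x,a_1,a_2\}$ and $\{x,b_1,b_2\}$ sharing $x$, together with the two arcs of $C$ and the trivial path at $x$, form a near-prism, and the paper's claim~(1) forces both arcs to be long. Your argument never invokes near-prisms, so it cannot close this case. Your proof of $(*)$ itself looks sound (up to the minor threshold $\ell-1$ versus $\ell-2$), and your ban-the-bomb identification for the $q_1,q_3$ sub-case is correct; but to complete the lemma you must separately exclude the two-disjoint-edges configuration, and for that you need either the near-prism (when both arcs are long) or a jewel of order at most $\ell+1$ (when one arc is short), exactly as the paper does.
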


\begin{proof} Since $G$ is a candidate it follows that $C$ has length at least $2\ell+2$.
%HOLE
If $x$ has at least five neighbours in $V(C)$ then both claims are true, so we assume that $x$ has at most four neighbours in $V(C)$,
say $v_1\ll v_k$ in order, where $2\le k\le 4$. If $k=2$ let $P_1,P_2$ be the two 
$v_1v_2$-paths of $C$, and if $k\in \{3,4\}$ let $P_i$ be the $v_iv_{i+1}$-path of $C$ not containing $v_{i+2}$ for $1\le i\le k$
(reading subscripts modulo $k$). 
\\
\\
(1) {\em For $1\le i\le k$, if $P_i$ has length at least $\ell-2$ then $P_i$ is odd, and the path $C\setminus P_i^*$ has length at least 
$\ell+2$.}
%JEWEL
\\
\\
If $P_i$ has length at least $\ell-2$, then (reading subscripts modulo $k$) the hole $x\dd v_i\dd P_i\dd v_{i+1}\dd x$ is long 
and shorter than $C$, and therefore odd, and so $P_i$ is odd. Consequently $C\setminus P_i^*$ is also odd, since $C$ is even; and hence the paths 
$C\setminus P_i^*$, $v_1\dd x\dd v_2$ and $v_1\dd P_i\dd v_2$ form a long jewel, which therefore has order at
least $\ell+2$, that is, $C\setminus P_i^*$ has length at least  
$\ell+2$.
%JEWEL
This proves (1).

\bigskip

Let $P_1$ be the longest of $P_1\ll P_k$.
If $k=2$, then $P_1$ is long, and so (1) implies that the paths $P_1$, $P_2$ and $v_1\dd x\dd v_2$ form a long theta, a contradiction, 
so $k\ge 3$. Suppose that $k=3$. If $P_2,P_3$ both have length at least three then both claims are true, so we may assume that
$P_2$ has length at most two. So $P_1$ is long, and hence so is $P_3$, by (1), and therefore they are both odd, by (1) again.
Thus $P_2$ is even, and so has length two,
and hence $G[V(C)\cup \{x\}]$
is a long ban-the-bomb, a contradiction. This proves that $k=4$.

If two of $P_2,P_3,P_4$ have length at least two, then both claims are true; so we may assume that $P_2$ has length one, and one
of $P_3,P_4$ has length one, and therefore $P_1$ is long. Now there are two cases. If $P_3$ has length one then 
$P_4$ is long, by (1), and so $G[V(C)\cup \{x\}]$
is a long ban-the-bomb, a contradiction; and if $P_4$ has length one then $P_3$ is long, by (1), and  $G[V(C)\cup \{x\}]$
is a long near-prism, a contradiction. This proves Lemma \ref{triad}.

\end{proof}

Let $C$ be a shortest long even hole. For $u,v$ distinct and non-adjacent vertices in $V(C)$ we call a
$uv$-path $Q$ a {\em shortcut} if $V(Q)$ contains no $C$-major vertices and $Q$ has length less than $d_C(u,v)$.
We begin by proving the following.

\begin{theorem} \label{thm:shortcuts}
Let $G$ be a candidate and let $C$ be a shortest long even hole in $G$.
Then $C$ has no shortcut.
\end{theorem}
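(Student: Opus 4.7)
The plan is to assume for contradiction that some shortcut of $C$ exists, and pick one, call it $Q$ with endpoints $u, v$, such that $|E(Q)|$ is minimum over all shortcuts of $C$. Let $R_1, R_2$ be the two $uv$-arcs of $C$, labelled so that $|E(R_1)| = d_C(u,v) \le |E(R_2)|$; then $|E(Q)| < |E(R_1)|$, and since $G$ is a candidate and hence contains no long even hole of length at most $2\ell$, we have $|E(C)| \ge 2\ell + 2$, which forces $|E(R_2)| \ge \ell + 1$. The strategy is to use $Q$, $R_1$, $R_2$ to manufacture one of the forbidden configurations that $G$ is assumed to avoid.

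The first step is to establish three structural properties of $Q$. The path $Q$ must be induced, since any chord would produce a strictly shorter $uv$-path whose vertex set is contained in $V(Q)$ and therefore still free of $C$-major vertices, contradicting minimality. Next, $V(Q^*) \cap V(C) = \emptyset$: if $w$ were such an interior vertex then $w \ne u,v$, and the triangle inequality
\[
|E(Q)| \;=\; d_Q(u,w) + d_Q(w,v) \;<\; d_C(u,v) \;\le\; d_C(u,w) + d_C(w,v)
\]
would force one of $u \dd Q \dd w$ or $w \dd Q \dd v$ to be a strictly shorter shortcut (degenerate cases where the endpoints become adjacent force a zero-length path and hence equal endpoints). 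Third, and most importantly, $V(Q^*)$ is anticomplete to $V(C) \setminus \{u,v\}$: suppose $w \in V(Q^*)$ has a neighbour $y \in V(C) \setminus \{u,v\}$, and take $w$ with $d_Q(u,w)$ minimum among such pairs. Since $w$ is not $C$-major, $N_C(w)$ is confined to a three-vertex subpath of $C$, which allows one to bypass a long arc of $R_1$ or $R_2$ by jumping along $Q$ to $w$ and then to $y$; minimality of $|E(Q)|$ over all shortcuts of $C$ (not just $uv$-shortcuts) then forces a strictly shorter shortcut between a non-adjacent pair of $V(C)$-vertices, a contradiction.

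With these three properties in hand, both $C_i := Q \cup R_i$ ($i = 1,2$) are induced cycles, and their lengths have the same parity because $|E(R_1)| + |E(R_2)| = |E(C)|$ is even. If this common parity is even, then $C_2$ has even length $|E(Q)| + |E(R_2)| < |E(R_1)| + |E(R_2)| = |E(C)|$ and at least $|E(R_2)| \ge \ell + 1 \ge \ell$, so $C_2$ is a long even hole strictly shorter than $C$, contradicting the choice of $C$. If this common parity is odd, then $Q$ has parity opposite to $R_1$ and to $R_2$, and $Q, R_1, R_2$ are three pairwise internally disjoint induced $uv$-paths with pairwise-anticomplete interiors. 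Since $|E(R_2)| \ge \ell + 1 \ge \ell - |E(Q)|$, the third path is long enough to serve as the ``spine'' of a long jewel. If $|V(R_1)| \le \ell + 1$ then $Q$ and $R_1$ (of opposite parity) together with $R_2$ form a long jewel of order at most $\ell + 1$; otherwise $|E(R_1)| \ge \ell + 1$, each of the three cycles $C_1, C_2, R_1 \cup R_2 = C$ has length at least $\ell$, and $Q, R_1, R_2$ form a long theta. In all cases we obtain a configuration that a candidate is assumed not to contain, completing the contradiction.

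The main obstacle is the third bullet of the structural step, ruling out chords from $Q^*$ to $V(C) \setminus \{u,v\}$; the parity case analysis and the identifications with shorter long even holes, long jewels, and long thetas are then essentially forced by the minimality of $|E(Q)|$ and the bound $|E(R_2)| \ge \ell + 1$.
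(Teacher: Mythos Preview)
Your argument has a genuine gap at the third structural claim, that $Q^*$ is anticomplete to $V(C)\setminus\{u,v\}$. The sketched justification (``jump along $Q$ to $w$ and then to $y$, producing a strictly shorter shortcut'') does not go through. Concretely: suppose $|E(R_1)|=|E(Q)|+1$, let $c_1$ be the neighbour of $u$ in $R_1$, and suppose $q_1$ (the internal vertex of $Q$ adjacent to $u$) is also adjacent to $c_1$. Since $q_1$'s neighbours in $V(C)$ lie in the two-vertex path $u\dd c_1$, $q_1$ is not $C$-major, so this is compatible with $Q$ being a shortcut. The path $u\dd q_1\dd c_1$ has adjacent ends and is not a shortcut; the path $c_1\dd q_1\dd Q\dd v$ has length $|E(Q)|=|E(R_1)|-1=d_C(c_1,v)$, so it is not a shortcut either. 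No strictly shorter shortcut arises, and minimality of $|E(Q)|$ gives no contradiction. With $q_1c_1$ present, $Q\cup R_1$ is not a hole, so your parity case analysis collapses.

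The paper's proof is considerably more delicate precisely because this anticompleteness fails. It adds a secondary optimisation (among shortcuts of minimum length, maximise $d_C(u,v)$), and uses Lemma~\ref{triad} to show that the neighbours of $q_1$ (and of $q_k$) in $V(C)$ form a clique. It then proves that if none of the ``deep'' internal vertices $q_2,\ldots,q_{k-1}$ has a neighbour in $V(C)$ one obtains a long theta, long near-prism, or long jewel (this part resembles your parity argument, but must accommodate the extra edge from $q_1$ or $q_k$). Having established that some deep $q_i$ \emph{does} have a neighbour in $V(C)$, the proof pins down exactly which edges can occur between $Q^*$ and $V(C)$, reroutes part of $C$ through $Q$ to get another shortest long even hole $C'$, and finishes with Lemma~\ref{triad} applied to a $C'$-major vertex. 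Your final paragraph is right that the parity/jewel/theta endgame is essentially forced once anticompleteness holds; the point is that it does not hold, and bridging that gap is most of the work.
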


\begin{proof}
Suppose that $G$ has a shortest long even hole $C$ with a shortcut $Q$. Thus $|E(C)|\ge 2\ell+2$, since $G$ is a candidate.
%HOLE
Choose $C,Q$ to minimize $|E(Q)|$, and subject to that, to maximize $d_C(u,v)$, where $u,v$ are the ends of $Q$. 
It follows that $Q^*\cap V(C)=\emptyset$. 
Let $Q$ have vertices $u \dd q_1 \dd q_2 \cc q_k \dd v$ in order. It follows that $Q$ has length $k+1$, and so 
$d_C(u,v)\ge k+2$. Consequently $k>1$, since $Q$ contains no $C$-major vertices.
\\
\\
(1) {\em The set of neighbours of $q_1$ in $V(C)$ is a clique, and the same holds for $q_k$, and $q_1,q_k$ have no common 
neighbour in $V(C)$.}
\\
\\
Suppose that $q_1$ has two nonadjacent neighbours in $V(C)$, say $x,y$. Since $q_1$ is not $C$-major, there is a vertex $z$ of $C$
such that $x\dd z\dd y$ is a path of $C$, and every neighbour of $q_1$ in $V(C)$ is one of $x,y,z$. Let $C'$ be the hole induced on
$(V(C)\setminus \{z\})\cup \{q_1\}$. Then $C'$ has the same length as $C$, and so is a shortest even hole, and
$d_{C'}(q_1,v)=d_C(z,v)\ge d_C(u,v)-1$. Let $Q'= Q\setminus \{u\}$. From the choice of $C,Q$ it follows that $Q'$ is not a shortcut for $C'$, and 
so some vertex of $Q'$ is $C'$-major, and hence is adjacent to $q_1$. Consequently $q_2$ is $C'$-major, and yet all its neighbours
in $V(C')$ except $q_1$ lie in a three-vertex path of $C$ and hence of $C'$, contrary to Lemma \ref{triad}.
This proves the first 
assertion of (1). The second follows since $d_C(u,v)\ge k+2\ge 4$. This proves (1).
\\
\\
(2) {\em If $1\le i\le k$ and $q_i$ is adjacent to $w\in V(C)\setminus \{u,v\}$, then $d_C(u,w)=|E(R)|$, where $R$ is the $uw$-path
of $C\setminus \{v\}$. The same holds with $u,v$ exchanged.}
\\
\\
Suppose not; then the shorter of the two $uw$-paths of $C$ strictly includes one of the $uv$-paths of $C$, and so has length more than $d_C(u,v)$,
contradicting the choice of $C,Q$. This proves (2).
\\
\\
(3) {\em One of $q_2 \ll q_{k-1}$ has a neighbour in $V(C)$.}
\\
\\
Suppose not. By (1), $q_1$ either has one, or two adjacent, neighbours in $V(C)$, and the same for $q_k$. There are two minimal
paths $R_1,R_2$ of $C$ with one end adjacent to $q_1$ and the other to $q_k$, and since the sum of their lengths is at least 
$|E(C)|-2\ge 2\ell$, we may assume that $R_2$ is long. 
Let the ends of $R_2$ be $u', v'$ where $u'$ is adjacent to $q_1$ and $v'$ to $q_k$. Let $S$ be the $u'v'$-path of $C$
different from $R_2$, and let $Q'$ be the path $u'\dd q_1\dd Q\dd q_k\dd v'$.
Now $Q'$ has the same length as $Q$, and therefore less than $d_C(u,v)\le |E(S)|$. Consequently the hole
$Q'\cup R_2$ has length less than $C$, and it is long and therefore odd. So $Q', S$ have different parity.
If $R_1$ is not long, then $S$ has length at most $\ell+1$, and so does $Q$, and hence the paths $S,Q', R_2$
form a long jewel of order at most $\ell+1$, a contradiction. So $R_1$ is long.
%JEWEL

Not both $q_1,q_k$ have a unique neighbour in $V(C)$, since $G$ contains no long theta, and they do not both have two adjacent neighbours, since
$G$ contains no long near-prism. Thus we may assume that $q_1$ has two adjacent neighbours $x,v'$ in $V(C)$, and $q_k$ has exactly one 
(namely $v=v'$). Since $Q', S$ have different parity, it follows that the hole $x\dd q_1\dd Q\dd v\dd R_1\dd x$ is long, even,
and shorter than $C$, a contradiction.
This proves (3). 

\bigskip

Let $L_1$ be a $uv$-path of $C$ such that one of $q_2 \ll q_{k-1}$ has a neighbour in $V(L_1)$, and let $L_2$ be the other $uv$-path
of $C$. 
\\
\\
(4) {\em $Q^*$ is anticomplete to $L_2^*$.}
\\
\\
Choose $i\in \{2\ll k-1\}$ such that $q_i$ has a neighbour $w_1$ in $V(L_1)$, and suppose that there exists $j\in \{1\ll k\}$
such that $q_j$ has a neighbour $w_2$ in $V(L_2)$. By exchanging $u,v$ if necessary, we may assume that $i\le j$.
See figure \ref{fig:step4}.

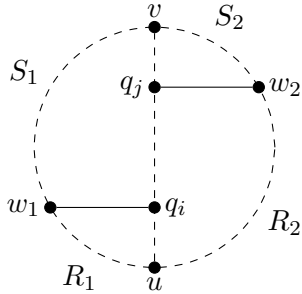
\begin{figure}[H]
\centering

\begin{tikzpicture}[scale=0.8,auto=left]
\tikzstyle{every node}=[inner sep=1.5pt, fill=black,circle,draw]

\def\r{2}
\def\s{2.5}
\draw[domain=0:360,smooth,variable=\x,dashed] plot ({\r*cos(\x)},{\r*sin(\x)});
\node (u) at ({\r*cos(270)},{\r*sin(270)}) {};
\node (v) at ({\r*cos(90)},{\r*sin(90)}) {};
\draw[dashed] (u)--(v);
\node (w1) at ({\r*cos(210)},{\r*sin(210)}) {};
\node (w2) at ({\r*cos(30)},{\r*sin(30)}) {};
\node (qi) at (0,{\r*sin(210)}) {};
\node (qj) at (0,{\r*sin(30)}) {};

\draw (qi)--(w1);
\draw (qj)--(w2);

\tikzstyle{every node}=[]
\draw (u) node [below]           {$u$};
\draw (v) node [above]           {$v$};
\draw (w1) node [left]           {$w_1$};
\draw (w2) node [right]           {$w_2$};
\draw (qi) node [right]           {$q_i$};
\draw (qj) node [left]           {$q_j$};
\node at ({\s*cos(240)}, {\s*sin(240)}) {$R_1$};
\node at ({\s*cos(150)}, {\s*sin(150)}) {$S_1$};
\node at ({\s*cos(330)}, {\s*sin(330)}) {$R_2$};
\node at ({\s*cos(60)}, {\s*sin(60)}) {$S_2$};

\end{tikzpicture}
\caption{For step (4).} \label{fig:step4}
\end{figure}

From the choice of $C,Q$ it follows that $w_1,w_2\ne u,v$.
For $i = 1,2$, let $R_i$ be the $uw_i$-path of $C\setminus \{v\}$, and let $S_i$ be the $vw_i$-path of $C\setminus \{u\}$.
Let $R_i, S_i$ have length $r_i, s_i$ for $i = 1,2$.
Each of the paths $u\dd q_1\dd q_i\dd w_1$, $w_1\dd q_i\cc q_j\dd w_2$, $w_2\dd q_j\dd q_k\dd v$ is strictly shorter than $Q$
(because $2\le i\le k-1$), and hence none of them is a shortcut. From (2), it follows that $r_1=d_C(u,w_1)\le i+1$, and 
$d_C(w_1,w_2)\le j-i+2$, and $s_2\le k-j+2$. Adding, we deduce that
$$r_1+d_C(w_1,w_2)+s_2\le (i+1)+(j-i+2)+(k-j+2)=k+5=|E(Q)|+4.$$

But $d_C(w_1,w_2)=\min(r_1+r_2,s_1+s_2)$. Suppose that $d_C(w_1,w_2)=r_1+r_2$. It follows that $r_1+(r_1+r_2)+s_2\le |E(Q)|+4$,
but $r_2+s_2>|E(Q)|$ since $Q$ is a shortcut, and so $r_1\le 1$, and therefore $d_C(w_1,v)\ge d_C(u,v)-1$. But $i>1$, and so 
$w_1\dd q_i\cc q_k\dd v$ is a shortcut for $C$, contradicting the choice of $C,Q$. Thus $d_C(w_1,w_2)=s_1+s_2< r_1+r_2$.

Hence $r_1 + (s_1+s_2) + s_2\le |E(Q)|+4$. But $r_1+s_1\ge d_C(u,v)>|E(Q)|$, and so $s_2=1$. Since $u\dd Q\dd q_j\dd w_2$
is not a shortcut for $C$ that is shorter than $Q$, it follows that $j=k$. Since $r_1\le i+1$ and 
$$s_1+1=d_C(w_1,w_2)\le j-i+2=k-i+2,$$
we deduce (adding) that $r_1+s_1\le k+2$. But $r_1+s_1>|E(Q)|=k+1$, and so equality holds; that is, $r_1=i+1$
and $s_1=k-i+1$, and $r_1+s_1=|E(Q)|+1$. Hence $r_1+s_1\le r_2+s_2$; and since $d_C(u,w_2)\le d_C(u,v)$ (from the choice
of $C,Q$, since otherwise $u\dd Q\dd q_k\dd w_2$ would be a shortcut for $C$ contrary to the choice of $u,v$), it follows 
that $r_2+s_2=r_1+s_1=|E(C)|/2$. Moreover,
we showed that $q_j=q_k$ and $w_2$ is adjacent to $v$, on the assumption that $i\le j$; and it follows from the symmetry
that the only edges between $Q^*$ and $L_2^*$ are the edge $q_kw_2$ and possibly an edge from $q_1$ to the neighbour ($w_3$) say
of $u$ in $L_2$, say $w_3$. If the latter edge does not exist, then $u\dd Q\dd q_k\dd w_2\dd L_2\dd u$ is an even hole,
of length $|E(C)|-2$, a contradiction; so $q_1$ is adjacent to $w_3$. We already showed that $d_C(w_1,w_2)=s_1+1$, and 
it follows by the same argument with $u,v$ exchanged that
$d_C(w_1,w_3)=r_1+1=i+2$, and so the path $w_3\dd q_1\cc q_i\dd w_1$ is a shortcut for $C$, a contradiction. This proves (4).
\\
\\
(5) {\em $|E(L_1)|=|E(Q)|+1\le |E(L_2)|$.}
\\
\\
Choose $i\in \{2\ll k-1\}$ such that $q_i$ has a neighbour $w\in L_1^*$. Since $u\dd Q\dd q_i\dd w$ and $w\dd q_i\cc Q\dd v$ 
are not shortcuts for $C$, it follows from (2) that the sum of their lengths is at least $|E(L_1)|$, and so $|E(L_1)|\le |E(Q)|+2$. Since one of $L_1,L_2$
is long (because $|E(C)|\ge 2\ell$), it follows that the hole $Q\cup L_2$ is long, and shorter than $C$, and therefore odd;
%HOLE
and so $Q,L_2$ have opposite parity. Since $L_1,L_2$ have the same parity, and $|E(L_1)|>|E(Q)|\ge |E(L_1)|-2$, we deduce that
$|E(L_1)|=|E(Q)|+1$. Since $|E(L_2)|>|E(Q)|=|E(L_1)|-1$ it follows that $|E(L_1)|\le |E(L_2)|$. This proves (5).

\bigskip

By (5), we may number the vertices of $L_1$ as $u\dd c_1\cc c_{k+1}\dd v$ in order. 
\\
\\
(6) {\em For $1\le i\le k$, if $q_i$ is adjacent to $c_j$ where $1\le j\le k+1$, then $j\in \{i,i+1\}$.}
\\
\\
If $i\in \{1,k\}$ this is true since $q_1,q_k$ are not $C$-major, so we may assume that $2\le i\le k-1$. The path $u\dd Q\dd q_i\dd c_j$ has length $i+1$, 
shorter than $Q$, and so is not a shortcut; and hence by (2), $j=d_C(u,c_j)\le i+1$. Since
$c_j\dd q_i\dd Q\dd v$ is not a shortcut, it follows that $k+2-j=d_C(v,c_j)\le k+2-i$, and so $i\le j$. This proves (6).

\bigskip
By (3), (4) and (6), there exists $i\in \{2\ll k-1\}$ such that $q_i$ is adjacent to one of $c_i, c_{i+1}$, and by exchanging $u,v$
if necessary, we may assume that $q_i$ is adjacent to $c_i$. By (6), 
$$u\dd c_1\cc c_i\dd q_i\dd Q\dd v\dd L_2\dd u$$
is a hole $C'$ say. Since the paths $c_i\dd c_{i+1}\cc c_{k+1}\dd v$
and $c_i\dd q_i\cc q_k\dd v$ have the same length, it follows that $C'$ has the same length as $C$, and so is a shortest long 
even hole. From the choice of $C,Q$, the path $u\dd q_1\cc q_i$ is not a shortcut for $C'$. But its length is $i<d_{C'}(u,q_i)$,
and so one of its vertices is $C'$-major. Hence there exists $h\in \{1\ll i-1\}$ such that $q_h$ is $C'$-major and not
$C$-major, and so $q_h$ has a neighbour in $\{q_i\ll q_k\}$. But $q_h$ is nonadjacent to $\{q_{i+1}\ll q_k\}$, 
and therefore $h=i-1$, so $q_{i-1}$ is $C'$-major. By Lemma \ref{triad}, at least two neighbours in $q_{i-1}$ in $V(C)$ are not in 
$\{c_{i-1}, c_i, q_i\}$, contrary to (4) and (6).
This proves Theorem \ref{thm:shortcuts}.

\end{proof}

We will also need:

\begin{theorem}\label{thm:easyreroute}
Let $C$ be a clean shortest long even hole in a candidate $G$.
Let $u,v$ be distinct, non-adjacent vertices in $V(C)$ with $d_C(u,v)\le |E(C)|/2-2$, and let $L_1$, $L_2$ be the two $uv$-paths of $C$ 
where $|E(L_1)| \le  |E(C)|/2-2$. Then
$P \cup L_2$ is a shortest long even hole for every shortest $uv$-path $P$ in $G$.
\end{theorem}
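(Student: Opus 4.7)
The plan is to proceed in four steps: first establish that $|E(P)| = |E(L_1)|$; then show $V(P^*) \cap V(L_2^*) = \emptyset$, making $P \cup L_2$ a cycle of length $|E(C)|$; next (the main obstacle) show this cycle has no chord; finally conclude that $P \cup L_2$ is a shortest long even hole.

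Since $L_1$ is a $uv$-path in $G$, $|E(P)| \le |E(L_1)| = d_C(u,v)$. If $|E(P)| < d_C(u,v)$, then because $C$ is clean no vertex of $P$ is $C$-major, so $P$ would be a shortcut for $C$, contradicting Theorem \ref{thm:shortcuts}. Hence $|E(P)| = |E(L_1)|$. For the second step, suppose $w \in V(P^*) \cap V(C) \setminus \{u,v\}$. Since subpaths of the shortest path $P$ are shortest, $|E(u \dd P \dd w)| = d_G(u,w)$; and since $C$ has no shortcut (Theorem \ref{thm:shortcuts}), $d_G(u,w) = d_C(u,w)$, and similarly for $w$ and $v$ (noting that adjacency in $G$ between vertices of $C$ agrees with adjacency in $C$). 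Adding gives $d_C(u,w) + d_C(w,v) = |E(P)| = d_C(u,v)$, so $w$ lies on a $uv$-geodesic in $C$; since $d_C(u,v) < |E(C)|/2$, the unique such geodesic is $L_1$, giving $w \in V(L_1^*)$. In particular $V(P^*) \cap V(L_2^*) = \emptyset$, so $P \cup L_2$ is a cycle of length $|E(P)| + |E(L_2)| = |E(C)|$.

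The hard part is showing $P \cup L_2$ is induced. I would argue by contradiction: suppose $xy$ is a chord with $x \in V(P^*)$ and $y \in V(L_2^*)$. The triangle inequality combined with Theorem \ref{thm:shortcuts} (applied to $y \in V(C)$) yields
$$d_C(u,y) + d_C(v,y) \le d_G(u,x) + d_G(x,v) + 2 = |E(P)| + 2 = |E(L_1)| + 2.$$
Setting $a = |E(u \dd L_2 \dd y)|$ and $b = |E(y \dd L_2 \dd v)|$, so $a+b = |E(L_2)|$, we have $d_C(u,y) = \min(a, |E(L_1)|+b)$ and $d_C(v,y) = \min(b, |E(L_1)|+a)$. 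If $|a - b| \le |E(L_1)|$, the sum equals $a+b = |E(L_2)| \ge |E(L_1)|+4$, contradicting the displayed bound. Otherwise, by symmetry assume $a > b + |E(L_1)|$; the sum becomes $|E(L_1)| + 2b \le |E(L_1)|+2$, forcing $b=1$, so $y$ is the neighbour of $v$ on $L_2$. Then $d_G(u,y) = d_C(u,y) = |E(L_1)|+1$, so $d_G(u,x) \ge |E(L_1)|$; but $d_G(u,x) + d_G(x,v) = |E(P)| = |E(L_1)|$ forces $d_G(x,v) = 0$, i.e., $x = v$, contradicting $x \in V(P^*)$.

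Thus $P \cup L_2$ is an induced cycle of length $|E(C)|$, which is even and at least $\ell$. Since $C$ is a shortest long even hole and $|E(P \cup L_2)| = |E(C)|$, $P \cup L_2$ is also a shortest long even hole.
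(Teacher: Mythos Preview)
Your proof is correct and follows essentially the same route as the paper's: both reduce to the no-shortcuts theorem (Theorem~\ref{thm:shortcuts}) and a distance count showing that any contact between $P^*$ and $L_2^*$ would force $|E(P)|+2\ge |E(L_2)|\ge |E(L_1)|+4$.

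The only difference is organizational. The paper treats ``$p_i$ equals some $w\in L_2^*$'' and ``$p_i$ is adjacent to some $w\in L_2^*$'' in one stroke: from $i+1\ge d_C(u,w)$ and $i+1\le k+1=|E(L_1)|$ it observes that the shorter $uw$-path of $C$ must lie inside $L_2$, so $d_C(u,w)=d_{L_2}(u,w)$; the symmetric bound on the $v$-side then gives the contradiction directly, with no case split. You instead separate the ``equal'' case (your step~2) from the ``adjacent'' case (your step~3), and in step~3 you split on whether $|a-b|\le |E(L_1)|$. This works, but the extra case where $|a-b|>|E(L_1)|$ is avoidable: in your chord situation one also has $d_G(u,x)\le |E(P)|-1=|E(L_1)|-1$, so $d_C(u,y)\le |E(L_1)|$, which already forces $d_C(u,y)=a$; the same on the $v$-side gives $d_C(v,y)=b$, and you land immediately in your first case. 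That is precisely the shortcut the paper takes.
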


\begin{proof}
Let $P$ be a shortest $uv$-path in $G$, with vertices $u\dd p_1\cc p_k\dd v$. Since
$C$ is clean, it follows from Theorem \ref{thm:shortcuts} that $P$ has the same length as $L_1$.
Suppose that for some $i\in \{1\ll k\}$,
$p_i$ is equal or adjacent to some $w\in L_2^*$. By Theorem \ref{thm:shortcuts}, 
the path $u\dd p_1\cc p_i\dd w$ (or 
$u\dd p_1\cc p_i$ if $w=p_i$) is not a shortcut for $C$,
and so $i+1\ge d_C(u,w)$. Since $i+1\le k+1=|E(L_1)|$ it follows that the shorter $uw$-path of $C$ is a subpath of $L_2$,
and hence $i+1\ge d_C(u,w)=d_{L_2}(u,w)$. Similarly $k-i+2\ge d_{L_2}(w,v)$. Consequently 
$$|E(P)|+2=k+3\ge d_{L_2}(u,w)+d_{L_2}(w,v)\ge d_{L_2}(u,v)=|E(L_2)|\ge |E(L_1)|+4,$$
a contradiction. This proves Theorem \ref{thm:easyreroute}.

\end{proof}

This can be strengthened: it is shown in~\cite{lindathesis} that 
\begin{theorem}\label{thm:cleanshortest}
Let $C$ be a clean shortest long even hole in a candidate $G$.
Let $u,v$ be distinct, non-adjacent vertices in $V(C)$, and let $L_1$, $L_2$ be the two $uv$-paths of $C$ 
where $|E(L_1)| \leq |E(L_2)|$. Then
for every shortest $uv$-path $P$ in $G$, either $P \cup L_2$ is a clean shortest long even hole in $G$, or $|E(L_1)| = |E(L_2)|$ 
and $P \cup L_1$ is a clean shortest long even hole in $G$.
\end{theorem}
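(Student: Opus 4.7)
The plan is to first pin down the length of $P$ via Theorem \ref{thm:shortcuts}, then produce the new shortest long even hole (using Theorem \ref{thm:easyreroute} in the ``large parity gap'' regime, and by direct candidate-graph arguments in the two boundary cases), and finally upgrade the conclusion to ``clean'' via Lemma \ref{triad} applied to the new hole together with a rerouting contradiction.

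First I would show $|E(P)|=|E(L_1)|$: since $C$ is clean, Theorem \ref{thm:shortcuts} rules out shortcuts of $C$, so any $uv$-path in $G$ has length at least $d_C(u,v)=|E(L_1)|$, and since $P$ is a shortest $uv$-path, equality must hold. Because $|E(C)|=|E(L_1)|+|E(L_2)|$ is even, $|E(L_2)|-|E(L_1)|\in\{0,2,4,6,\dots\}$. When $|E(L_2)|-|E(L_1)|\ge 4$, Theorem \ref{thm:easyreroute} directly gives that $P\cup L_2$ is a shortest long even hole. In the two boundary cases ($|E(L_2)|-|E(L_1)|\in\{0,2\}$), I would argue by hand: if $P^*$ were equal or adjacent to some $w\in L_2^*$, applying the no-shortcut statement of Theorem \ref{thm:shortcuts} to the subpaths of $P$ ending at $w$ forces either a hole strictly shorter than $C$ or one of the forbidden long subgraphs in the definition of ``candidate'' (long jewel, theta, ban-the-bomb, near-prism). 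In the balanced case $|E(L_1)|=|E(L_2)|$ the argument is symmetric in $L_1,L_2$, so $P^*$ is disjoint from and anticomplete to one of $L_1^*,L_2^*$, and hence one of $P\cup L_1,P\cup L_2$ is an induced cycle of length $|E(C)|$, and so a shortest long even hole. Call this hole $H$, and write $V(H)=V(P)\cup V(L)$ with $L\in\{L_1,L_2\}$.

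To show $H$ is clean, suppose for contradiction that some $x\in V(G)\setminus V(H)$ is $H$-major. Since $C$ is clean, the neighbours of $x$ in $V(C)$ lie in a three-vertex path $Q_C$ of $C$, and so the neighbours of $x$ in $V(L)\subseteq V(C)$ lie in a three-vertex path of $L$ (hence of $H$). By Lemma \ref{triad} applied to the shortest long even hole $H$, $x$ has three pairwise non-adjacent neighbours in $V(H)$ and at least two of them lie outside every three-vertex subpath of $H$. This forces $x$ to have at least one neighbour $p\in P^*$ together with at least one neighbour $w\in V(L)$ that is nonadjacent to $p$ in $H$. Combining the length-two detour $p\dd x\dd w$ with appropriately chosen arcs of $H$ should then produce either a cycle shorter than $|E(C)|$ (contradicting that $C$ is a shortest long even hole) or one of the forbidden long configurations (contradicting that $G$ is a candidate).

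The main obstacle is precisely this final rerouting step: one has to split into subcases according to whether $Q_C\subseteq V(L)$, $Q_C\subseteq V(C)\setminus V(L)$, or $Q_C$ straddles $u$ or $v$, and in each subcase select the correct arc of $H$ to combine with $p\dd x\dd w$ so that the resulting cycle is an even hole shorter than $C$, or a long jewel, theta, ban-the-bomb, or near-prism. The bookkeeping is analogous to the case analysis already driving the proof of Theorem \ref{thm:shortcuts}, now applied to $H$ in place of $C$, with the $H$-major-but-not-$C$-major pattern of $x$ playing the role formerly played by the shortcut.
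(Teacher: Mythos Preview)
The paper does not actually prove Theorem~\ref{thm:cleanshortest}. Immediately after stating it, the authors write ``We will not need this stronger form, however, so we omit it here,'' and attribute the result to~\cite{lindathesis}. There is therefore no in-paper proof to compare your proposal against.

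On the substance of your plan: the reduction $|E(P)|=|E(L_1)|$ via Theorem~\ref{thm:shortcuts} is correct, and invoking Theorem~\ref{thm:easyreroute} when $|E(L_2)|-|E(L_1)|\ge 4$ is exactly right (note though that \ref{thm:easyreroute} only gives ``shortest long even hole'', not ``clean'', so the cleanness step is still needed there too). The genuine work, as you correctly identify, lies in the two boundary cases $|E(L_2)|-|E(L_1)|\in\{0,2\}$ and in the cleanness upgrade. Your description of these steps is currently an outline rather than an argument: the phrase ``forces either a hole strictly shorter than $C$ or one of the forbidden long subgraphs'' hides precisely the place where the proof of Theorem~\ref{thm:easyreroute} breaks down (its final inequality becomes vacuous when $|E(L_1)|=|E(C)|/2-1$), and the cleanness paragraph is similarly schematic. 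In particular, in the balanced case one must actually establish that $P^*$ is anticomplete to at least one of $L_1^*,L_2^*$; it is not obvious a priori that $P$ cannot touch both, and ruling this out is where the candidate hypotheses (no long theta, no long near-prism, no long ban-the-bomb, no long jewel of bounded order) must be invoked carefully. So the skeleton is sound, but what you have flagged as ``the main obstacle'' is indeed the entire content of the proof, and the proposal does not yet supply it.
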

We will not need this stronger form, however, so we omit it here.

Let us fix a linear order of the edges of $G$; then we can search for a lightest long even hole, instead of just a shortest one, and
it is easier to find if it exists. For instance, from Theorem \ref{thm:easyreroute} we obtain
\begin{theorem}\label{thm:lexeasyreroute}
Let $C$ be a lightest long even hole in a candidate $G$.
Let $u,v$ be distinct, non-adjacent vertices in $V(C)$ with $d_C(u,v)\le |E(C)|/2-2$, and let $L_1$, $L_2$ be the two $uv$-paths of $C$
where $|E(L_1)| \le  |E(C)|/2-2$. Then $L_1$ is the lightest $uv$-path in $G$ that contains no $C$-major vertices.
\end{theorem}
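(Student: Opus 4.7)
The plan is to pass from $G$ to the subgraph $G' := G \setminus X$, where $X$ is the set of $C$-major vertices, and then apply Theorem~\ref{thm:easyreroute}. First I would check that $G'$ inherits the hypotheses of that theorem: since $G'$ is an induced subgraph of $G$, every induced subgraph of $G'$ is an induced subgraph of $G$, so $G'$ has no long even hole of length at most $2\ell$, no long jewel of order at most $\ell+1$, no long theta, no long ban-the-bomb and no long near-prism; that is, $G'$ is a candidate. By the definition of $C$-major we have $X \cap V(C) = \emptyset$, so $C$ survives as a hole in $G'$ and is clean there; and $C$ is still a shortest long even hole in $G'$, because every long even hole in $G'$ is one in $G$.

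Next, let $P$ be the lightest $uv$-path in $G$ containing no $C$-major vertex; equivalently $P$ is the lightest $uv$-path in $G'$, and it exists because $L_1$ qualifies. Since each edge has weight in $(1,2)$, any $uv$-path with strictly more edges is strictly heavier than any $uv$-path with fewer edges; thus the lightest $uv$-path in $G'$ is also a shortest $uv$-path in $G'$. Applying Theorem~\ref{thm:easyreroute} inside $G'$ (with the same $u,v,L_1,L_2$, whose numerical hypotheses are inherited unchanged), $P \cup L_2$ is a shortest long even hole in $G'$, hence in $G$; in particular $|E(P)| = |E(C)| - |E(L_2)| = |E(L_1)|$.

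The last step compares weights. Since $P \cup L_2$ and $C = L_1 \cup L_2$ have the same number of edges and share $E(L_2)$, the comparison reduces to comparing the weights of $E(P)$ and $E(L_1)$. If $P \ne L_1$, then because the weighting scheme forbids ties $P$ is strictly lighter than $L_1$, so $P \cup L_2$ is a long even hole strictly lighter than $C$; this contradicts the choice of $C$ as the lightest long even hole in $G$. Hence $P = L_1$.

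The only nontrivial step is the verification that passing from $G$ to $G'$ preserves all of the hypotheses needed to invoke Theorem~\ref{thm:easyreroute}, namely candidate-hood of $G'$, cleanness of $C$ in $G'$, and $C$ remaining a shortest long even hole in $G'$. Once these are in hand the rest is essentially bookkeeping handled uniformly by the ``lightest = fewest edges, then lex-earliest'' convention on edge weights.
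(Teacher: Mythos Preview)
Your proof is correct and follows essentially the same approach as the paper: delete the $C$-major vertices, apply Theorem~\ref{thm:easyreroute} in the resulting graph, and then compare weights to conclude $P=L_1$. You are more explicit than the paper about verifying the hypotheses (that $G'$ is still a candidate, that $C$ remains a shortest long even hole there, and that lightest implies shortest), but the underlying argument is the same.
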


\begin{proof}
Let $P$ be the lightest $uv$-path in $G$ that contains no $C$-major vertices, and let $G'$ be the graph obtained from $G$
by deleting all $C$-major vertices. Thus $P$ is the lightest $uv$-path in $G'$. But $C$ is clean in $G'$,
and so by Theorem \ref{thm:easyreroute},
$P \cup L_2$ is a shortest long even hole. It cannot be lighter than $C$, and so $P$ is not lighter than $L_1$. 
On the other hand
$L_1$ is not lighter than $P$, since $P$ is the lightest $uv$-path in $G'$. Hence $P=L_1$. This proves Theorem \ref{thm:lexeasyreroute}.

\end{proof}

Now the main result of the section:

\begin{theorem}\label{thm:detectingCleanSLEH}
There is an algorithm with the following specifications:
\begin{description}
\item[Input:] A candidate $G$, and a linear ordering of $E(G)$.
\item[Output:] Decides either that $G$ has a long even hole  or that there is no clean lightest long even hole in $G$.
\item[Running time:] $\mathcal{O}(|G|^4)$.
\end{description}
\end{theorem}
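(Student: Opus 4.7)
The plan is to guess three evenly spaced vertices on the target hole and recover it by computing lightest paths between them, following the sketch in the introduction. The algorithm enumerates all triples $(v_1,v_2,v_3)$ of distinct vertices of $G$; for each triple it computes (after one-time precomputation) the lightest $v_iv_{i+1}$-path $L_i$ in $G$ for $i=1,2,3$ (indices modulo three), and then tests whether $L_1\cup L_2\cup L_3$ is a long even hole of $G$. If some triple produces a long even hole the algorithm reports that $G$ has a long even hole; otherwise it reports that there is no clean lightest long even hole in $G$.

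For correctness, the ``found a hole'' output is self-certifying. Conversely, suppose $G$ contains a clean lightest long even hole $C$; since $G$ is a candidate, $|E(C)|\ge 2\ell+2\ge 14$. I choose $v_1,v_2,v_3\in V(C)$ so that the three arcs of $C$ that they cut out have lengths lying in $\{\lfloor |E(C)|/3\rfloor,\lceil |E(C)|/3\rceil\}$; an elementary check shows that for every even integer $N\ge 14$, both $\lfloor N/3\rfloor$ and $\lceil N/3\rceil$ are at most $N/2-2$, so each chosen arc is the shorter of the two $v_iv_{i+1}$-arcs of $C$ (and in particular $v_i,v_{i+1}$ are distinct and nonadjacent). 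Theorem~\ref{thm:lexeasyreroute}, applied with $u=v_i$ and $v=v_{i+1}$, says this arc equals the lightest $v_iv_{i+1}$-path in $G$ avoiding $C$-major vertices; since $C$ is clean, it is simply the lightest $v_iv_{i+1}$-path in $G$, namely $L_i$. So on the triple $(v_1,v_2,v_3)$ the algorithm reconstructs $C$ and reports success.

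For the running time, one precomputes all-pairs lightest paths together with enough adjacency/neighborhood data (for example, for each pair $(u,v)$ a bitmask of the closed neighborhood of the interior of the lightest $uv$-path) in a total of $O(|G|^4)$ time. The algorithm then iterates over the $O(|G|^3)$ triples and, using that lightest paths are automatically induced, checks in $O(|G|)$ time per triple whether $L_1$, $L_2$, $L_3$ meet only at the three prescribed endpoints, admit no chord, and have total length even and at least $\ell$. The total running time is $O(|G|^4)$.

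The substantive content beyond Theorem~\ref{thm:lexeasyreroute} is just the arithmetic observation about partitioning an even $N\ge 14$ into three parts each at most $N/2-2$; the main thing to be careful about is arranging the precomputation so that the per-triple verification of inducedness and disjointness stays linear in $|G|$, which is the only reason to keep neighborhood data alongside the lightest-path table.
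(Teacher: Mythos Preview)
Your proposal is correct and is essentially the same argument as the paper's: guess three roughly equally spaced vertices on $C$, apply Theorem~\ref{thm:lexeasyreroute} to each arc (using cleanness to drop the ``no $C$-major vertex'' proviso), and verify in $O(|G|)$ per triple via precomputed neighbourhood data $N(uv)$ of each lightest path $Q(uv)$. The only cosmetic difference is that the paper phrases the existence of the three vertices via the inequality $|E(C)|\le 3(|E(C)|/2-2)$ rather than your explicit $\lfloor N/3\rfloor,\lceil N/3\rceil$ partition.
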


\begin{proof}
For all distinct $u,v\in V(G)$, compute  a lightest $uv$-path $Q(uv)=Q(vu)$, and compute the set $N(uv)$ of all vertices that belong to or have
a neighbour in $Q(uv)^*$. 
Enumerate all triples $(v_1, v_2, v_3)$ of distinct vertices in $G$, and check whether  
$$Q(v_1v_2)\cup Q(v_2v_3)\cup Q(v_3v_1)$$
is a long even hole, and if so, report this and stop. If all triples are examined without success, report that 
 $G$ contains no clean lightest long even hole. That concludes the description of the algorithm.

Each triple can be handled in time $\mathcal{O}(|G|)$ (by using the sets $N(uv)$), and so the total
running time is $\mathcal{O}(|G|^4)$. 

To prove correctness, let $C$ be a clean lightest long even hole in $G$; we must show that there is a triple $(v_1,v_2,v_3)$
for which the algorithm will find a long even hole. Since  $C$ has length at least $12$ and hence $|E(C)|\le 3(|E(C)|/2-2)$, there exist
$v_1,v_2,v_3 \in V(C)$ such that each pair of vertices in this triple is 
joined by a path of $C$ of length at most $|E(C)|/2-2$ that does not contain the third vertex in the triple.
By Theorem \ref{thm:lexeasyreroute} $Q(v_1v_2), Q(v_2v_3)$ and $Q(v_3v_1)$ are all paths of $C$ and they have union $C$.
This proves \ref{thm:detectingCleanSLEH}.

\end{proof}

\section{Cleaning a shortest long even hole}
Our method of cleaning is very much like that used for shortest long near-prisms,
and the next result is an analogue of Lemma \ref{distant}.
Let $C$ be a shortest long even hole in a candidate $G$.
For a $C$-major vertex $x$, we call a path $P$ of $C$ of length at least two a {\em $(C,x)$-gap} if both ends of $P$ are 
neighbours of $x$ and no interior vertex of $P$ is adjacent to $x$. Thus, adding $x$ to $P$ yields a hole.

We begin with:

\begin{lem}\label{twoingap}
Let $C$ be a shortest long even hole in $G$, and let $x,y$ be nonadjacent $C$-major vertices. Let $P$ be a $(C,x)$-gap of 
length at least $\ell-2$, with ends 
$p_1,p_2$. If $y$ has a neighbour in $V(P)$, then either
\begin{itemize}
\item for some $i\in \{1,2\}$, some neighbour $v$ of $y$ in $V(P)$ satisfies $d_P(p_i,v)\le \ell-5$;
or 
\item for some $i\in \{1,2\}$, $y$ is adjacent to a neighbour of $p_i$ in $C$; or
\item $y$ has exactly two neighbours in $V(P)$ and they are adjacent.
\end{itemize}
\end{lem}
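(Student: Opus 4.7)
The plan is to argue by contradiction: assuming all three conclusions fail, I will exhibit a long theta, a long jewel of order at most $\ell+1$, or a long even hole that contradicts either the candidate property of $G$ or the minimality of $C$. The three negations give: (a) every neighbour of $y$ in $V(P)$ has $P$-distance at least $\ell-4$ from both $p_1$ and $p_2$, so $|E(P)|\ge 2\ell-8$ and $y$ is nonadjacent to $p_1,p_2$; (b) $y$ is nonadjacent to the (at most four) $C$-neighbours of $p_1,p_2$; (c) $y$'s neighbour-set in $V(P)$ is not a pair of adjacent vertices. Let $v_1,v_2$ denote the $y$-neighbours in $V(P)$ closest to $p_1,p_2$ respectively (possibly equal); by (a) and extremality, $\pi_i:=x\dd p_i\dd P\dd v_i\dd y$ is induced of length at least $\ell-2$. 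By Lemma \ref{triad} applied to $x$, $x$ has a $V(P')^*$-neighbour $q$ at $P'$-distance at least $2$ from each of $p_1,p_2$ (where $P'$ is the complementary $p_1p_2$-path of $C$), and $y$ has three pairwise nonadjacent $V(C)$-neighbours.

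If $v_1=v_2=v$, the vertices $x,v$ are nonadjacent (since $v$ is interior to $P$ by (a), and $x$ has no neighbour in $P^*$ by the gap property). Lemma \ref{triad} provides two additional pairwise nonadjacent $y$-neighbours $w_1,w_2$ which, since $y$ has only $v$ in $V(P)$, must lie in $V(P')^*$. I form three internally-disjoint $xv$-paths: $\pi_a:=v\dd P\dd p_1\dd x$ and $\pi_b:=v\dd P\dd p_2\dd x$, each of length at least $\ell-3$; and $\pi_c:=v\dd y\dd w\dd P'\dd q\dd x$, where $w\in\{w_1,w_2\}$ is chosen on the same $P'$-side as $q$. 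Condition (b) forces $q,w$ to be at $P'$-distance at least $2$ from $p_1,p_2$, so the $P'$-subpath of $\pi_c$ avoids the $P'$-neighbours of $p_1,p_2$, preventing chord creation; and $V(P)\cap V(P')=\{p_1,p_2\}$ handles the rest of internal disjointness. Each pair of these three paths has total length at least $\ell$ (using $\ell\ge 6$), producing a long theta, a contradiction.

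If $v_1\ne v_2$, condition (c) allows me to choose $v_1,v_2$ nonadjacent, so $H:=\pi_1\cup\pi_2$ is an induced long hole of length at least $2\ell-4$. If $H$ is even, it either has length at most $2\ell$ (directly forbidden in a candidate) or, via the alternative induced cycle $H':=p_1\dd P\dd v_1\dd y\dd v_2\dd P\dd p_2\dd P'\dd p_1$ of length $|E(C)|+2-d_P(v_1,v_2)$, I obtain a long even hole strictly shorter than $C$. If $H$ is odd, then $\pi_1,\pi_2$ have opposite parities; when $y$ has a neighbour $w\in V(P')^*$, the path $x\dd q\dd P'\dd w\dd y$ (with appropriate extremal $w$) has length at least $2$ and interior anticomplete to $V(P)$ by (b), and combined with $\pi_1,\pi_2$ gives a long jewel of order at most $\ell+1$; when $y$ has no $V(P')^*$-neighbour, the third pairwise nonadjacent $y$-neighbour provided by Lemma \ref{triad} must lie strictly between $v_1$ and $v_2$ in $P$, and a parity calculation on a rerouted cycle through this middle neighbour yields a long even hole shorter than $C$.

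The main obstacle is the odd-$H$ subcase in which $y$ has no $V(P')^*$-neighbour: the rerouted cycle through the third $y$-neighbour must be shown induced, which requires controlling $y$'s interaction with the interior of $V(P)$ via the pairwise-nonadjacency from Lemma \ref{triad} together with (a). The parity and length bookkeeping required to bound the long-jewel order by $\ell+1$ is the other technical point, since it dictates how short the third $xy$-path may be while keeping its interior anticomplete to $\pi_1^*\cup\pi_2^*$.
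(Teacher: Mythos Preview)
Your plan has a genuine gap in the odd-$H$ subcase, and it stems from not having established the key parity fact that the paper uses at the very start.

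\textbf{The missing parity observation.} Since $P$ is a $(C,x)$-gap of length at least $\ell-2$, the hole $x\dd p_1\dd P\dd p_2\dd x$ is long and strictly shorter than $C$, hence odd; so $P$ is odd, and therefore the complementary $p_1p_2$-path $P'$ of $C$ is odd too. Once you know this, your ``$H$ even'' case is vacuous: $H$ is exactly the hole $x\dd p_1\dd P\dd v_1\dd y\dd v_2\dd P\dd p_2\dd x$, which is long (length at least $2(\ell-4)+4\ge \ell$) and strictly shorter than $C$ (since $|E(P')|\ge 3$ by Lemma~\ref{triad}), hence odd. Your alternative cycle $H'$ then differs from $H$ in length by $|E(P')|-2$, which is odd, so $H'$ is odd as well; your even-$H$ branch produces nothing.

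\textbf{The jewel argument does not bound the order.} In the odd-$H$ branch with $y$ having a neighbour in $V(P')^*$, you propose that $\pi_1,\pi_2$ together with an $xy$-path through $P'$ form a long jewel of order at most $\ell+1$. But the order of a jewel is $\max(|V(Q_1)|,|V(Q_2)|)$, and you have no upper bound on the lengths of $\pi_1,\pi_2$ (only the lower bound $\ell-2$), nor on the third path. None of the three paths can be forced to have at most $\ell+1$ vertices, so you cannot obtain a jewel of bounded order. This is the step that actually fails.

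\textbf{How the paper closes it.} The paper first records that $P$ and $P'$ are odd. It then lets $R$ be $P'$ with its first two and last two vertices deleted, and splits on whether $y$ has a neighbour in $V(R)$. If yes, take any induced $xy$-path $M$ with interior in $V(R)$; then $M$ together with the one or two induced $x$-to-$y$ (or $x$-to-$v$) paths through $P$ gives a long theta directly, with no jewel needed. If no, then by the failure of the first two bullets every $C$-neighbour of $y$ lies in $P^*$, so $y$ has two nonadjacent neighbours there; replacing the middle of $P$ by $y$ gives a $p_1p_2$-path $P''$ which is odd (same argument as for $P$), and then $P''\cup P'$ is a long even hole shorter than $C$. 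This last parity step is precisely what your ``rerouted cycle'' hand-wave is reaching for, but it only works after you know $P'$ is odd.

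Your Case~1 theta is morally the same as the paper's, but routing $\pi_c$ as $v\dd y\dd w\dd P'\dd q\dd x$ is delicate: both $x$ and $y$ may have several neighbours in $V(P')$, so you must take an induced $xy$-path with interior in $V(P')^*$ rather than a specific $w$ and $q$. The paper sidesteps this by working in $R$, which automatically keeps the path's interior away from the $C$-neighbours of $p_1,p_2$.
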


\begin{proof}
Let $Q$ be the $p_1p_2$-path of $C$ different from $P$. Thus $Q$ has length at least three.
The hole $x\dd p_1\dd P\dd p_2\dd x$ is long and
shorter than $C$, and so odd, and hence $P,Q$ are odd. 
Let $R$ be the graph obtained from $Q$ by deleting its first two and last two vertices. 
We may assume that the first two bullets of the theorem are false.
\\
\\
(1) {\em $x$ and $y$ have a neighbour in $V(R)$.}
\\
\\
By Lemma \ref{triad},
$x$ has a neighbour in $V(R)$.
Suppose that $y$ does not. Since the first two bullets of the theorem are false, 
it follows that all neighbours of
$y$ in $V(C)$ belong to $P^*$, and hence $y$ has two nonadjacent neighbours in $V(P)$. 
Let $P'$ be the induced $p_1p_2$-path with interior in $V(P)\cup \{y\}$ that contains $y$.
Since the first bullet of the theorem is false, it follows that $P'$ has length at least $2(\ell-4)+2\ge \ell$,
and so the hole $x\dd p_1\dd P'\dd p_2\dd x$ is long and shorter than $C$, and so odd. Hence $P'$ is 
odd; but $Q$ is also odd, and $P'\cup Q$ is a long even hole shorter than $C$, a contradiction.
This proves (1).

\bigskip

By (1), there is an induced $xy$-path with interior in $V(R)$, say $M$.
By hypothesis, $y$ has at least one neighbour in $V(P)$. If $y$ has only one neighbour
$v$ in $V(P)$, then there is a long theta formed by the two $xv$-paths with interior in $V(P)$ and $x\dd M\dd y\dd v$,
(because the first two paths both have length at least $\ell-3$, and the third has length at least three)
%L+2\ge \ell
a contradiction. If $y$ has two nonadjacent neighbours in $V(P)$, there is a long theta formed by the two
induced $xy$-paths with
interior in $V(P)$ and $M$,
again a contradiction. Hence $y$ has exactly two neighbours in $V(P)$ and they are adjacent.
This proves Lemma \ref{twoingap}.

\end{proof}

Let $C$ be a shortest long even hole. A {\em $C$-contrivance} is a six-tuple $(x,y,p_1,p_2,m,\mathcal{Q})$, where
\begin{itemize}
\item $x,y$ are $C$-major vertices (possibly $y=x$), and there is a $(C,x)$-gap $P$ with ends $p_1,p_2$ and midpoint $m$
such that every $C$-major vertex has a neighbour in $V(P)$;
\item $\mathcal{Q}$ is a set of paths of $C$, pairwise anticomplete;
\item every neighbour of $x$ or $y$ in $V(P)$ belongs to $\mathcal{Q}^*$; and 
\item $x,y$ and all $C$-major vertices nonadjacent to both $x,y$ have a neighbour in $\mathcal{Q}^*$.
\end{itemize}
Its {\em cost} is the number of vertices in $V(\mathcal{Q})$.

These objects will be the analogue of $(K,\mathcal{F})$-contrivances, and we will use them in the same way.
The next result is an analogue of Lemma \ref{contrivance}.

\begin{lem} \label{lem:Ccontrivance}
Let $G$ be a candidate and let $C$ be a shortest long even hole in $G$ such that for some $C$-major vertex $x$, there is 
a $(C,x)$-gap.
Then there is a $C$-contrivance with cost at most $4\ell-4$.
\end{lem}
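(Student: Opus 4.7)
The plan is to mirror the proof of Lemma \ref{contrivance}, using Lemma \ref{twoingap} in place of Lemma \ref{distant}. First, I would choose an extremal pair: among all $(x',P')$ with $x'$ a $C$-major vertex and $P'$ a $(C,x')$-gap, pick one maximizing $|E(P')|$, and set $(x,P)$ equal to this maximizer, with ends $p_1,p_2$ and midpoint $m$. The key covering property then says that every $C$-major vertex $z$ has a neighbour in $V(P)$: otherwise, the three pairwise non-adjacent neighbours of $z$ in $V(C)$ given by Lemma \ref{triad} would all lie in $V(C)\setminus V(P)$, so the $(C,z)$-gap containing $V(P)$ would have length strictly greater than $|E(P)|$, contradicting extremality.

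Next, I would construct initial paths $T_1,T_2$: for $i\in\{1,2\}$, let $T_i$ be the subpath of $C$ whose interior consists of every vertex of $V(P)$ at $P$-distance at most $\ell-5$ from $p_i$, together with the unique neighbour of $p_i$ in $V(C)\setminus V(P)$, so that $|V(T_i)|\le\ell$. By Lemma \ref{twoingap}, any $C$-major $z$ nonadjacent to $x$ either satisfies one of the first two bullets of that lemma (in which case $z$ has a neighbour in $T_1^*\cup T_2^*$ by construction), or has exactly two adjacent neighbours in $V(P)$ both at $P$-distance at least $\ell-4$ from each of $p_1,p_2$. If no vertex of the latter ``middle'' type exists, then $(x,x,p_1,p_2,m,\{T_1,T_2\})$ is already a $C$-contrivance of cost at most $2\ell$. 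Boundary cases where $|E(P)|$ or $|E(C)\setminus E(P)|$ is so small that $T_1,T_2$ fail to be anticomplete are handled by replacing them with a single subpath of $C$ containing $V(P)$ in its interior, still within the cost budget.

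Otherwise, take such a middle vertex to be $y$, with adjacent neighbours $u_1,u_2\in V(P)$, and append to $\mathcal{Q}$ a third subpath $T_3$ of $C$ of length roughly $2\ell-7$ whose interior contains the edge $u_1u_2$ together with a radius-$(\ell-4)$ neighbourhood on each side inside $V(P)$. The heart of the proof, and the main obstacle, is then to show that every further $C$-major vertex $z'$ nonadjacent to both $x$ and $y$ already has a neighbour in $T_1^*\cup T_2^*\cup T_3^*$. For a hypothetical uncovered $z'$, Lemma \ref{twoingap} forces two adjacent neighbours $v_1,v_2\in V(P)$ far from both the ends of $P$ and from $u_1,u_2$. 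Combining the edges $u_1u_2$ and $v_1v_2$, the subpath of $P$ between them, an induced $yz'$-path through $V(C)\setminus V(P)$, and induced $xy$- and $xz'$-paths using the two end arcs of $V(P)$ (exploiting the oddness of the hole $x\cup P$ established inside the proof of Lemma \ref{twoingap}, together with the evenness of $C$), I would exhibit one of the forbidden configurations --- a long theta, long near-prism, long ban-the-bomb, long jewel, or an even hole shorter than $C$ --- contradicting $G$ being a candidate. A final cost tally then yields $|V(\mathcal{Q})|\le 2\ell+(2\ell-6)=4\ell-6\le 4\ell-4$.
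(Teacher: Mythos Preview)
Your overall architecture is right: choose $(x,P)$ extremal, cover the ends of $P$ with $T_1,T_2$, then handle the ``middle'' vertices with a second major vertex $y$ and a further path. Your $T_1,T_2$ agree with the paper's $Q_1,Q_2$. The gap is in the last step.

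You take $y$ to be an arbitrary middle vertex and put $T_3$ around $u_1u_2$ \emph{inside} $V(P)$. Then for an uncovered $z'$ you know only that its two adjacent $P$-neighbours $v_1,v_2$ are far from $u_1u_2$; you have no control over the neighbours of $z'$ (or of $y$) in $V(C)\setminus V(P)$. Your sketched forbidden configuration needs three internally anticomplete $y$--$z'$ paths (or three constituent paths of a near-prism with bases $\{y,u_1,u_2\}$ and $\{z',v_1,v_2\}$). Two of them are clear: $u_2\hbox{-}P\hbox{-}v_1$ and an induced $yz'$-path through $V(C)\setminus V(P)$. But the third must leave $P$ on both sides, and the only way to close it up is either through $V(C)\setminus V(P)$ again (clashing with the second path) or through $x$ via $u_1\hbox{-}P\hbox{-}p_1\hbox{-}x\hbox{-}p_2\hbox{-}P\hbox{-}v_2$. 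The latter fails because $x$ has neighbours in $V(C)\setminus V(P)$ by Lemma~\ref{triad}, and nothing prevents one of these from lying in the interior of your second path. So the near-prism (or theta) is not induced, and the contradiction does not go through.

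The paper repairs exactly this by choosing $y$ extremally in a different sense: among all vertices of $S$, it maximizes $|E(P_y)\setminus E(P)|$, where $P_y$ is the $(C,y)$-gap containing $p_1$. One of the two extra paths, $Q_3$, is then placed around the end $p_3$ of $P_y$ \emph{outside} $V(P)$, and $Q_4$ around $p_4=u_1$. The extremality of $y$ forces any uncovered $z$ to have a neighbour in $R=p_1\hbox{-}P_y\hbox{-}p_3$, and then Lemma~\ref{twoingap} applied to $z$ and $P_y$ (not $P$) pins $z$'s two adjacent $P_y$-neighbours $r_1,r_2$ inside $R\subseteq V(C)\setminus V(P)$. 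Now a long near-prism with bases $\{y,p_4,p_5\}$ and $\{z,r_1,r_2\}$ can be built entirely from pieces of $C$, with $x$ playing no role, so the interference you ran into never arises. The missing idea in your proposal is this second application of Lemma~\ref{twoingap} to the gap $P_y$, together with the extremal choice of $y$ that makes it bite.
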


\begin{proof}
Choose a maximal
path $P$ of $C$ such that there is a $C$-major vertex $x$ for which $P$ is a $(C,x)$-gap. Let $P$ have ends $p_1,p_2$, and 
let $m$ be a midpoint of $P$. 
It follows that every $C$-major vertex has a neighbour in $V(P)$.
%If $P$ has length at most $2\ell$, let $Q$ be a path of $C$ of length $2\ell+2$ with $V(P)\subseteq Q^*$;
%2\ell+3
%then $(x,x,p_1,p_2,m,\{Q\})$ satisfies the theorem. So we may assume that $P$ has length at least $2\ell+1$.
For $i \in \{1,2 \}$ let $Q_i$ be the path of $C$ whose vertex set consists of all vertices of $V(P)$ with $P$-distance
at most $\ell-4$ from $p_i$ and the two vertices of $V(C)\setminus V(P)$ with $C$-distance at most two from $p_i$.
%2\ell-2

Let $S$ be the set of all $C$-major vertices with no neighbour in $Q_1^* \cup Q_2^* \cup \{x\}$. 
We may assume that $S \neq \emptyset$, because otherwise  $(x,x,p_1,p_2,m, \mathcal{Q})$ is a
$C$-contrivance satisfying the theorem, where $Q$ is the set of components of $G[V(Q_1\cup Q_2)]$.
Hence $Q_1,Q_2$ are vertex-disjoint.
For each $y \in S$, we define $P_y$ to be the $(C,y)$-gap with $p_1 \in P_y^*$.
Choose $y\in S$ with $|E(P_{y}) \setminus E(P)|$ maximum, and let $p_3, p_4$ be the ends of $P_{y}$, where $p_3\notin V(P)$. (By Lemma \ref{twoingap}, one end of $P_y$ is not in $V(P)$.)
Since $y$ has no neighbour in $Q_1^* \cup Q_2^* \cup \{x \}$ and $y$ has at least one
neighbour in $V(P)$, it follows from Lemma \ref{twoingap} that $y$ has exactly two neighbours in $V(P)$ and
they are adjacent. One of them is $p_4$; let the other be $p_5$. 

Let $R$ denote the path $p_1\dd P_{y}\dd p_3$.
For $i = 3,4$, let $Q_i$ be the path of $C$ whose vertex set consists of
all vertices of $V(P_{y})$ with $P_{y}$-distance at most $\ell-4$ from $p_i$ and the two vertices of
$V(C) \setminus V(P_{y})$ with $C$-distance at most two from $p_i$.
Then $p_5 \in Q_4^*$.
\begin{figure}[H]
\centering

\begin{tikzpicture}[scale=0.8,auto=left]
\tikzstyle{every node}=[inner sep=1.5pt, fill=black,circle,draw]

\def\w{3.5}
\def\r{2}
\def\s{1}
\def\t{.3}
\draw[domain=0:360,smooth,variable=\x,dashed] plot ({\r*cos(\x)},{\r*sin(\x)});
\node (x) at (0,0) {};
\node (p1) at ({\r*cos(90)},{\r*sin(90)}) {};
\node (p2) at ({\r*cos(270)},{\r*sin(270)}) {};
\node (y) at ({\s*cos(30)},{\s*sin(30)}) {};
\node (z) at ({\s*cos(150)},{\s*sin(150)}) {};
\node (p4) at ({\r*cos(35)},{\r*sin(35)}) {};
\node (p5) at ({\r*cos(25)},{\r*sin(25)}) {};
\node (r1) at ({\r*cos(155)},{\r*sin(155)}) {};
\node (r2) at ({\r*cos(145)},{\r*sin(145)}) {};
\node (p3) at ({\r*cos(180)},{\r*sin(180)}) {};
\node (p6) at ({\r*cos(0)},{\r*sin(0)}) {};

\draw (x) -- ({\t*cos(135)},{\t*sin(135)});
\draw (x) -- ({\t*cos(180)},{\t*sin(180)});
\draw (x) -- ({\t*cos(225)},{\t*sin(225)});
\draw (y) -- ({\s*cos(30)+\t*cos(220)},{\s*sin(30)+\t*sin(220)});
\draw (y) -- ({\s*cos(30)+\t*cos(260)},{\s*sin(30)+\t*sin(260)});
\draw (y) -- ({\s*cos(30)+\t*cos(300)},{\s*sin(30)+\t*sin(300)});
\draw (z) -- ({\s*cos(150)+\t*cos(320)},{\s*sin(150)+\t*sin(320)});
\draw (z) -- ({\s*cos(150)+\t*cos(280)},{\s*sin(150)+\t*sin(280)});
\draw (z) -- ({\s*cos(150)+\t*cos(240)},{\s*sin(150)+\t*sin(240)});

\foreach \to/\from in {y/p3,y/p4,y/p5,z/r1,z/r2,z/p6,x/p1,x/p2,p4/p5,r1/r2}
\draw [-] (\from) -- (\to);

\tikzstyle{every node}=[]
\draw (x) node [right]           {$x$};
\draw (y) node [above]           {$y$};
\draw (z) node [above]           {$z$};
\draw (p1) node [above]           {$p_1$};
\draw (p2) node [below]           {$p_2$};
\draw (p3) node [left]           {$p_3$};
\draw (p4) node [above right]           {$p_4$};
\draw (p5) node [right]           {$p_5$};
\draw (r1) node [left]           {$r_1$};
\draw (r2) node [above left]           {$r_2$};

\end{tikzpicture}
\caption{For Lemma \ref{lem:Ccontrivance}.} \label{fig:Ccontrivance}
\end{figure}
\noindent(1) {\em Every $C$-major vertex has a neighbour in $Q_1^* \cup Q_2^*\cup Q_3^*\cup Q_4^* \cup \{x,y \}$.}
\\
\\
Suppose that $z$ is $C$-major and has no neighbour in this set. Thus $z\ne x,y$.
Since $z$ has a neighbour in $V(R)$ from the choice of $y$, it follows from Lemma \ref{twoingap} applied to $z$ and $P_{y}$
that $z$ has exactly two neighbours in $V(P_{y})$ and they are adjacent, say $r_1, r_2$.
Since $z$
has a neighbour in $V(R)$ and $z$ is not adjacent to $p_1$, it follows that $r_1, r_2 \in V(R)$.
Number them such that $p_3,r_1,r_2,p_1$ are in order in $P_y$.
Since $z$ has a neighbour in $V(P)$, and no neighbour in $p_1\dd P\dd p_4$, there is a $zp_5$-path $M$ with interior in the vertex set
of $p_5\dd P\dd p_2$. But then there is a long prism with bases $\{y,p_4,p_5\}$, $\{z,r_1,r_2\}$, and constituent paths
$M$, 
$y\dd p_3\dd P_y\dd r_1$ and $r_2\dd P_y\dd p_4$, a contradiction.
This proves (1).

\bigskip

Let $\mathcal{Q}$ be the set of components of the subgraph induced on $V(Q_1)\cupcup V(Q_4)$.
From (1), it follows that $(x,y, p_1,p_2,m,\mathcal{Q})$ satisfies the theorem. This proves Lemma \ref{lem:Ccontrivance}.
\end{proof}

If we know a $C$-contrivance  $(x,y,p_1,p_2,m,\mathcal{Q})$ for some lightest long hole $C$ (but we do not know $C$),
it is possible to construct a set $X$ of vertices that contains
all $C$-major vertices and does not intersect $C$. To do so, we first need to reconstruct the path $P$ (in the notation above).
If we could do that, then since every $C$-major vertex has a neighbour in one of
$P^*$, $\mathcal{Q}^*$, and no vertex in $V(C)\setminus (V(P)\cup V(\mathcal{Q}))$ has such a neighbour, we would have the
desired set $X$. So, how to reconstruct $P$? As for long near-prisms, it is easier if $C$ is the lightest long even
hole, rather than just the shortest, and then we would like to use Theorem \ref{thm:lexeasyreroute} as the analogue of Lemma \ref{prismjump}. There is a slight
problem that did not arise for long near-prisms: the path $P$ we are trying to reconstruct
might have length more than $|E(C)|/2$ or close to that, and then
we cannot use Theorem \ref{thm:lexeasyreroute} directly. But if we know a midpoint $m$ of $P$, then $m$ divides
$P$ into two subpaths that are short enough to be reconstructed via Theorem \ref{thm:lexeasyreroute}. For that reason
we put the extra vertex $m$ in the definition of a $C$-contrivance.
We can now prove the main result of this section, an analogue of Lemmas \ref{firstpath}, \ref{reconstruct} and \ref{cleanprism}.

\begin{theorem} \label{alg:cleaningSLEH}
There is an algorithm with the following specifications:
\begin{description}
\item[Input:] A candidate $G$, and a linear ordering of $E(G)$.
\item[Output:] A list of $\mathcal{O}(|G|^{4\ell -1})$ sets with the following property: for every lightest long 
even hole $C$ there is some $X$ in the list such that $X$ contains all $C$-major vertices and $X \cap V(C) = \emptyset$.
\item[Running time:] $\mathcal{O}(|G|^{4\ell + 2})$
\end{description}
\end{theorem}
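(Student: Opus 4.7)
The plan mirrors the cleaning algorithm for long near-prisms (Lemmas \ref{contrivance}--\ref{cleanprism}): enumerate candidate $C$-contrivances, and for each one attempt to reconstruct the gap path $P$ via lightest-path computations, then read off a cleaning set. I would first enumerate all sextuples $(x,y,p_1,p_2,m,\mathcal{Q})$ in which $x,y,m\in V(G)$, $\mathcal{Q}$ is a set of pairwise anticomplete induced paths of $G$ of total cost at most $4\ell-4$, and $p_1,p_2\in\mathcal{Q}^{*}$. The number of such sextuples is $\mathcal{O}(|G|^{4\ell-1})$, since there are three free vertices ($x,y,m$), at most $4\ell-4$ vertices spanning $V(\mathcal{Q})$, and only $\mathcal{O}(1)$ choices of $p_1,p_2$ within $V(\mathcal{Q})$. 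I would also add the empty set to the output list to cover the case that the lightest long even hole $C$ is already clean.

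The crucial step is the reconstruction of $P$. The midpoint $m$ splits $P$ into two subpaths $P_1$ (from $p_1$ to $m$) and $P_2$ (from $m$ to $p_2$), each of length at most $\lceil |E(P)|/2\rceil$. By Lemma \ref{triad}, if $x$ is $C$-major then $x$ has a third pairwise-nonadjacent neighbour $v_3$ in $V(C)$ that is nonadjacent to both $p_1,p_2$; hence $v_3$ lies in the complementary arc $Q$ at $C$-distance at least $2$ from each of $p_1,p_2$. So $|E(Q)|\ge 4$, giving $|E(P)|\le |E(C)|-4$ and $|E(P_i)|\le |E(C)|/2-2$ for $i=1,2$. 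Theorem \ref{thm:lexeasyreroute} then identifies each $P_i$ as the unique lightest $p_i$-to-$m$ path in $G$ that avoids all $C$-major vertices.

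To realise this algorithmically without direct access to the $C$-major set, for each sextuple I would compute
\[
Z := \{\, v\in V(G)\setminus(V(\mathcal{Q})\cup\{m\}) : v \text{ is adjacent to } x \text{ or } y,\text{ or has a neighbour in } \mathcal{Q}^{*}\,\}
\]
and set $G_1:=G\setminus Z$. Condition (iv) of a $C$-contrivance ensures $Z$ contains every $C$-major vertex. Conversely, for the correct sextuple no vertex of $V(P)$ lies in $Z$: interior vertices of $P$ are not adjacent to $x$ (by the gap definition) nor to $y$ (condition (iii) forces every $y$-neighbour in $V(P)$ into $\mathcal{Q}^{*}\subseteq V(\mathcal{Q})$); and since every path of $\mathcal{Q}$ is a subpath of $C$ and $C$ is induced, no vertex of $V(C)\setminus V(\mathcal{Q})$ has a neighbour in $\mathcal{Q}^{*}$. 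Thus $V(P_1),V(P_2)\subseteq V(G_1)$, and the lightest $p_1m$-path and the lightest $mp_2$-path in $G_1$ equal $P_1$ and $P_2$ respectively. I would compute both lightest paths, set $P':=P'_1\cup P'_2$, and verify that $P'$ is an induced $p_1p_2$-path of $G$ (discarding the sextuple otherwise).

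Finally, take
\[
X := \{\, v\in V(G)\setminus(V(P')\cup V(\mathcal{Q})) : v \text{ has a neighbour in } (P')^{*}\cup \mathcal{Q}^{*}\,\}
\]
and add $X$ to the list. For the correct sextuple we have $P'=P$, and (a) every $C$-major $w$ belongs to $X$, because condition (i) supplies $w$ with a neighbour in $V(P)=P^{*}\cup\{p_1,p_2\}$, and $p_1,p_2\in\mathcal{Q}^{*}$ by (iii); (b) $X\cap V(C)=\emptyset$, because any $v\in V(C)\setminus(V(P)\cup V(\mathcal{Q}))$ with a neighbour in $P^{*}\cup\mathcal{Q}^{*}\subseteq V(C)$ would, by the induced-ness of $C$, be forced into $V(P)\cup V(\mathcal{Q})$, a contradiction. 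Lemma \ref{lem:Ccontrivance} supplies such a sextuple whenever $C$ has a major vertex. Each per-sextuple step (computing $Z$, the two lightest paths, and $X$) runs in $\mathcal{O}(|G|^{3})$ time, giving total running time $\mathcal{O}(|G|^{4\ell+2})$. The main obstacle is the reconstruction: one needs simultaneously the length bound $|E(P_i)|\le |E(C)|/2-2$ so that Theorem \ref{thm:lexeasyreroute} applies halfwise, and a deletion set $Z$ that both covers every $C$-major vertex and avoids $V(P)$; both rest on Lemma \ref{triad} together with the axioms of a $C$-contrivance.
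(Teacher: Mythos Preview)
Your reconstruction and cleaning-set arguments are correct and follow essentially the paper's strategy (with a slightly different but equally valid choice of output set $X$). However, there is one genuine gap. You invoke Lemma~\ref{lem:Ccontrivance} ``whenever $C$ has a major vertex,'' but that lemma carries the additional hypothesis that some $C$-major vertex $x$ admits a $(C,x)$-gap, equivalently, is not complete to $V(C)$. If every $C$-major vertex is complete to $V(C)$ --- and this can occur in a candidate: take a long even hole of length at least $2\ell+2$ together with a single vertex adjacent to all of it --- then no $C$-contrivance exists, and your fallback of outputting only the empty set fails to clean $C$.

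The paper covers this case by also outputting, for every induced three-vertex path $a\dd b\dd c$ of $G$, the set of all neighbours of $b$ other than $a$ and $c$. When $a\dd b\dd c$ is a subpath of $C$, this set contains every $C$-major vertex (each being complete to $V(C)$, hence adjacent to $b$) and is disjoint from $V(C)$ (since the only $C$-neighbours of $b$ are $a$ and $c$). Adding these $\mathcal{O}(|G|^3)$ extra sets to your list repairs the argument without affecting the stated running-time or list-size bounds.
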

\begin{proof}
First we output the set of all neighbours of $y$ different from $x,z$, for every
induced path $x\dd y\dd z$ in $G$. 

Now guess three vertices $x,y,m$ of $G$ and a set $\mathcal{Q}$ of induced paths of $G$, pairwise 
anticomplete, with cost at most $4\ell-4$; and guess $p_1,p_2 \in V(\mathcal{Q})$. If one of $x,y$ belongs to 
$V(\mathcal{Q})$
or has no neighbour in $\mathcal{Q}^*$, go on to the next guess.

Let $Z_1$ be the set of vertices in $V(G) \setminus V(\mathcal{Q})$ with a neighbour in $\mathcal{Q}^*$. Let $Z_2$
be the set of all vertices in $V(G)\setminus (V(\mathcal{Q})\cup \{m\})$ with a neighbour in $\{x,y\}$.
Let $G'=G \setminus (Z_1\cup Z_2)$, and let 
$R,S$ be the lightest $p_1m$-path and $p_2m$-path in
$G'$ respectively. (If these do not exist, or if
$R\cup S$ is not an induced path, go on to the next guess.)
Let $Z_3$ be the set of vertices in
$V(G) \setminus \mathcal{Q}^*$ with a neighbour in $\{x,y\}$ and a neighbour in the interior of $R\cup S$. 
Output $Z_1 \cup Z_3$, and go on to the next guess.
That completes the description of the algorithm.

There are $\mathcal{O}(|G|^{4\ell -1})$ guesses of $(x,y,p_1,p_2,m,\mathcal{Q})$ to check 
(because $p_1,p_2\in V(\mathcal{Q})$), and so the output list has size $\mathcal{O}(|G|^{4\ell -1})$. 
For each guess, we compute $Z_1,Z_2, Z_3$ in time $\mathcal{O}(|G|^3)$. Hence the 
total running time is $\mathcal{O}(|G|^{4\ell+2})$.

Now we prove the output is correct. 
Suppose that $C$ is a lightest long even hole in $G$.
If every $C$-major vertex is complete to $V(C)$, then the set $X$ satisfies our requirement, where 
$X$ is the set of all neighbours of $y$ different from $x,z$, for some three-vertex path $x\dd y\dd z$ of $C$.
So we may assume that some $C$-major vertex is not complete to $V(C)$.

By Lemma \ref{lem:Ccontrivance}, $G$ contains a $C$-contrivance $(x,y,p_1,p_2,m,\mathcal{Q})$ with cost at most
$4\ell-4$. We will show that when we guess this $C$-contrivance, we output the set $X$ that we need.
Let $P$ denote the $(C,x)$-gap with ends $p_1,p_2$ and with midpoint $m$. 
From the definition of $Z_1$, no vertex of $Z_1$ belongs to $V(C)$, since the paths in $\mathcal{Q}$ are paths of $C$.
It remains to show that $Z_3\cap V(C)=\emptyset$, and every $C$-major vertex belongs to $Z_1\cup Z_3$.

The path $C\setminus P^*$ contains all neighbours of $x$ in $V(C)$, and so by Lemma \ref{triad}, $C\setminus P^*$ has length at 
least four. Hence $|E(P)|\le |E(C)|-4$, and so
the paths $p_1\dd P\dd m$ and $m\dd P\dd p_2$ both have length
at most 
$\lceil |E(P)|/2\rceil \le |E(C)|/2-2$.
Moreover, $p_1\dd P\dd m$ is a path of $G'$, and so the algorithm will compute the lightest $p_1m$-path $R$ in $G'$,
since such a path exists. So $p_1\dd P\dd m$ is not lighter than $R$. But $R$ contains no $C$-major vertices of $G$, and
by Theorem \ref{thm:lexeasyreroute},
the path $p_1\dd P\dd m$ is the lightest $p_1m$-path of $G$ that contains no $C$-major vertices, so $R$ is not 
lighter than $p_1\dd P\dd m$. Consequently $R$ equals the path $p_1\dd P\dd m$. Similarly $S$ is the path 
$m\dd P\dd p_2$, and so $R\cup S=P$. Consequently $Z_3\cap V(C)=\emptyset$.

Now suppose that $z$ is a $C$-major vertex not in $Z_1$; we must show that $z\in Z_3$. Since every $C$-major vertex
that is nonadjacent to both $x,y$ has a neighbour in $\mathcal{Q}^*$, it follows that $z$ is adjacent to one of $x,y$.
Also $z$ has a neighbour in $V(P)$, since every $C$-major vertex has a neighbour in $V(P)$; and so $z$
has a neighbour in $P^*$. Thus $z\in Z_3$, as required.
This proves correctness, and so proves Theorem \ref{alg:cleaningSLEH}.

\end{proof}

\section{The main algorithm}

Now we prove our main result Theorem \ref{mainthm}, which we restate: 
\begin{theorem}\label{mainthm2}
For each even integer $\ell \geq 4$ there is an algorithm with the following specifications:
\begin{description}
\item[Input:] A graph $G$.
\item[Output:] Decides whether $G$ has an even hole of length at least $\ell$.
\item[Running time:] $\mathcal{O}(|G|^{9\ell+3})$.
\end{description}
\end{theorem}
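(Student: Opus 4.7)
The plan is to stitch together the ingredients developed in Sections 2--6 into a single decision procedure, with two distinct phases. Since $\ell = 4$ is immediately handled by Theorem \ref{alg:shortlongevenholes}, I assume $\ell \geq 6$. Phase one runs the tests for the easily-detectable configurations in the precise order demanded by their preconditions: Theorem \ref{alg:shortlongevenholes} with $k = 2\ell$, then Theorem \ref{alg:longjewels} with $k = \ell+1$, then Theorem \ref{alg:longtheta}, then Theorem \ref{alg:longbanthebomb} (whose input must be free of long thetas), and finally Theorem \ref{alg:longprisms} (whose input must be a prospect, i.e.\ free of all four prior configurations). If any of these tests succeeds I output that $G$ has a long even hole and halt; this is correct because each such configuration was already observed to contain a long even hole.

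If every test in phase one fails, then $G$ is a candidate in the sense of Section 5. Phase two fixes an arbitrary linear ordering of $E(G)$, calls Theorem \ref{alg:cleaningSLEH} to produce a cleaning list $\mathcal{L}$ of size $\mathcal{O}(|G|^{4\ell - 1})$, and for each $X \in \mathcal{L}$ applies Theorem \ref{thm:detectingCleanSLEH} to $G \setminus X$ equipped with the induced edge-ordering. If any such call reports a long even hole, output that $G$ has one; otherwise output that $G$ has none.

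For correctness the positive outputs are trivially valid. For the negative direction, assume $G$ contains a long even hole and that phase one did not succeed, so that $G$ is a candidate. Let $C$ be the lightest long even hole of $G$. Theorem \ref{alg:cleaningSLEH} guarantees some $X \in \mathcal{L}$ with $X \cap V(C) = \emptyset$ and $X$ containing every $C$-major vertex. Then $C$ survives in $G \setminus X$ and is clean there; moreover, since every hole of $G \setminus X$ is a hole of $G$ with identical edge weights under the induced ordering, $C$ is still the lightest long even hole of $G \setminus X$. As an induced subgraph of a candidate, $G \setminus X$ is itself a candidate, so Theorem \ref{thm:detectingCleanSLEH} applied to $G \setminus X$ must report a long even hole, and phase two correctly outputs yes.

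For the running time, phase one costs $\mathcal{O}(|G|^{2\ell}) + \mathcal{O}(|G|^{2\ell+3}) + \mathcal{O}(|G|^{2\ell-1}) + \mathcal{O}(|G|^{2\ell+1}) + \mathcal{O}(|G|^{9\ell+3})$, dominated by near-prism detection. Phase two builds the list in time $\mathcal{O}(|G|^{4\ell + 2})$ and then runs Theorem \ref{thm:detectingCleanSLEH} (time $\mathcal{O}(|G|^4)$) on each of $\mathcal{O}(|G|^{4\ell - 1})$ sets, for a total of $\mathcal{O}(|G|^{4\ell + 3})$. The near-prism step dominates everything else, giving the claimed $\mathcal{O}(|G|^{9\ell + 3})$ bound. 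The main obstacle, as foreshadowed in Section 3, is precisely that near-prism step: long near-prism detection is NP-complete on general graphs, so the whole pipeline only works because by the time we invoke Theorem \ref{alg:longprisms} the input has already been shown to be a prospect; the remaining work in this theorem is routine composition.
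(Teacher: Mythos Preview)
Your proposal is correct and follows essentially the same approach as the paper: run the five easily-detectable configuration tests in order to certify the graph is a candidate, then use the cleaning list from Theorem~\ref{alg:cleaningSLEH} together with Theorem~\ref{thm:detectingCleanSLEH} on each $G\setminus X$. Your correctness argument even spells out explicitly that $G\setminus X$ inherits candidacy and that $C$ remains the lightest long even hole there, points the paper leaves implicit; your $\mathcal{O}(|G|^{2\ell+3})$ for the long-jewel step (versus the paper's stated $\mathcal{O}(|G|^{2\ell+1})$) is immaterial since near-prism detection dominates.
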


\begin{proof}
The algorithm is as follows. At each step, if we find that $G$ contains a long even hole, we output that fact and stop, so
in steps 1,2,3,4,5,7 we can assume the algorithm called at that step outputs the negative answer.
Fix a linear ordering of $E(G)$.
\begin{enumerate}[\bf Step 1:]
\item Apply the algorithm of Theorem \ref{alg:shortlongevenholes} to test whether $G$ contains a long even hole of length 
at most $2\ell$ in time $\mathcal{O}(|G|^{2\ell})$. 
\item Apply the algorithm of Theorem \ref{alg:longjewels} to test 
whether $G$ contains a long jewel of order at most $\ell + 1$ in time $\mathcal{O}(|G|^{2\ell +1})$.
\item Apply the 
algorithm of Theorem \ref{alg:longtheta} to test whether $G$ contains a long theta in time $\mathcal{O}(|G|^{2\ell -1})$. 
\item Apply the algorithm of Theorem \ref{alg:longbanthebomb} to test whether $G$ contains a long ban-the-bomb, 
in time $\mathcal{O}(|G|^{2\ell + 1})$. (If we have not yet found a long even hole, then $G$ is a prospect.)
\item Apply the algorithm of 
Theorem \ref{alg:longprisms} to test whether $G$ contains a long near-prism, in time $\mathcal{O}(|G|^{9\ell+3})$. 
(If we have still not found a long even hole, then $G$ is a candidate.)
\item Apply the algorithm of Theorem \ref{alg:cleaningSLEH} to obtain a list $\mathcal{L}$ of subsets of $V(G)$
of length $\mathcal{O}(|G|^{4\ell-1})$ in time $\mathcal{O}(|G|^{4\ell+2})$,  with the property that for every lightest long even hole
$C$ of $G$ there exists $X\in \mathcal{L}$ with $X\cap V(C)=\emptyset$ that contains all $C$-major vertices.
\item For every $X\in \mathcal{L}$, apply
the algorithm of Theorem \ref{thm:detectingCleanSLEH} to $G\setminus X$, to decide that either $G\setminus X$ has a long even hole, 
or $G \setminus X$ has no clean lightest long even hole,
in time $\mathcal{O}(|G|^4)$ for each $X$, and so in time  $\mathcal{O}(|G|^{4\ell+3})$ altogether.
\item  Output that $G$ has no long even hole.
\end{enumerate}

For correctness, certainly if the algorithm returns that $G$ has a long even hole then that is true. For the converse,
suppose that $G$ has a long even hole, and hence a lightest long even hole $C$ say. Steps 1-5 will either output that there is a 
long even hole or decide that $G$ is a candidate, and we may assume the latter. Hence, with $\mathcal{L}$
is computed in step 6, there exists $X\in \mathcal{L}$ disjoint
from $V(C)$ and containing all $C$-major vertices. Then in step 7, since $C$ is a clean lightest long even hole of $G\setminus X$,
the algorithm of Theorem \ref{thm:detectingCleanSLEH} cannot report that  $G \setminus X$ has no clean lightest long even hole,
and so it will report that $G\setminus X$ has a long even hole, and we return this fact correctly.

For the running time, 
testing whether $G$ is a candidate (steps 1-5) takes time $\mathcal{O}(|G|^{9\ell+3})$, and determining whether the candidate $G$
contains a long even hole (steps 6-8) takes time $\mathcal{O}(|G|^{4\ell+3})$. Hence, the total running time is 
$\mathcal{O}(|G|^{9\ell+3})$. This proves Theorem \ref{mainthm2}.
\end{proof}

%\pagebreak

%\printbibliography

\end{document}